\def\d{{\rm{d}}}
\def\N{{\mathbb N}}
\def\R{{\mathbb R}}
\def\C{{\mathbb C}}
\def\P{{\mathbb P}}
\newtheorem{lemma}{Lemma}[section]
\newtheorem{theorem}[lemma]{Theorem}
\newtheorem{remark}[lemma]{Remark}
\newtheorem{prop}[lemma]{Proposition}
\newtheorem{coro}[lemma]{Corollary}
\newtheorem{definition}[lemma]{Definition}
\newtheorem{example}[lemma]{Example}
\numberwithin{equation}{section}
\newtheorem{lemmaA}{Lemma}[section]
\newtheorem{lemmaB}{Lemma}[section]
\newcommand{\enabstractname}{Abstract}
\begin{document}
\title[Continuity of Lyapunov exponents for SDEs]{Continuity of Lyapunov exponents for stochastic differential equations}
	
%%First author
\author{Zhenxin Liu}
\address{Z. Liu: School of Mathematical Sciences, Dalian University of Technology, Dalian 116024, P. R. China}
\email{zxliu@dlut.edu.cn}

%% Second author
\author{Lixin Zhang}
\address{L. Zhang (Corresponding author): School of Mathematical Sciences, Dalian University of Technology, Dalian 116024, P. R. China}
\email{lixinzhang8@hotmail.com; lixinzhang8@163.com}

%%%%%%%%%%%%%%%%%%%%%%%%%%%%%%%%%%%
%\date{\today}
\date{October 3, 2024}
\subjclass[2010]{34D08, 37H15, 60H10}
\keywords{Lyapunov exponent; (Lipschitz, H\"older) continuity of Lyapunov exponents; stochastic differential equation}

\begin{abstract}
For non-autonomous linear stochastic differential equations (SDEs), we establish that the top Lyapunov exponent is continuous if the coefficients ``almost" uniformly converge. For autonomous SDEs, assuming the existence of invariant measures and the convergence of coefficients and their derivatives in pointwise sense, we get the continuity of all Lyapunov exponents. Furthermore, we demonstrate that for autonomous SDEs with strict monotonicity condition, all Lyapunov exponents are Lipschitz continuous with respect to the coefficients under the $L^{p,p}$ norm ($p>2$).  Similarly, the H\"older continuity of Lyapunov exponents holds under weaker regularity conditions. It seems that the continuity of Lyapunov exponents has not been studied for SDEs so far, in spite that there are many results in this direction for discrete-time dynamical systems.
\end{abstract}

\maketitle

\section{Introduction}
The Lyapunov exponent plays a critical role for characterizing the stability/chaos of dynamical systems. It provides intuitive insight into how trajectories respond to variations in initial conditions. Specifically, positive and negative Lyapunov exponents correspond to exponential expansion and contraction behaviors of trajectories, respectively. Consequently, investigations into the variations of Lyapunov exponents hold great significance.

The Lyapunov exponent was first introduced in the doctoral dissertation by Lyapunov \cite{Lyapunov1992} at the end of the 19th century. Consider the following $d$-dimensional autonomous ordinary differential equation (ODE)
$$
\d X= A(X)\d t,\quad X(0)=x\in\R^{d},
$$
where $A:\R^{d}\rightarrow\R^{d}$ is a general function with some property. The Lyapunov exponent of solution $\left\{\Phi(x,t)\right\}_{t\geq 0}$ along the vector $v\in\R^{d}$ is defined as
$$
\lambda(x,v):=\lim_{t\rightarrow\infty}\dfrac{1}{t}\ln\vert D\Phi(x,t) v\vert,
$$
where $D\Phi(x,t)$ is the Jacobian matrix of solution  $\left\{\Phi(x,t)\right\}_{t\geq 0}$. Furstenberg and Kesten \cite{Furstenberg1960}, Oseledets \cite{Oseledets1968} proved that the limit exists for almost every point with respect to an invariant measure, and the equation can yield a Lyapunov spectrum. Pesin \cite{Pesin} and Ruelle \cite{Ruelle} established the stable manifold theorem. These efforts solidly laid the foundation for the study of Lyapunov exponents, and one of the important research directions that emerged is the investigation of the continuity of Lyapunov exponents. In 1983, Ma$\tilde{\mathrm{n}}$$\acute{\mathrm{e}}$ \cite{Mane1984} emphasized the importance of the continuity of Lyapunov exponents and proposed a conjecture regarding their discontinuity for $C^{1}$ area--preserving diffeomorphisms.
According to Ma$\tilde{\mathrm{n}}$$\acute{\mathrm{e}}$'s manuscript, Bochi \cite{Bochi2002} presented the proof for this conjecture in his doctoral thesis. Furthermore, Bochi and Viana \cite{Bochi2005} provided a sufficient and necessary condition for the continuity of Lyapunov exponents in higher dimensions.   Many other important results, including Avila and Viana \cite{Avila}, Buzzi, Crovisior and Sarig \cite{Buzzi} and Viana and Young \cite{Viana}, have significantly contributed to the advancement of this field.

In the realm of random dynamical systems, we mention some studies focusing on Lyapunov exponents. Baxendale \cite{Baxendale} investigated the connection between the Lyapunov exponents of a stochastic flow on a manifold and its measure-preserving properties. Bahnm\"uller and Bogensch\"utz \cite{Bahnmuller} derived a version of the Margulis-Ruelle inequality specifically tailored for stochastic flows. In \cite{Liao97} and \cite{Liao00}, Liao provided a formula for Lyapunov exponents of stochastic flows and studied the relationship of Lyapunov exponents with the decomposition he provided. Additionally, Ledrappier and Young \cite{Ledrappier} presented the entropy formula for random transformations. There are many other important works in this field, such as Blumenthal, Xue and Yang \cite{Blumenthal2}, Kifer \cite{Kifer} and Liu and Qian \cite{Liupeidong}. These works have provided a good foundation for investigating Lyapunov exponents in the context of stochastic flows and random mappings.

Many physical systems are influenced by random factors, including environmental noise and measurement errors. SDEs effectively capture this randomness, allowing for a more accurate description of system behavior; for example, systems such as thermodynamics, gas dynamics, and so on. The Lyapunov exponent of SDEs can be used for stability analysis and long-term behavior prediction of the systems. This allows us to better understand the random effects within the systems and make reasonable decisions.

In this paper, we investigate the continuity of Lyapunov exponents for SDEs defined on $\R^{d}$.  Firstly, we consider the relatively simple case: the  non-autonomous linear SDE
\begin{equation}\label{equ.1.2}
\d X=A(t)X\d t+B(t)X\d W,
\end{equation}
where $W$ is a $1$-dimensional Brownian motion and $A,B:\R\rightarrow\R^{d\times d}$ are continuous functions bounded by a positive constant $K$. Equation \eqref{equ.1.2}  generates a fundamental solution matrix $\Phi_{0,t}(\omega)$ by Arnold \cite[p. 129]{Arnold-equation}. The Lyapunov exponent along the  vector $v$ for $\Phi_{0,t}(\omega)$ is defined as
$$
\lambda(v,\omega):=\limsup_{t\rightarrow\infty}\dfrac{1}{t}\ln\vert \Phi_{0,t}(\omega) v\vert.
$$
Cong \cite{Cong} proved the existence of this limit and is independent of $\omega\in\Omega$ for any  $v\in\R^{d}$. However, he primarily focused on conditions ensuring the sign-preserving of the top Lyapunov exponent $\lambda_{1}$ when $\lambda_{1}>0$.  In our results, we extend this analysis to cover all cases where $\lambda_{1}\in(-\infty,+\infty)$. And the required condition is weaker than their uniform convergence condition of the coefficients with respect to $t$. Additionally, we provide two examples to illustrate that our condition is sufficient but not necessary, and discuss the challenges in identifying a condition that is both sufficient and necessary.

Next,  we consider the general $d$-dimensional autonomous SDEs
\begin{equation}\label{equ.1.3}
\d X=A(X)\d t+B(X)\d W,\quad X(0)=x\in\R^{d}
\end{equation}
and
\begin{equation}\label{equ.1.5}
\d X_k=A_{k}(X_k)\d t+B_{k}(X_k)\d W, \quad X_k(0)=x, \quad k\in\N^{+},
\end{equation}
where $W$ is a $1$-dimensional Brownian motion and $A,B,A_{k},B_{k}: \R^{d}\rightarrow\R^{d}$ are $C^{1}$ functions.
Following Kunita \cite[p. 209]{Kunita}, we denote the stochastic flow $\Phi_{0,t}(x,\omega),t\geq 0$ generated by equation~\eqref{equ.1.3}  as $\Phi(x,t,\omega)$. The Lyapunov exponent  along the  vector $v$ at point $x$ for $\Phi(x,t,\omega)$ is defined by the limit
$$
\lambda(x,\omega,v):=\lim_{t\rightarrow\infty}\dfrac{1}{t}\ln\vert D\Phi(x,t,\omega) v\vert.
$$
It was proved that under some invariant measure $\mu$, for almost all $x\in\R^{d}$, $\P$-almost all $\omega\in\Omega$ and any $v\in\R^{d}$, the Lyapunov exponent exists and is  independent of $\omega\in\Omega$, and the stochastic flow can yield a Lyapunov spectrum; see Carverhill \cite{Carverhill} or Liu and Qian \cite[p. 119]{Liupeidong} for details. In our paper, we aim to identify conditions on the coefficients of equations \eqref{equ.1.3} and \eqref{equ.1.5} such that the Lyapunov exponents are continuous. To achieve this, we utilize the subadditive ergodic theorem, which allows us to relate the \text{long-term} behavior of solutions to time-$1$ behavior, i.e. \begin{equation}\label{equ.1.4}
\begin{aligned}
\int_{\R^{d}}	\lambda_{1}(x)\d \mu=&\int_{\R^{d}\times \Omega}\lim_{t\rightarrow\infty}\dfrac{1}{t}\ln\Vert D\Phi(x,t,\omega)\Vert\d \mu\times \P\\=&\int_{\R^{d}\times\Omega}\ln\Vert D\Phi(x,1,\omega) \Vert\d \mu\times \P,
\end{aligned}
\end{equation}
where $\lambda_{1}(x)$ is the top Lyapunov exponent. The formula \eqref{equ.1.4} is commonly known as the Furstenberg formula (see \cite[p. 102]{Viana-B}). In \cite{Furstenberg-C}, Furstenberg and Kifer employed this formula to establish the continuity of the top Lyapunov exponent  for random matrix products under an irreducibility assumption. If measures are not irreducible, Bocker-Neto and Viana \cite{Bocker-Viana} proved that the Lyapunov exponents of locally constant $\text{GL}(2,\C)$-cocycles over Bernoulli
shifts exhibit continuous variation with respect to both the cocycles and the measures.
So our goal has become to investigate the continuity of formula \eqref{equ.1.4} concerning the coefficients for equations \eqref{equ.1.3} and \eqref{equ.1.5}.

In general, the continuity of Lyapunov exponents is studied separately with respect to  either cocycles or invariant measures; for example Bochi and Viana \cite{Bochi2005} and Avila, Eskin and Viana \cite{Avila-E}. However, for SDEs, we consider both aspects simultaneously and provide a condition that depends only on the coefficients of equations. Unlike dominated splittings and irreducibility conditions, which are crucial for ensuring the continuity of Lyapunov exponents for cocycles, it seems that our condition is easier to verify. Indeed, we establish that for equations \eqref{equ.1.3} and \eqref{equ.1.5} with $C^{1}$ coefficients, whose derivatives are bounded and admit the same modulus of continuity, if the invariant measures converge weakly and the coefficients converge pointwise, then all Lyapunov exponents are continuous.

The H$\ddot{\mathrm{o}}$lder continuity of Lyapunov exponents has been a significant topic of discussions in recent years, for example, Cai et al \cite{Cai-You} and Duarte and Klein \cite{Duarte-K}, among others. Their works generally involve establishing the H\"older continuity of Lyapunov exponents with respect to cocycles in the $C^{0}$ topology. In our work, we prove the Lipschitz continuity of Lyapunov exponents with respect to coefficients of equations, and not in the $C^{0}$ norm. Specifically, under the strict monotonicity condition, the Lyapunov exponents exhibit Lipschitz continuity concerning the coefficients under the $L^{p,p}$ norm ($p>2$). Additionally, we demonstrate that the
$\alpha$-H$\ddot{\mathrm{o}}$lder  continuity ($0<\alpha<1$) of them also holds under weaker regularity conditions.
Furthermore, we develop a local Lipschitz continuity theory for Lyapunov exponents with respect to the coefficients in the space $C^{1,1}(\R^{d})$.

This paper is organized as follows. In section \ref{sec.2}, we introduce some preliminaries  related to stochastic differential equations and random dynamical systems. In section \ref{sec.3}, we prove the continuity of the top Lyapunov exponent for the non-autonomous linear equation \eqref{equ.1.2}.  Moving forward, section \ref{sec.4} establishes the continuity of all Lyapunov exponents for equation \eqref{equ.1.3}. Finally, in section \ref{sec.5}, we demonstrate that Lyapunov exponents are Lipschitz continuous or H$\ddot{\mathrm{o}}$lder  continuous with respect to the coefficients in equation \eqref{equ.1.3}.

\section{Preliminaries}\label{sec.2}
Firstly, let us introduce several norms. Let $(\Omega,\mathcal{F},\P)$ be a probability  space.  For any vector $v=(a_{1},a_{2},\dots,a_{d})\in \R^{d}$, we denote the Euclidean norm by $\vert v\vert:=\sqrt{\sum_{i=1}^{d}\vert a_{i}\vert^{2}}$. Given a matrix  $A=(v_{1},v_{2},\dots,v_{d})\in \R^{d\times d}$, the matrix norm $\vert A\vert$  is defined as $\vert A\vert:=\max\limits_{1\leq i\leq d}\left\{\vert v_{i}\vert\right\}$.  Furthermore, the operator norm $\Vert A\Vert$ is given by
$$
\Vert A\Vert:=\sup_{\substack{v\in \R^{d} \\ v\neq 0}}\frac{\vert Av\vert}{\vert v\vert}.
$$
  It is noteworthy that for any $A\in \R^{d\times d}$,
\begin{equation}\label{equ.2.4}
C_{1}^{-1}\vert A\vert\leq\Vert A\Vert\leq C_{1}\vert A\vert,
\end{equation}
where $C_{1}$ is a positive constant. We use $C^{1}(\R^{d})$ to  denote the space of continuously differentiable functions on $\R^{d}$ and $C_{b}^{1}(\R^{d})$ the space of functions that are bounded and have bounded derivatives. For any $\psi\in C_{b}^{1}(\R^{d})$, the norm is given by
$$
\Vert\psi\Vert_{C^{1}}:=\max\left\{\sup_{x\in \R^{d}}\left\{\vert\psi(x)\vert\right\}, \sup_{x\in \R^{d}}\left\{\vert D\psi(x)\vert\right\}\right\}.
$$
A function $f$ on $\R^{d}$ is called $C^{1,\alpha}$ if it is $1$-th continuously differentiable and the $1$-th derivatives
are H$\ddot{\mathrm{o}}$lder continuous of order $0<\alpha\leq1$.

Consider the SDE defined on $\R^{d}$ as follows:
\begin{equation}\label{equ.2.1}
\d X=A(X,t)\d t+B(X,t)\d W,\quad X(0)=x\in \R^{d},
\end{equation}
where $W$ is a $1$-dimensional Brownian motion, and $A,B:\R^{d}\times[0,+\infty)\rightarrow\R^{d}$.
It is well-known that when $A,B$ are  Lipschitz and satisfy the linear growth condition,  \eqref{equ.2.1} admits a unique solution; see e.g. Arnold \cite[p. 105]{Arnold-equation}.

Kunita \cite[p. 209]{Kunita} introduced the concept of two-parameter stochastic flows,  which capture the ``time-variable'' orbital motion. In this context,  we provide the definition of two-parameter
stochastic flows on the probability space $(\Omega,\mathcal{F},\P)$ with the state space $\R^{d}$. As we know, the solution of SDE \eqref{equ.2.1} corresponds to a two-parameter stochastic flow.

\begin{definition}[Stochastic flow]\label{def.2.1}\rm
A {\em  two-parameter stochastic flow of homeomorphisms} on $\R^{d}$ is a
family of continuous maps $\left\{\Phi_{s,t}(\omega)|\omega\in\Omega, s,t\geq 0 \right\}$ which satisfies the following properties for any $\omega$ from a subset $\Omega'\subset\Omega$ of full $\P$-measure:
	\begin{enumerate}
\item[1.]
$\Phi_{s,t}(\omega)=\Phi_{u,t}(\omega)\circ\Phi_{s,u}(\omega)$ holds for all $s<u<t$,
\item[2.]
$\Phi_{s,s}(\omega)$ is the identity map for all $s\geq 0$,
\item[3.]
the map $\Phi_{s,t}(\omega):\R^{d}\rightarrow \R^{d}$ is a  homeomorphism for all $s<t$.
	\end{enumerate}
\end{definition}

In this paper, we mainly consider two kinds of SDEs.
To ensure a comprehensive understanding,  we introduce some preliminaries separately for non-autonomous linear SDEs and autonomous SDEs.
\subsection{Non-autonomous Linear SDEs}
Consider the following $d$-dimensional non-autonomous linear SDE:
\begin{equation}\label{equ.2.9}
\d X=A(t)X\d t+B(t)X\d W,
\end{equation}
where $W$ is a $1$-dimensional Brownian motion and $A,B:\R\rightarrow \R^{d\times d}$ are continuous matrix-valued functions bounded by a constant $K$. The SDE \eqref{equ.2.9} generates a two-parameter stochastic flow $\left\{\Phi_{s,t}(\omega)| \omega\in\Omega, 0\leq s\leq t \right\}$ on $\R^{d}$, which correspond to linear operators on $\R^{d}$.

Next, we introduce the \textit{Lyapunov spectrum} with respect to stochastic flow $\Phi_{s,t}(\omega)$ by Cong~\cite{Cong}.

\begin{prop}\label{lya-linear}
 Under the assumption that $A,B$ are continuous functions bounded by a constant $K$, there exists a constant $p\leq d$ such that for almost all $\omega$, there exist a sequence $\left\{E^{s}_{i}(\omega)\subset \R^{d}| i=1,\dots,p\right\}$ of linear subspaces satisfying the following structure:
\begin{center}
$\R^{d}=E^{s}_{1}(\omega)\oplus E^{s}_{2}(\omega)\oplus\cdots\oplus E^{s}_{p}(\omega)$
\end{center}
and numbers independent of $\omega\in\Omega$
$$
\lambda_{1}>\lambda_{2}>\dots>\lambda_{p},
$$
such that for any $v\in E^{s}_{i}(\omega)$,
\begin{equation}\label{def-lya}
\limsup\limits_{t\rightarrow\infty}\frac{1}{t}\ln\vert \Phi_{s,t}(\omega)v\vert=:	\lambda_{i},\qquad 1\leq i\leq p.
\end{equation}
Furthermore, integer $d_{i}=\text{\rm dim}E^{s}_{i}(\omega)$ is the multiplicity of $\lambda_{i}$. Among them, the set of
$(\lambda_{i},d_{i})_{i=1,2,\ldots,p}$ is called {\em Lyapunov spectrum} of stochastic flow $\Phi_{s,t}(\omega)$ and $\{E_{1}^{s}(\omega),E_{2}^{s}(\omega),$ $\dots,E_{p}^{s}(\omega)\}$ is called {\em Oseledets splitting}. These components are invariant with respect to the orbits:
$$
\lambda_{i}(\Phi_{s,t}(\omega)v)=\lambda_{i}(v),\quad \Phi_{s,t}(\omega)E_{i}^{s}(\omega)=E_{i}^{t}(\omega), \quad 0\leq s\leq t, \quad 1\leq i\leq p.
$$
\end{prop}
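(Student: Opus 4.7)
The plan is to reduce the claim to Oseledets' multiplicative ergodic theorem (MET) by identifying $\Phi_{s,t}(\omega)$ as a linear cocycle. Because $A$ and $B$ depend on $t$, the natural base dynamical system is the skew-product on $\R\times\Omega$ combining the deterministic time shift $s\mapsto s+t$ with the Brownian shift $\omega\mapsto\theta_t\omega$; the cocycle identity required for the MET is exactly the flow property $\Phi_{s,t}=\Phi_{u,t}\circ\Phi_{s,u}$ supplied by Definition~\ref{def.2.1}.

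The next step is to verify the integrability hypotheses. Applying It\^o's formula to $\tfrac12\log|\Phi_{s,t}(\omega)v|^2$ and using the uniform bounds $|A(t)|,|B(t)|\leq K$, one obtains a representation
$$
\log|\Phi_{s,t}(\omega)v|=\log|v|+\int_s^t\alpha(u,\omega)\,\d u+\int_s^t\beta(u,\omega)\,\d W_u,
$$
with $|\alpha|$, $|\beta|$ deterministically bounded in terms of $K$. This gives $\mathbb{E}[\log^+\|\Phi_{0,1}(\omega)\|]<\infty$, and a parallel Liouville-type estimate on $\det\Phi_{s,t}$ (using that $\Phi_{s,t}$ is a homeomorphism by Definition~\ref{def.2.1}) yields $\mathbb{E}[\log^+\|\Phi_{0,1}^{-1}(\omega)\|]<\infty$. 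Invoking the MET now produces, for almost every $\omega$, the splitting $\R^d=E_1^s(\omega)\oplus\cdots\oplus E_p^s(\omega)$ and numbers $\lambda_1>\cdots>\lambda_p$ realising \eqref{def-lya}; the invariance $\Phi_{s,t}(\omega)E_i^s(\omega)=E_i^t(\omega)$ is then immediate from the cocycle identity, and the $\omega$-independence of the $\lambda_i$ follows from the ergodicity of the Brownian shift together with Blumenthal's $0$--$1$ law applied to the tail events $\{\omega:\limsup_t\tfrac1t\log|\Phi_{0,t}(\omega)v|\leq\lambda\}$.

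The principal obstacle is the non-autonomous nature of the coefficients, which breaks the direct application of the standard (autonomous) MET: the natural invariant measure on the skew-product base is infinite along the $\R$-factor. I would handle this either by working directly with Kingman's subadditive ergodic theorem on Wiener space, where the subadditive family $g_{s,t}(\omega):=\log\|\Phi_{s,t}(\omega)\|$ satisfies the required $L^1$ bound via the It\^o estimate above and produces the top exponent $\lambda_1$ as an almost-sure deterministic limit, and then extracting the lower exponents inductively on exterior powers $\bigwedge^{k}\Phi_{s,t}$ (each of which is itself a linear cocycle with the same boundedness properties); or by restricting to coefficients admitting an almost-periodic or quasi-periodic structure so that time can be absorbed into a compact-torus factor. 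Either route converts the non-autonomous problem into a standard cocycle problem to which the Oseledets machinery applies, yielding the Lyapunov spectrum and Oseledets splitting claimed in the proposition.
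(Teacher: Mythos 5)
The paper itself does not prove Proposition~\ref{lya-linear}: it is a statement of Cong's theorem, cited from \cite{Cong} as background. So the comparison is not between two proofs, but between your sketch and the result it is supposed to deliver, and here there are genuine gaps.

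The principal gap is in your fall-back route through Kingman's subadditive ergodic theorem. Taking $g_n(\omega):=\log\Vert\Phi_{0,n}(\omega)\Vert$, Kingman requires the subadditivity $g_{n+m}(\omega)\leq g_n(\omega)+g_m(\theta_n\omega)$, which in turn needs the shift identity $\Phi_{n,n+m}(\omega)=\Phi_{0,m}(\theta_n\omega)$. That identity is exactly the cocycle relation over the Brownian shift and holds only when $A,B$ are time-independent. For the genuinely non-autonomous equation~\eqref{equ.2.9}, $\Phi_{n,n+m}$ solves an SDE whose coefficients at time $u\in[n,n+m]$ are $A(u),B(u)$, not $A(u-n),B(u-n)$, so the shift identity fails and the family $\{g_n\}$ is not a subadditive cocycle in Kingman's sense. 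This is also why the proposition (and the paper's remark following Proposition~\ref{lya-non}, item (4)) uses $\limsup$ rather than a limit: in the non-autonomous setting the limit can fail to exist even for smooth coefficients, whereas Kingman, if applicable, would produce a genuine limit. Your alternative of restricting to almost-periodic or quasi-periodic coefficients quietly strengthens the hypotheses and therefore proves a different statement than the one in the proposition, which assumes only continuity and boundedness.

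A second gap concerns the direct-sum decomposition. Classical Lyapunov theory for linear systems yields, $\omega$-by-$\omega$, only a flag $V_1\supset V_2\supset\cdots$ of subspaces collecting vectors with $\limsup$-exponent at most $\lambda_i$; upgrading this flag to the claimed splitting $\R^d=E_1^s(\omega)\oplus\cdots\oplus E_p^s(\omega)$ requires backward dynamics or an argument specific to Cong's construction, which your sketch does not supply. The ingredients you do have, namely the It\^o moment estimate for integrability (an analogue of Lemma~\ref{lem.3.1}) and a $0$--$1$ law argument for $\omega$-independence of the exponents, are correct in spirit; but the heart of the proposition is the construction of the Oseledets-type decomposition for a non-autonomous stochastic flow, and that is precisely the step the paper delegates to \cite{Cong} and your sketch does not fill.
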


\begin{remark}\rm	\begin{enumerate}
\item[1.]
While Lyapunov exponents of stochastic flow $\Phi_{s,t}(\omega)$ are deterministic and independent of $s$, the subspaces $E_{i}^{s}(\omega),1\leq i\leq p$ are generally random subspaces and depend on $s$.
\item[2.]   Lyapunov exponents obtained from continuous-time systems are equivalent to those obtained from discrete-time systems, as expressed by the equality:
$$
\limsup\limits_{t \rightarrow \infty}\dfrac{1}{t}\ln\vert\Phi_{0,t}(\omega)v\vert=\limsup\limits_{n \rightarrow \infty}\dfrac{1}{n}\ln\vert\Phi_{0,n}(\omega)v\vert, \quad t\in\R^{+},\quad n\in \N^{+},
$$
for almost all $\omega$ and any $v\in \R^{d}$.
\end{enumerate}
\end{remark}

\subsection{Autonomous SDEs}
Let us consider the $d$-dimensional autonomous SDE
\begin{equation}\label{equ.2.2}
\d X=A(X)\d t+B(X)\d W, \quad X(0)=x\in \R^{d},
\end{equation}
where $A,B: \R^{d}\rightarrow\R^{d}$ with properties to be specified below. Here, we denote the solution $\Phi_{0,t}(x,\omega)$ by $\Phi(x,t,\omega)$ or simply $\Phi(x,t)$. The following conclusion is provided by Kunita \cite[p. 218]{Kunita}.

\begin{prop}\label{lem.2.2}
Suppose that coefficients $A,B$ in SDE \eqref{equ.2.2} are $C^{1,\alpha}$ functions for some $0<\alpha\leq1$, and $\vert DA\vert,\vert DB\vert$ are bounded by a constant $L$. Then the solution $\Phi(x,t),t\geq 0$ is a $C^{1,\alpha}$ function of $x$ and for $1\leq l\leq d$ the derivative $\partial_{l}\Phi(x,t)=\frac{\partial \Phi(x,t)}{\partial x_{l}}$ satisfies the equation
\begin{equation*}%\label{equ.2.10}
\partial_{l} \Phi(x,t)=e_{l}+\int_{0}^{t}DA(\Phi(x,s))\partial_{l} \Phi(x,s)\d s+\int_{0}^{t}DB(\Phi(x,s))\partial_{l} \Phi(x,s)\d W \quad a.s.,
\end{equation*}
where $DA(x),DB(x)$ are the Jacobian matrices of $A(x),B(x)$ and $e_{l}$ is the unit vector $(0,\ldots,0,$ $1,0,\ldots,0)$ with $1$ as the $l$-th component.
\end{prop}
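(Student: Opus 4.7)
The approach is the standard Kunita-type argument combining moment estimates for the flow with Kolmogorov's continuity criterion. The plan has four main steps, with the last being the technical crux.

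First, I would establish moment estimates for the flow and construct the candidate derivative. For every $p\ge 2$, the Burkholder--Davis--Gundy inequality applied to the SDE satisfied by $\Phi(x,t)-\Phi(y,t)$, together with the Lipschitz bound $|DA|,|DB|\le L$ and Gronwall's inequality, yields
$$
E\bigl[|\Phi(x,t)-\Phi(y,t)|^{p}\bigr]\le C_{p}(t)|x-y|^{p},
$$
so by Kolmogorov's criterion (with $p>d$) the flow $\Phi(\cdot,t)$ admits a version that is a.s.\ continuous in $x$. I then introduce the candidate derivative $J_{l}(x,t)$ as the unique strong solution of the linear matrix SDE
$$
J_{l}(x,t)=e_{l}+\int_{0}^{t}DA(\Phi(x,s))J_{l}(x,s)\,\d s+\int_{0}^{t}DB(\Phi(x,s))J_{l}(x,s)\,\d W;
$$
existence, uniqueness, and $L^{p}$-bounds of all orders follow from standard linear SDE theory since the coefficient processes are progressively measurable and uniformly bounded by $L$.

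Second, I would show $L^{p}$-convergence of the difference quotients $Y^{h}_{l}(x,t):=h^{-1}[\Phi(x+he_{l},t)-\Phi(x,t)]$ to $J_{l}(x,t)$. The fundamental theorem of calculus produces an SDE for $Y^{h}_{l}$ with averaged coefficients $\tilde A^{h}(x,s)=\int_{0}^{1}DA(\theta\Phi(x+he_{l},s)+(1-\theta)\Phi(x,s))\,\d\theta$ and similarly $\tilde B^{h}$. The $\alpha$-H\"older continuity of $DA,DB$ combined with the first step gives $\sup_{s\le t}E[|\tilde A^{h}(x,s)-DA(\Phi(x,s))|^{p}]\le C|h|^{p\alpha}$, and another Gronwall argument forces $Y^{h}_{l}\to J_{l}$ in $L^{p}$, uniformly in $s\in[0,t]$.

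Third, to pass from $L^{p}$-convergence of difference quotients to almost-sure differentiability with a continuous derivative, and to reach the $C^{1,\alpha}$ conclusion, I would prove the moment estimate
$$
E\bigl[|J_{l}(x,t)-J_{l}(y,t)|^{p}\bigr]\le C_{p}(t)|x-y|^{p\alpha}.
$$
The forcing in the SDE for $J_{l}(x,\cdot)-J_{l}(y,\cdot)$ contains $[DA(\Phi(x,s))-DA(\Phi(y,s))]J_{l}(y,s)$; the $\alpha$-H\"older bound on $DA$ combined with the first-step estimate controls its $L^{p}$ norm by $C|x-y|^{\alpha}$, and a Gronwall closure yields the displayed inequality. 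Kolmogorov's criterion then provides a jointly continuous version of $(x,t)\mapsto J_{l}(x,t)$ that is $\alpha$-H\"older in $x$, and the $L^{p}$ limit of $Y^{h}_{l}$ identifies $J_{l}(x,t)$ with $\partial_{l}\Phi(x,t)$ almost surely, giving the claimed integral equation.

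The main obstacle is the third step, where H\"older exponents must be tracked carefully: the derivative process $J_{l}$ does not itself shrink as $y\to x$, so the only source of smallness in $|x-y|$ is the $\alpha$-H\"older dependence of $DA,DB$ composed with the flow, accessed through H\"older's inequality so as to distribute $p\alpha$-th moments of $\Phi(x,s)-\Phi(y,s)$ against higher moments of $J_{l}(y,s)$. This balancing act is precisely what fixes the final regularity class $C^{1,\alpha}$ of $\Phi(\cdot,t)$ and shows that $\alpha$ cannot be improved without stronger regularity of the coefficients.
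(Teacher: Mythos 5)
The paper does not prove this proposition at all; it is cited verbatim from Kunita's Saint-Flour notes (page 218), and your four-step plan is precisely Kunita's argument: flow moment estimates and Kolmogorov continuity, the variational SDE for the candidate derivative $J_l$, $L^p$-convergence of difference quotients via the averaged-coefficient device, and an $|x-y|^{p\alpha}$ moment bound for $J_l$ plus Kolmogorov again to get H\"older regularity of the derivative. Two small technical remarks worth flagging. First, Kolmogorov's criterion gives a version that is $\gamma$-H\"older for every $\gamma<\alpha-d/p$; letting $p\to\infty$ yields $C^{1,\gamma}$ for every $\gamma<\alpha$, but not quite the endpoint $C^{1,\alpha}$. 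This small loss is also present in Kunita's original theorem, so the paper's phrasing is itself slightly looser than what is actually established (nothing downstream in the paper uses the endpoint regularity, so this is harmless). Second, your final sentence identifies the $L^p$-limit of $Y^h_l$ with the a.s.\ pathwise derivative; to justify almost-sure differentiability at every $x$ simultaneously (rather than merely for fixed $x$), the cleanest route is to apply Kolmogorov's criterion to the two-parameter family $(x,h)\mapsto Y^h_l(x,\cdot)$, with $Y^0_l:=J_l$, which you have the moment estimates to do. These are routine fillings; the outline is otherwise complete and matches the cited source.
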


Next, we introduce the definition of invariant measures for equation \eqref{equ.2.2} which play a crucial role in establishing the existence of Lyapunov exponents.
\begin{definition}[Invariant measure]\rm
A Borel probability measure on $\mu$ on $\R^{d}$ is called an {\em invariant measure} or {\em stationary distribution} of Markov process $\left\{\Phi(x,t)\right\}_{t\geq 0}$ if  $P_{t}^{\ast}\mu=\mu$ for all $t\in\R^{+}$, where $\left\{P_{t}^{\ast}\right\}_{t\geq0}$ is the Markov semigroup generated by $\left\{\Phi(x,t)\right\}_{t\geq 0}$.
\end{definition}

In this paper, we assume that equation \eqref{equ.2.2} possesses an invariant measure, denoted by $\mu$. Additionally, consider a family of $d$-dimensional autonomous SDEs
$$
\d X=A_{k}(X)\d t+B_{k}(X)\d W,\quad k\in\N^{+},
$$
where $A_{k},B_{k}: \R^{d}\rightarrow\R^{d}$. We denote the invariant measures corresponding to these SDEs by $\mu_{k},k\in\N^{+}$.

As is well-known, $\mu_{k}$ converges to $\mu$ under the $\text{weak}^{\ast}$-topology if for any $\phi\in C_{b}(\R^{d})$,
$$
\lim_{k\rightarrow\infty}\int_{\R^{d}}\phi(x)\d\mu_{k}=\int_{\R^{d}}\phi(x)\d\mu,
$$
where $C_{b}(\R^{d})$ is the set of all continuous bounded functions defined on $\R^{d}$.

For solution $\Phi(x,t),t\geq 0$ with invariant measure $\mu$,  the multiplicative ergodic theorem, as stated by Liu and Qian \cite[p. 119]{Liupeidong}, provides insights into its Lyapunov exponents.
\begin{prop}\label{lya-non}
If solution $\left\{\Phi(x,t,\omega)\right\}_{t\geq 0}$ has the following property
\begin{equation}\label{equ.2.3}
\int_{\R^{d}\times\Omega}\log^{+}\max\left\{\Vert D\Phi(x,1,\omega)\Vert,\Vert D\Phi^{-1}(x,1,\omega)\Vert\right\}\d\mu\times\P< +\infty,
\end{equation}
then
there exists a subset $\Lambda_{0}\subset \R^{d}\times\Omega$ with $\mu\times\P(\Lambda_{0})=1$ such that for any $(x,\omega)\in\Lambda_{0}$, there exists a sequence $\left\{ E^{i}(x,\omega)\right\}_{1\leq i\leq d}$ of linear subspaces of $\R^{d}$:
$$
\R^{d}=	E^{1}(x,\omega)\oplus\cdots\oplus E^{d}(x,\omega)
$$
and numbers independent of $\omega\in\Omega$
$$
\lambda_{1}(x)\geq\lambda_{2}(x)\geq\cdots\geq\lambda_{d}(x),
$$
such that for any $v\in E^{i}(x,\omega)$
$$	
\lim\limits_{t \rightarrow \infty}\dfrac{1}{t}\ln\vert D\Phi(x,t,\omega)v\vert=:\lambda_{i}(x),\quad 1\leq i\leq d.
$$
\end{prop}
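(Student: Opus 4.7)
The plan is to reduce the statement to the classical multiplicative ergodic theorem (MET) of Oseledets by extracting a discrete-time linear cocycle from the derivative flow. First I would introduce the skew-product transformation $F:\R^{d}\times\Omega\to\R^{d}\times\Omega$ defined by
$$F(x,\omega)=\bigl(\Phi(x,1,\omega),\,\theta_{1}\omega\bigr),$$
where $\{\theta_{t}\}_{t\geq 0}$ is the Wiener shift on the driving noise. Because $\mu$ is invariant for the Markov semigroup generated by $\Phi$ and $\theta_{1}$ preserves $\P$, a direct disintegration argument (conditioning on $\mathcal{F}_{1}$ and using the Markov property) yields that $\mu\times\P$ is $F$-invariant. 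Via the chain rule the map $(x,\omega)\mapsto D\Phi(x,1,\omega)$ extends to a multiplicative cocycle $\mathcal{A}(n,x,\omega):=D\Phi(x,n,\omega)$ over $F$:
$$\mathcal{A}(n+m,x,\omega)=\mathcal{A}\bigl(m,F^{n}(x,\omega)\bigr)\cdot\mathcal{A}(n,x,\omega),$$
which encodes all infinitesimal behaviour of the stochastic flow at integer times.

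Next I would verify the integrability hypothesis of the two-sided Oseledets MET. The assumed bound says exactly that $\log^{+}\Vert\mathcal{A}(\pm 1,\cdot,\cdot)\Vert\in L^{1}(\mu\times\P)$, which is the precise input for the invertible MET (see, e.g., Arnold's monograph). Applying that theorem to the triple $(F,\mathcal{A},\mu\times\P)$ produces a set $\Lambda_{0}^{\mathrm{dis}}\subset\R^{d}\times\Omega$ of full $\mu\times\P$-measure on which one obtains an $F$-invariant splitting $\R^{d}=E^{1}(x,\omega)\oplus\cdots\oplus E^{d}(x,\omega)$ together with numbers $\lambda_{1}(x)\geq\cdots\geq\lambda_{d}(x)$ satisfying
$$\lim_{n\to\infty}\tfrac{1}{n}\ln\vert D\Phi(x,n,\omega)v\vert=\lambda_{i}(x)\quad\text{for }v\in E^{i}(x,\omega)\setminus\{0\}.$$
The $\omega$-independence of the $\lambda_{i}(x)$ follows from the usual observation that each $\lambda_{i}$ is $F$-invariant, hence a function only of the $x$-coordinate on a full-measure set, because the Wiener shift is ergodic in $\omega$.

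To promote the discrete-time limits to genuine continuous-time limits, I would use Proposition~\ref{lem.2.2} to control the oscillation of $\ln\Vert D\Phi(x,t,\omega)\Vert$ on the unit interval $[n,n+1]$. A standard Borel--Cantelli argument, using the moment bounds implicit in the integrability hypothesis together with the SDE satisfied by $D\Phi$, shows that
$$\frac{1}{n}\sup_{t\in[n,n+1]}\bigl\vert\ln\Vert D\Phi(x,t,\omega)\Vert-\ln\Vert D\Phi(x,n,\omega)\Vert\bigr\vert\longrightarrow 0\quad\text{a.s.}$$
Intersecting the exceptional set with $\Lambda_{0}^{\mathrm{dis}}$ yields the desired $\Lambda_{0}$ with $\mu\times\P(\Lambda_{0})=1$, and on $\Lambda_{0}$ the discrete-time exponents along $E^{i}(x,\omega)$ agree with their continuous-time counterparts.

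The main obstacle in this program is the probabilistic part: verifying that $\mu\times\P$ is $F$-invariant and that $\mathcal{A}$ is a well-defined, jointly measurable cocycle on the product space. This is not a deep fact, but it requires a careful setup of the canonical Wiener shift together with the Markov property in order to bridge the analytic SDE formulation and the geometric cocycle framework required by Oseledets. Once this linkage is in place, the rest of the argument is a direct invocation of the MET followed by the short analytic extension from integer to real times.
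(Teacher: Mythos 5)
The paper does not supply a proof of Proposition~\ref{lya-non}; it is cited directly from Liu and Qian's monograph (and Carverhill), so there is no ``paper's proof'' to compare against. Your sketch reproduces the standard proof in that literature: discretize, view $D\Phi(\cdot,1,\cdot)$ as a linear cocycle over the skew product $F(x,\omega)=(\Phi(x,1,\omega),\theta_{1}\omega)$ with the $F$-invariant measure $\mu\times\P$, apply the two-sided Oseledets theorem using \eqref{equ.2.3}, and then interpolate back to continuous time. That is indeed the right route, and the cocycle/skew-product reduction is carried out correctly.

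Two steps are treated too casually, and one is an outright non sequitur as written. First, the claim that the exponents are independent of $\omega$ ``because the Wiener shift is ergodic in $\omega$'' does not follow: what Oseledets' theorem gives you is an \emph{$F$-invariant} function $\lambda_{i}(x,\omega)$ on $\R^{d}\times\Omega$, not a $\theta$-invariant function of $\omega$ alone, and ergodicity of $\theta$ says nothing about $F$-invariant functions being constant in the $\omega$-slot. The genuine argument (as in Carverhill) uses the Markov property of the flow together with a $0$--$1$ law type reasoning: for fixed $x$ the exponent is measurable with respect to a ``future'' filtration and is invariant under the corresponding shift, forcing it to be $\P$-a.s.\ constant; this needs to be spelled out, not deduced from ergodicity of the Wiener shift. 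Second, the passage from integer to real times via Borel--Cantelli cannot rely on ``moment bounds implicit in the integrability hypothesis'': condition \eqref{equ.2.3} is only a $\log^{+}$-integrability statement at time $1$ and carries no moment information about $\sup_{t\in[n,n+1]}\Vert D\Phi(x,t,\omega)\Vert$. Those bounds come from the SDE for $D\Phi$ under the regularity assumed in Proposition~\ref{lem.2.2} (and quantified in Lemma~\ref{lem.4.2}), i.e.\ from $C^{1}$ coefficients with bounded derivatives; you should invoke that explicitly rather than attribute it to \eqref{equ.2.3}. With these two repairs the proposal matches the cited proof.
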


\begin{remark} \rm
\begin{enumerate}
\item[(1)] If the coefficients of equation \eqref{equ.2.2} are $C^{1}$ functions and their derivatives are bounded, then the Jacobian matrix $D\Phi(x,1,\omega)$  for the equation satisfies the condition \eqref{equ.2.3} by Lemma \ref{lem.4.2} below.
\item[(2)]	If $\mu$ is an ergodic measure, then $ \lambda_{i}(x)$  is independent of $x$ for any $1\leq i\leq d$.
\item[(3)]  The Lyapunov exponents obtained from continuous-time systems are equivalent to those obtained from discrete-time systems, as expressed by the equality:
$$
\lim\limits_{t \rightarrow \infty}\dfrac{1}{t}\ln\vert\Phi(x,t,\omega)v\vert=\lim\limits_{n \rightarrow \infty}\dfrac{1}{n}\ln\vert\Phi(x,n,\omega)v\vert,\quad t\in\R^{+},\quad n\in \N^{+},
$$
for almost all $\omega$ and any $v\in \R^{d}$.
\item[(4)] Note that the Lyapunov exponent is defined by upper limit in Proposition \ref{lya-linear} for {\em non-autonomous} linear equations, instead of by limit in Proposition \ref{lya-non} for autonomous equations, because the limit may not exist in non-autonomous case even when the coefficients are smooth; see \cite{CLZ,CLZ-pre} for details.
\end{enumerate}
\end{remark}

Let $\wedge^{l}\R^{d}$ be the $l$-fold exterior power of $\R^{d}$, where $1\leq l\leq d$.  If $\left\{v_{j}:j=1,\dots,d\right\}$ is a basis of $\R^{d}$, then $\left\{v_{j_{1}}\wedge\dots\wedge v_{j_{l}}:1\leq j_{1}<\dots< j_{l}\leq d\right\}$ is a basis of $\wedge^{l}\R^{d}$. For any $v_{1}\wedge\dots\wedge v_{l}\in \wedge^{l}\R^{d}$, we define norm $\Vert v_{1}\wedge\dots\wedge v_{l}\Vert$ as the volume  $\text{vol}(v_{1},\dots,v_{l})$ of the corresponding parallelepiped. Suppose $A:\R^{d}\rightarrow\R^{d}$  is a linear operator. We can extend $A$ to act on the exterior power $\Lambda^{l}\R^{d}$ as follows:
$$
\wedge^{l}A(v_{1}\wedge\dots\wedge v_{l}):=Av_{1}\wedge\dots\wedge Av_{l}
$$
defines a linear operator $\wedge^{l}A$ on $\wedge^{l}\R^{d}$. Further, for any two linear operators $A,B:\R^{d}\rightarrow\R^{d}$, we have
\begin{equation}\label{equ.2.6}
\begin{aligned}
&\wedge^{l}(AB)=(\wedge^{l}A)(\wedge^{l}B), \qquad\qquad(\wedge^{l} A)^{-1}=\wedge^{l}A^{-1},\\
&\qquad\Vert\wedge^{l}A\Vert\leq\Vert A\Vert^{l}, \qquad\qquad\Vert\wedge^{l}AB\Vert\leq \Vert\wedge^{l}A\Vert\Vert\wedge^{l}B\Vert.
\end{aligned}
\end{equation}
Let us explicitly note here that
\begin{equation}\label{equ.2.5}
\sum_{i=1}^{l}\lambda_{i}(x)=\lim_{t\rightarrow\infty}\dfrac{1}{t}\ln\Vert\wedge^{l}D\Phi(x,t,\omega)\Vert \qquad \P-a.s.,
\end{equation}
as it is recorded in Viana \cite[p. 59]{Viana-B}.

\begin{prop}[Subadditive ergodic theorem; see Kingman \cite{Kingman}]\label{pro.2.1}
Let $(U,\mathcal{B},\nu)$ be a probability space and $T$ a measure-preserving transformation on $(U,\mathcal{B},\nu)$. Let $\left\{g_{n}\right\}^{+\infty}_{n=1}$ be a sequence of measurable functions $g_{n}:U\rightarrow\R\cup\left\{-\infty\right\}$ satisfying the conditions:
\begin{itemize}
\item  Integrability: $g_{1}^{+}\in L^{1}(U,\mathcal{B},\nu)$;
\item Subadditivity: $g_{m+n}\leq g_{m}+g_{n}\circ T^{m}\quad \nu-a.e.,\quad \text{ for all } m,n\geq 1.$
\end{itemize}
Then there exists a measurable function $g: U\rightarrow\R\cup\left\{-\infty\right\}$ such that
$$
g^{+}\in L^{1}(U,\mathcal{B},\nu),\quad g\circ T=g \quad \nu-a.e.,\quad \lim_{n\rightarrow+\infty}\dfrac{1}{n}g_{n}=g\quad\nu-a.e.
$$
and
$$
\lim_{n\rightarrow+\infty}\dfrac{1}{n}\int g_{n}\d\nu=\inf_{n}\dfrac{1}{n}\int g_{n}\d\nu=\int g\d\nu.
$$
\end{prop}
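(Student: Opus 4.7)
The plan is to prove Kingman's theorem via the now-standard ``good-time'' tiling argument, combined with Fekete's lemma on the integral side. First, $T$-invariance of $\nu$ lets us integrate subadditivity to get $a_{m+n} \leq a_m + a_n$ for $a_n := \int g_n \, \d\nu$; Fekete's lemma then gives
\[
\gamma := \lim_{n \to \infty} \frac{a_n}{n} = \inf_n \frac{a_n}{n} \in [-\infty, +\infty).
\]
Setting $\bar{g}(x) := \limsup_n g_n(x)/n$ and $\underline{g}(x) := \liminf_n g_n(x)/n$, the inequality $g_{n+1} \leq g_1 + g_n \circ T$ combined with $g_1^+ \in L^1(\nu)$ forces $\bar{g} \circ T = \bar{g}$ and $\underline{g} \circ T = \underline{g}$ $\nu$-a.e., and the iterated bound $g_n \leq \sum_{k=0}^{n-1} g_1 \circ T^k$ together with Birkhoff's ergodic theorem yields $\bar{g} \leq \mathbb{E}[g_1 \mid \cal I]$ $\nu$-a.e., where $\cal I$ is the $T$-invariant $\sigma$-algebra; in particular $\bar{g}^+, \underline{g}^+ \in L^1(\nu)$.

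The easy half $\int \underline{g} \, \d\nu \leq \gamma$ follows by fixing $n$, applying iterated subadditivity along the $T^n$-orbit ($g_{kn} \leq \sum_{j=0}^{k-1} g_n \circ T^{jn}$), and using Birkhoff for the $\nu$-preserving map $T^n$ to conclude $\underline{g} \leq \mathbb{E}[g_n \mid \cal I_{T^n}]/n$ $\nu$-a.e.; integration and taking the infimum over $n$ gives the claim. For the matching lower bound, I would use the tiling construction: fix $\epsilon > 0$ and $M > 0$, set $\underline{g}_M := \max(\underline{g}, -M)$, and for each $x$ define the first good time
\[
N(x) := \min\{n \geq 1 : g_n(x) \leq n(\underline{g}_M(x) + \epsilon)\},
\]
finite $\nu$-a.e.\ by definition of $\liminf$. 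Choose $L$ with $\nu(\{N > L\}) < \delta$, and greedily tile each orbit segment $\{x, Tx, \ldots, T^{n-1}x\}$ into ``good'' blocks of length $N(T^j x) \leq L$ (invoking $g_{N(T^j x)}(T^j x) \leq N(T^j x)(\underline{g}_M(x) + \epsilon)$ via $T$-invariance of $\underline{g}_M$) and single steps at the bad indices, absorbing $g_1$. Block-by-block subadditivity and integration against $\nu$ then yield
\[
\frac{a_n}{n} \leq \int \underline{g}_M \, \d\nu + \epsilon + C\delta + \frac{CL}{n}
\]
with $C$ depending on $\int g_1^+ \, \d\nu$ and $M$. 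Sending $n \to \infty$, then $\delta \to 0$, $\epsilon \to 0$, and $M \to \infty$ (monotone convergence $\underline{g}_M \downarrow \underline{g}$) gives $\gamma \leq \int \underline{g} \, \d\nu$, so $\int \underline{g} = \gamma$.

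To conclude $\bar{g} = \underline{g}$ $\nu$-a.e.\ (and hence obtain the pointwise limit $g$), I would pass to the ergodic decomposition so both limits reduce to constants on each component, then deduce the matching bound $\int \bar{g} \, \d\nu \leq \gamma$ by a second tiling argument applied to the truncated sequence $g_n/n \wedge K$, paired with a reverse-Fatou comparison against the Birkhoff majorant $\frac{1}{n}\sum g_1 \circ T^k$. Combined with the Fatou inequality $\int \bar{g} \geq \gamma$ (obtained by applying Fatou to the non-negative sequence $h_n/n := (\sum_{k=0}^{n-1} g_1 \circ T^k - g_n)/n$, using $\liminf h_n/n = \mathbb{E}[g_1\mid\cal I] - \bar{g}$) and the pointwise $\bar{g} \geq \underline{g}$, this forces $\bar{g} = \underline{g}$ $\nu$-a.e. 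Setting $g := \bar{g} = \underline{g}$, $T$-invariance, integrability of $g^+$, the a.e.\ convergence $g_n/n \to g$, and the $L^1$ identity $\int g \, \d\nu = \gamma = \inf_n a_n/n$ all follow immediately.

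\emph{Main obstacle.} The delicate step is the tiling estimate: one has to verify measurability of the greedy partition and carefully bound the bad-block contribution, especially on $\{\underline{g} = -\infty\}$, where the truncation $\underline{g}_M$ creates an error that must be absorbed by monotone convergence as $M \to \infty$. The identification $\bar{g} = \underline{g}$ is the second subtle point, requiring a further reverse-Fatou or second-tiling argument to obtain the matching upper bound $\int \bar{g} \leq \gamma$; the technical care here is essential but well-documented in the literature.
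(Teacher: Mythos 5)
The paper offers no proof of this proposition: it is quoted as a classical result with a citation to Kingman and is only used later through Remark \ref{rem.2.1} and the Furstenberg formula \eqref{equ.1.4}, so there is no in-paper argument to compare yours against. What you have written is a sketch of the standard Katznelson--Weiss/Steele proof, and its architecture is sound: Fekete's lemma for $\gamma=\inf_n a_n/n$, the subsequence-plus-Birkhoff bound $\int \underline{g}\,\d\nu\le a_n/n$ for every $n$, and the good-time tiling estimate $\gamma\le\int\underline{g}_M\,\d\nu+\epsilon+C\delta$ followed by monotone convergence in $M$ are all correct and are exactly how the literature proves the result (with the caveat that Birkhoff must be invoked in its extended form for functions whose positive part only is integrable).

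The one step that would not go through as written is your route to the matching bound $\int\bar{g}\,\d\nu\le\gamma$, which is what actually forces $\bar{g}=\underline{g}$. A ``reverse-Fatou comparison against the Birkhoff majorant $\frac{1}{n}\sum_{k=0}^{n-1}g_1\circ T^k$'' is not available: reverse Fatou requires a single integrable function dominating the whole sequence, and the Birkhoff averages change with $n$ and need not admit an integrable envelope. (The Fatou inequality $\int\bar{g}\,\d\nu\ge\gamma$ you derive from $h_n\ge 0$ is correct but points the wrong way and is not needed once the upper bound is in hand.) The standard repair is the same subsequence device you already use for the easy half: for fixed $n$ write $m=kn+r$ with $1\le r\le n$, use $g_m\le g_{kn}+g_r\circ T^{kn}$ together with $h\circ T^N/N\to 0$ a.e.\ for $h=\max_{1\le r\le n}g_r^{+}\in L^1$ to get $\bar{g}\le\limsup_k g_{kn}/(kn)\le \frac{1}{n}E[g_n\mid\mathcal{I}_{T^n}]$ a.e., and integrate to obtain $\int\bar{g}\,\d\nu\le a_n/n$ for every $n$. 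Also drop the appeal to the ergodic decomposition, which is unnecessary and not justified on an abstract probability space $(U,\mathcal{B},\nu)$. With these repairs the proposal becomes a complete proof.
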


\begin{remark}\rm\label{rem.2.1}
Consider the solution $\Phi(x,t,\omega),t\geq 0$ with an invariant measure $\mu$ for equation \eqref{equ.2.2}. Denote $T(x,\omega):=(\Phi(x,1,\omega),\theta\omega)$. We know that $T$ is a measure-preserving transformation of $\mu\times\P$ on $\R^{d}\times\Omega$.
For the sequence $g_{n}(x,\omega)=\ln\Vert\wedge^{l} D\Phi(x,n,\omega)\Vert$, as $\Vert \wedge^{l}A_{1}A_{2}\Vert\leq \Vert \wedge^{l}A_{1}\Vert\Vert \wedge^{l}A_{2}\Vert$ for any $A_{1},A_{2}\in \text{GL}(d)$, the sequence is subadditive. Note that the function $g_{1}(x,\omega)=\ln \Vert \wedge^{l}D\Phi(x,1,\omega)\Vert$ is integrable, so
$$
\lim_{n\rightarrow+\infty}\dfrac{1}{n}\int_{\R^{d}\times \Omega}\ln \Vert \wedge^{l}D\Phi(x,n,\omega)\Vert\d\mu\times \P=\int_{\R^{d}\times \Omega} \ln\Vert \wedge^{l}D\Phi(x,1,\omega)\Vert \d\mu\times \P.
$$
\end{remark}

\section{Continuity of the top Lyapunov exponent for linear SDEs}\label{sec.3}
In this section, we consider the continuity of the top Lyapunov exponent for the $d$-dimensional linear SDE
\begin{equation}\label{equ.3.1}
\d X=A(t)X\d t+B(t) X\d W, \quad X(0)=x\in\R^{d},
\end{equation}
where the equation is approximated by the following $d$-dimensional equations
\begin{equation}\label{equ.3.2}
\d X_{k}=A_{k}(t)X_{k}\d t+B_{k}(t) X_{k}\d W,\quad X_{k}(0)=x\in\R^{d},\quad k\in\N^{+}.
\end{equation}
The coefficients $A,B,A_{k},B_{k}:\R^+\rightarrow \R^{d\times d}$ are continuous and bounded by the same constant $K$. Consequently, we obtain the corresponding stochastic flows $\Phi_{s,t}(\omega)$
and $\Phi_{k,s,t}(\omega)$, $0\leq s\leq t$. As discussed in Kunita \cite[p. 108]{Kunita2019}, the inverse flow $\Phi_{s,t}^{-1}$ is defined by the following backward SDE
$$
\Phi_{s,t}^{-1}(x)=x-\int_{s}^{t}[A(r)-B^{2}(r)]\Phi^{-1}_{r,t}(x)\d r-\int_{s}^{t} B(r)\Phi^{-1}_{r,t}(x)\d W(r),\quad x\in\R^{d}.
$$
The estimate of flow $\Phi^{-1}_{s,t}$ can be obtained using a similar method to that of Lemma~\ref{lem.3.1}.

\begin{lemma}\label{lem.3.1}
For equation \eqref{equ.3.1}, assuming $A,B$ are continuous and bounded by a constant $K$, we have the following estimate for any $0\leq s\leq t$,
$$
E\Vert\Phi_{s,t}(\omega)\Vert^{2}\leq Ce^{3K^{2}(t-s)^{2}+3K^{2}(t-s)},\quad 	E\Vert\Phi_{s,t}^{-1}(\omega)\Vert^{2}\leq Ce^{6(K^{2}+K^{4})(t-s)^{2}+3K^{2}(t-s)},
$$
where $C$ is a constant dependent only on $d$.
\end{lemma}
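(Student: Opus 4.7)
The plan is a standard second-moment estimate combined with Gronwall's inequality. Working column by column, so that each $v^l_t:=\Phi_{s,t}e_l$ satisfies the vector SDE $dv^l_t=A(t)v^l_t\,dt+B(t)v^l_t\,dW$ with initial condition $v^l_s=e_l$, I would convert to the integral form
\begin{equation*}
v^l_t = e_l + \int_s^t A(r)v^l_r\,dr + \int_s^t B(r)v^l_r\,dW(r),
\end{equation*}
then square and take expectation. Applying the elementary inequality $|a+b+c|^2\leq 3(|a|^2+|b|^2+|c|^2)$, Cauchy--Schwarz on the drift integral and It\^o's isometry on the stochastic integral, together with the norm equivalence \eqref{equ.2.4} and the bound $|A|,|B|\leq K$, this yields an inequality of the form
\begin{equation*}
E|v^l_t|^2 \leq 3 + 3K^2\bigl[(t-s)+1\bigr]\int_s^t E|v^l_r|^2\,dr, \qquad s\leq t.
\end{equation*}
After fixing the upper endpoint and treating the bracketed prefactor as a constant, Gronwall's inequality gives
\begin{equation*}
E|v^l_t|^2 \leq 3\exp\bigl(3K^2(t-s)^2+3K^2(t-s)\bigr),
\end{equation*}
and summing over $l=1,\dots,d$ and invoking \eqref{equ.2.4} once more to pass from $|\Phi_{s,t}|$ to $\|\Phi_{s,t}\|$ delivers the first bound, with $C$ depending only on $d$. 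The quadratic term $K^2(t-s)^2$ in the exponent arises precisely because the Cauchy--Schwarz prefactor $(t-s)$ in front of the drift integral multiplies the Gronwall integration length $(t-s)$.

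For the inverse flow, I would apply the same three-step procedure (integral form, second-moment inequality, Gronwall) to each column of the backward SDE
\begin{equation*}
\Phi_{s,t}^{-1}(x) = x - \int_s^t [A(r)-B^2(r)]\Phi_{r,t}^{-1}(x)\,dr - \int_s^t B(r)\Phi_{r,t}^{-1}(x)\,dW(r)
\end{equation*}
recorded in the excerpt. The only difference is that the effective drift $A-B^2$ has squared operator norm bounded by $2(\|A\|^2+\|B\|^4)\leq C(K^2+K^4)$, while the diffusion $B$ is unchanged. Propagating this through the Cauchy--Schwarz step produces the $6(K^2+K^4)(t-s)^2$ contribution in the exponent, and the It\^o-isometry step produces the $3K^2(t-s)$ contribution, giving the second bound.

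The calculation is entirely routine and I anticipate no substantive obstacle. The only minor technical point is that a priori one must know $t\mapsto E|v^l_t|^2$ is locally finite before Gronwall can be invoked; this is handled by a standard stopping-time localization $\tau_N=\inf\{t:|v^l_t|>N\}$, running the estimate up to $t\wedge\tau_N$, and passing to the limit $N\to\infty$ by monotone convergence. Achieving exactly the constants $3$ and $6$ in the exponents (given the dimensional norm-equivalence constant $C_1$ from \eqref{equ.2.4}) requires a little bookkeeping, but any leftover dimensional factors are absorbed into the prefactor $C(d)$.
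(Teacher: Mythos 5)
Your proposal matches the paper's proof essentially step for step: integral form, the $|a+b+c|^2\le 3(\cdots)$ split with Cauchy--Schwarz on the drift and It\^o's isometry on the stochastic integral, Gronwall, and then the norm equivalence \eqref{equ.2.4}, with the inverse flow handled identically via the backward SDE. The only differences are cosmetic (columns $e_l$ versus a generic unit vector $x$) plus your added remark on localization, which the paper omits; the argument is correct.
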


\begin{proof}
By the Cauchy-Schwarz inequality and It\^o's isometry,
$$
\begin{aligned}
E\vert\Phi_{s,t}(\omega)x\vert^{2}\leq& 3\vert x\vert^{2}+3E\big\vert\int_{s}^{t}A(r)\Phi_{s,r}(\omega)x\d r\big\vert^{2}+3E\big\vert\int_{s}^{t}B(r)\Phi_{s,r}(\omega)x\d W(r)\big\vert^{2}\\
\leq &3\vert x\vert^{2}+[3K^{2}(t-s)+3K^{2}]E\int_{s}^{t}\vert\Phi_{s,r}(\omega)x\vert^{2}\d r.
\end{aligned}
$$
Using the Gronwall inequality, we derive the estimate
$$
E\vert\Phi_{s,t}(\omega)x\vert^{2}\leq 3\vert x\vert^{2}e^{3K^{2}(t-s)^{2}+3K^{2}(t-s)},
$$
which, together with relationship \eqref{equ.2.4}, implies
$$
E\Vert\Phi_{s,t}(\omega)\Vert^{2}\leq C_{1}^{2}E\vert\Phi_{s,t}(\omega)\vert^{2}\leq \max_{\vert x\vert=1}E\vert\Phi_{s,t}(\omega)x\vert^{2}\leq 3C_{1}^{2}e^{3K^{2}(t-s)^{2}+3K^{2}(t-s)}.
$$
The proof of $\Phi_{s,t}$ is complete. Similarly, we can get estimate of $\Phi^{-1}_{s,t}$.
\end{proof}

\begin{remark}\label{rem-mom}\rm
It should be pointed out that we can get the estimate of any $p$-th ($p\ge 2$) moment of $\Phi_{s,t}(\omega)$ and $\Phi_{s,t}^{-1}(\omega)$
under the assumption of Lemma \ref{lem.3.1}. We omit details here.
\end{remark}

\begin{lemma}\label{lem.3.3}
For equations \eqref{equ.3.1} and \eqref{equ.3.2}, suppose that $A,B,A_{k},B_{k}$ are continuous and bounded by a constant $K$ for all $k\in\N^+$. If
$$
\lim\limits_{k\rightarrow+\infty}\sup_{t\in\R^{+}}\vert A_{k}(t)-A(t)\vert+\sup_{t\in\R^{+}}\vert B_{k}(t)-B(t)\vert=0,
$$
then
$$
\lim\limits_{k\rightarrow+\infty}E\Vert\Phi_{k,i-1,i}(\omega)-\Phi_{i-1,i}(\omega)\Vert^{2}=0\quad \text{uniformly},
$$
with respect ot $i\in\N^{+}$.
\end{lemma}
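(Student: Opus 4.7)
The plan is to study the difference $Y_k(t)(x):=\Phi_{k,i-1,t}(\omega)x-\Phi_{i-1,t}(\omega)x$ for $t\in[i-1,i]$ by reducing it to a linear SDE that it itself satisfies, then closing a Gronwall-type estimate on the second moment. The crucial point is that the interval length equals one and all constants in play depend only on $K$, so nothing depends on $i$.

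First I would write the integral form of both flows applied to an arbitrary unit vector $x\in\R^{d}$, subtract, and use the algebraic identity $A_k\Phi_k-A\Phi=A_k(\Phi_k-\Phi)+(A_k-A)\Phi$ (and analogously for the diffusion) to obtain
\begin{equation*}
Y_k(t)x=\int_{i-1}^{t}A_k(r)Y_k(r)x\,\d r+\int_{i-1}^{t}B_k(r)Y_k(r)x\,\d W+R_k^A(t)+R_k^B(t),
\end{equation*}
where $R_k^A(t)=\int_{i-1}^{t}(A_k(r)-A(r))\Phi_{i-1,r}(\omega)x\,\d r$ and $R_k^B(t)=\int_{i-1}^{t}(B_k(r)-B(r))\Phi_{i-1,r}(\omega)x\,\d W(r)$ collect the coefficient-error terms involving the unperturbed flow.

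Next I would take $E|\cdot|^2$, apply the elementary $|a+b+c+d|^2\le 4(|a|^2+|b|^2+|c|^2+|d|^2)$, bound the two stochastic integrals by It\^o's isometry, and bound the two Lebesgue integrals by Cauchy--Schwarz. Set $\eps_k:=\sup_{t\in\R^+}|A_k(t)-A(t)|+\sup_{t\in\R^+}|B_k(t)-B(t)|\to 0$. Using Lemma~\ref{lem.3.1} on the interval $[i-1,r]\subset[i-1,i]$, we have $E\vert\Phi_{i-1,r}(\omega)x\vert^{2}\le M|x|^2$ for a constant $M=M(K,d)$ that is \emph{independent of $i$} because the interval length is at most $1$. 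This gives an inequality of the form
\begin{equation*}
E|Y_k(t)x|^{2}\le C_K\int_{i-1}^{t}E|Y_k(r)x|^{2}\,\d r+C_K\,M|x|^{2}\eps_k^{2},\qquad t\in[i-1,i],
\end{equation*}
with $C_K$ depending only on $K$. Gronwall's inequality then yields $E|Y_k(i)x|^{2}\le C_K M|x|^{2}\eps_k^{2}e^{C_K}$, uniformly in $i$.

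Finally, I would pass from the vector estimate to the operator-norm estimate. By the definition $|A|=\max_{1\le l\le d}|v_l|$ used in Section~\ref{sec.2} and the equivalence \eqref{equ.2.4}, we have $\Vert\Phi_{k,i-1,i}-\Phi_{i-1,i}\Vert^{2}\le C_1^{2}\max_{l}|Y_k(i)e_l|^{2}\le C_1^{2}\sum_{l=1}^{d}|Y_k(i)e_l|^{2}$; applying the per-vector bound to each standard basis vector $e_l$ and summing gives $E\Vert\Phi_{k,i-1,i}(\omega)-\Phi_{i-1,i}(\omega)\Vert^{2}\le C\eps_k^{2}$ with $C$ depending only on $K$ and $d$. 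Since $\eps_k\to 0$, the conclusion follows uniformly in $i$. I do not foresee a genuine obstacle here; the only point requiring care is that every constant arising from Lemma~\ref{lem.3.1} (moment bounds for $\Phi_{i-1,\cdot}$) and from the Gronwall step depends only on $K$, $d$ and the fixed interval length $1$, so uniformity in $i$ is automatic.
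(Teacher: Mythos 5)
Your proposal is correct and follows essentially the same route as the paper: the same decomposition $A_k\Phi_k-A\Phi=A_k(\Phi_k-\Phi)+(A_k-A)\Phi$ (the Friedman-style argument the paper cites), the same Gronwall closure on $[i-1,i]$, and the same observation that the moment bound from Lemma~\ref{lem.3.1} on a unit-length interval is independent of $i$, which is exactly how the paper obtains uniformity. The only cosmetic difference is that you absorb the remainder into the Gronwall inequality as a constant before applying it, whereas the paper applies Gronwall first and then bounds $E\vert\eta_{k,i}\vert^{2}$; these are equivalent.
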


\begin{proof}
By relationship \eqref{equ.2.4}, we know that for any $k,i\in \N^{+}$
$$
\begin{aligned}
E\Vert\Phi_{k,i-1,i}(\omega)-\Phi_{i-1,i}(\omega)\Vert^{2}
\leq C_{1}^{2}E\vert\Phi_{k,i-1,i}(\omega)-\Phi_{i-1,i}(\omega)\vert^{2}.
\end{aligned}
$$
Following the method stated in Friedman \cite[p. 119]{Friedman}, for any $x\in\R^{d}$, we obtain the inequality
$$
\begin{aligned}
&E\vert\Phi_{k,i-1,i}(\omega)x-\Phi_{i-1,i}(\omega)x\vert^{2}\\
\leq& 3E\vert\eta_{k,i}\vert^{2}+3E\int_{i-1}^{i}\vert A_{k}(t)(\Phi_{k,i-1,t}(\omega)-\Phi_{i-1,t}(\omega))\vert^{2}\d t\\
&+3E\int_{i-1}^{i} \vert B_{k}(t)(\Phi_{k,i-1,t}(\omega)-\Phi_{i-1,t}(\omega))\vert^{2}\d t\\
\leq &3E\vert\eta_{k,i}\vert^{2}+6K^{2}E\int_{i-1}^{i} \vert\Phi_{k,i-1,t}(\omega)-\Phi_{i-1,t}(\omega)\vert^{2}\d t,
\end{aligned}
$$
where
$$
\eta_{k,i}=\int_{i-1}^{i} (A_{k}(t)-A(t))\Phi_{i-1,t}(\omega)\d t+\int_{i-1}^{i} (B_{k}(t)-B(t))\Phi_{i-1,t}(\omega)\d W.
$$
The Gronwall inequality implies that
\begin{equation}\label{equ.3.12}
\begin{aligned}
E\vert\Phi&_{k,i-1,i}(\omega)x-\Phi_{i-1,i}(\omega)x\vert^{2}
\leq 3e^{6K^{2}}E\vert\eta_{k,i}\vert^{2}\\
\leq& 6e^{6K^{2}}\big[\sup_{t\in[i-1,1]}\vert A_{k}(t)-A(t)\vert^{2}+\sup_{t\in[i-1,1]}\vert B_{k}(t)-B(t)\vert^{2}\big]E\int_{i-1}^{i}\vert\Phi_{i-1,t}(\omega)\vert^{2}\d t.
\end{aligned}
\end{equation}
According to Lemma \ref{lem.3.1}, we have
$$
E\int_{i-1}^{i}\vert\Phi_{i-1,t}(\omega)\vert^{2}\d t\leq C,$$
where $C$ is independent of $i\in\N^{+}$. With this, the proof of the lemma is complete.
\end{proof}

Before stating the continuity theorem, it is essential to introduce the following property of sequences $\left\{A_{k}\right\}_{k\in\N^{+}}$ and $\left\{B_{k}\right\}_{k\in\N^{+}}$.

\noindent\textbf{Property 1.}
{\em For any $k\in \N^{+}$, there exists a sequence $\left\{U_{k,i} \right\}_{i\in \N^{+}}$, where $U_{k,i}$ consists of at most countably many disjoint intervals within interval $[i-1,i)$. And the sequence $\left\{U_{k,i} \right\}_{i\in \N^{+}}$ satisfies
\begin{equation}\label{equ.3.9}
\lim_{k\rightarrow\infty}\limsup\limits_{n\rightarrow\infty}\dfrac{1}{n}\sum_{i=1}^{n} (L(U_{k,i}))^{\frac{1}{2}}=0,
\end{equation}
where $L(U_{k,i})$ is the sum of the lengths of all the intervals it contains. For any given $k\in\N^{+}$, let
\[
A'_k(t):= \left\{\begin{array}{ll}
		A(t),	& t\in U_{k,i},\\
		A_k(t),	& t\in [i-1,i)\setminus U_{k,i},
\end{array}\right.
\quad \hbox{for all }i\in \N^+.
\]
The sequence $\left\{A'_{k}\right\}_{k\in\N^{+}}$ converges uniformly to $A$ on $\R^+$ as $k\rightarrow\infty$. The similar holds for the sequence $\left\{B_{k}\right\}_{k\in\N^{+}}$.}

Property 1 means that we can change $A_k$ and $B_k$ over a relatively ``small" set such that they uniformly converge on $\R^+$.  Note that if $A_k\to A$ and $B_k\to B$ uniformly on $\R^+$, then Property 1 trivially holds.

\begin{theorem}\label{the.3.1}
For equations \eqref{equ.3.1} and \eqref{equ.3.2}, if Property 1 holds and $A,B,A_{k},B_{k},k\in \N^{+}$ are continuous functions bounded by a constant $K$, then the top Lyapunov exponent is continuous:
\begin{center}
$\lim\limits_{k\rightarrow\infty}\lambda_{1,k}=\lambda_{1}$.
\end{center}
\end{theorem}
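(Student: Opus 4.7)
The plan is to use Property~1 to split $(A_k,B_k)-(A,B)$ into a uniformly convergent piece handled by Lemma~\ref{lem.3.3} and a highly localized piece whose aggregate effect is controlled by the square-root summability in \eqref{equ.3.9}. Introduce the auxiliary coefficients $(A_k',B_k')$ from Property~1 (after taking unions of the modification sets for $A_k$ and $B_k$ if necessary) and let $\Phi_{k,s,t}'(\omega)$ denote the two-parameter stochastic flow they generate, with top Lyapunov exponent $\lambda_{1,k}'$. Since $A_k'\to A$ and $B_k'\to B$ uniformly on $\R^+$, Lemma~\ref{lem.3.3} applied to the pair $(A,B)$ and $(A_k',B_k')$ immediately yields $\sup_i E\|\Phi_{k,i-1,i}'-\Phi_{i-1,i}\|^2\to 0$. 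It remains to compare $\Phi_{k,s,t}$ with $\Phi_{k,s,t}'$: they coincide on $[i-1,i)\setminus U_{k,i}$ and on $U_{k,i}$ differ by coefficients bounded by $2K$.

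For the localized part, set $Y_t:=\Phi_{k,i-1,t}-\Phi'_{k,i-1,t}$; then $Y$ satisfies a linear SDE of the same type as \eqref{equ.3.1} with inhomogeneous forcing supported on $U_{k,i}$. An It\^o isometry / Gronwall calculation in the style of Lemma~\ref{lem.3.3} produces
\[
E\|\Phi_{k,i-1,i}-\Phi'_{k,i-1,i}\|^2\le C(K,d)\,L(U_{k,i}).
\]
To transfer this to logarithms of operator norms I would combine the elementary bound
$|\ln\|M\|-\ln\|M'\||\le \|M-M'\|\,\max(\|M^{-1}\|,\|M'^{-1}\|)$
with Cauchy--Schwarz and the moment bounds on $\Phi^{-1}$ from Lemma~\ref{lem.3.1} and Remark~\ref{rem-mom} (applied also to $(\Phi_k')^{-1}$). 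This yields, uniformly in $i$,
\[
E\bigl|\ln\|\Phi_{k,i-1,i}\|-\ln\|\Phi'_{k,i-1,i}\|\bigr|\le C\,L(U_{k,i})^{1/2}, \qquad \sup_i E\bigl|\ln\|\Phi'_{k,i-1,i}\|-\ln\|\Phi_{i-1,i}\|\bigr|\xrightarrow[k\to\infty]{}0,
\]
which is precisely why Property~1 is formulated with the square-root exponent $L(U_{k,i})^{1/2}$.

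The final step is to pass from one-step log-norm comparisons to the top Lyapunov exponent. Using submultiplicativity,
$\tfrac{1}{n}\ln\|\Phi_{k,0,n}\|\le\tfrac{1}{n}\sum_{i=1}^n\ln\|\Phi_{k,i-1,i}\|$,
together with the independence of $\{\Phi_{k,i-1,i}\}_i$ (the coefficients are deterministic, so the one-step flows depend on disjoint Brownian increments) and a strong-law argument based on the uniform $L^2$ bound of Lemma~\ref{lem.3.1}, one passes to expectations. Property~1 then matches $\tfrac{1}{n}\sum_i E\ln\|\Phi_{k,i-1,i}\|$ with $\tfrac{1}{n}\sum_i E\ln\|\Phi_{i-1,i}\|$ up to an error $O\bigl(\tfrac{1}{n}\sum_i L(U_{k,i})^{1/2}\bigr)$, which vanishes as $k\to\infty$ by \eqref{equ.3.9}, giving $\limsup_k\lambda_{1,k}\le\lambda_1$. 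The matching lower bound for $\lambda_{1,k}$ is obtained from the analogous analysis for the backward linear SDE satisfied by $\Phi^{-1}_{s,t}$ (see the paragraph preceding Lemma~\ref{lem.3.1}), using $\|M\|\cdot\|M^{-1}\|\ge 1$ to turn an upper bound on $\|\Phi^{-1}_{k,0,n}\|$ into a lower bound on $\|\Phi_{k,0,n}\|$. I expect the hard part to lie here: submultiplicativity gives only a one-sided bound, and matching it from below demands the careful inverse-flow analysis plus independence-based concentration to prevent small one-step perturbations from compounding over the long horizon.
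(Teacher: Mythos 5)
Your decomposition via the auxiliary coefficients $(A_k',B_k')$ and the resulting one--step estimates, namely $E\Vert\Phi_{k,i-1,i}-\Phi'_{k,i-1,i}\Vert^{2}\le C\,L(U_{k,i})$ together with $\sup_i E\Vert\Phi'_{k,i-1,i}-\Phi_{i-1,i}\Vert^{2}\to0$, are sound and essentially reproduce the paper's treatment of Property 1 (the paper does not introduce the auxiliary flow explicitly; it splits the integrals defining $\eta_{k,i}$ over $U_{k,i}$ and its complement, which amounts to the same thing, and the square root in \eqref{equ.3.9} enters for exactly the reason you identify). The genuine gap is in your final step. Submultiplicativity only gives the one--sided bounds $\lambda_{1,k}\le\limsup_n\frac1n\sum_{i=1}^{n}E\ln\Vert\Phi_{k,i-1,i}\Vert$ and $\lambda_{1}\le\limsup_n\frac1n\sum_{i=1}^{n}E\ln\Vert\Phi_{i-1,i}\Vert$, and for $d\ge2$ these upper bounds are in general \emph{strictly} larger than the Lyapunov exponents (one--step operator norms ignore the rotation of expanding directions: a product of matrices each of norm $2$ can have top exponent $0$). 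Hence matching the two one--step sums only shows that $\lambda_{1,k}$ and $\lambda_{1}$ lie below the same, possibly much too large, number; it does not yield $\limsup_k\lambda_{1,k}\le\lambda_{1}$. The inverse--flow idea for the other direction has the same defect and an additional one: $\Vert M\Vert\ge\Vert M^{-1}\Vert^{-1}$ bounds $\frac1n\ln\Vert\Phi_{k,0,n}\Vert$ from below by minus the growth rate of $\Phi_{k,0,n}^{-1}$, i.e.\ by the \emph{smallest} Lyapunov exponent, which is useless for controlling $\lambda_{1,k}$ from below by $\lambda_1$ unless $d=1$.

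What is needed instead --- and what the paper does --- is to compare the two $n$--step norms through the single ratio $\Vert\Phi(n)\Vert/\Vert\Phi_k(n)\Vert\le\Vert\Phi(n)\Phi_k^{-1}(n)\Vert$ and to telescope this product, by cyclically commuting factors, into $\prod_{i=1}^{n}\Vert\Phi_{i-1,i}\Phi_{k,i-1,i}^{-1}\Vert$. Each comparison factor $\Phi_{i-1,i}\Phi_{k,i-1,i}^{-1}$ is close to the identity, so $E\ln\Vert\Phi_{i-1,i}\Phi_{k,i-1,i}^{-1}\Vert\le\big(E\Vert\Phi_{k,i-1,i}-\Phi_{i-1,i}\Vert^{2}\big)^{1/2}\big(E\Vert\Phi_{k,i-1,i}^{-1}\Vert^{2}\big)^{1/2}$ is small; your one--step $L^{2}$ estimates (including the $L(U_{k,i})^{1/2}$ bound) then close the argument after a strong law for the independent variables $\ln\Vert\Phi_{i-1,i}\Phi_{k,i-1,i}^{-1}\Vert$, and the symmetric bound on $\Vert\Phi_k(n)\Vert/\Vert\Phi(n)\Vert$ gives the reverse inequality with no inverse--flow Lyapunov analysis. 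Without this ratio/telescoping device (or some substitute that compares the $n$--step cocycles directly rather than their one--step norms), the proposal does not prove the theorem.
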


\begin{proof}
For simplicity, we denote the stochastic flows $\Phi_{0,t}$ and $\Phi_{k,0,t}$ of equations~\eqref{equ.3.1} and \eqref{equ.3.2} as $\Phi(t)$ and $\Phi_{k}(t)$ respectively, for any $t\geq 0$ and $k\in \N^{+}$. Then the top Lyapunov exponents can be expressed by the formulas
\begin{center}$
\limsup\limits_{t\rightarrow\infty}\dfrac{1}{t}\ln \Vert \Phi(t) \Vert=\lambda_{1},\qquad
\limsup\limits_{t\rightarrow\infty}\dfrac{1}{t}\ln \Vert \Phi_{k}(t) \Vert=\lambda_{1,k}.$
\end{center}
\textbf{(1)}
Firstly, let us consider the situation that the sequences $\left\{A_{k}\right\}_{k\in\N^{+}}$ and $\left\{B_{k}\right\}_{k\in\N^{+}}$ converge uniformly to $A$ and $B$ on $\R^+$.

Note that
\begin{equation}\label{equ.3.3}
\begin{aligned}
\left| \lambda_{1,k}-\lambda_{1}\right|=&\Big\vert\limsup\limits_{t\rightarrow\infty}\dfrac{1}{t}\ln \Vert\Phi_{k}(t) \Vert-	\limsup\limits_{t\rightarrow\infty}\dfrac{1}{t}\ln \Vert\Phi(t) \Vert\Big\vert\\
=&\Big\vert\limsup\limits_{n\rightarrow\infty}\dfrac{1}{n}\ln \Vert\Phi_{k}(n) \Vert-	\limsup\limits_{n\rightarrow\infty}\dfrac{1}{n}\ln \Vert\Phi(n) \Vert\Big\vert\\
\leq&\Big\vert	\limsup\limits_{n\rightarrow\infty}\dfrac{1}{n}\ln \dfrac{\Vert\Phi_{k}(n)  \Vert}{\Vert\Phi(n)  \Vert}\Big\vert\\
=&\max\left\{\limsup\limits_{n\rightarrow\infty}\dfrac{1}{n}\ln \dfrac{\Vert\Phi(n)  \Vert}{\Vert\Phi_{k}(n)  \Vert},\limsup\limits_{n\rightarrow\infty}\dfrac{1}{n}\ln \dfrac{\Vert\Phi_{k}(n)  \Vert}{\Vert\Phi(n)  \Vert}\right\}.\\
\end{aligned}
\end{equation}
Here, the first term can be estimated as follows:
$$
\limsup\limits_{n\rightarrow\infty}\dfrac{1}{n}\ln \dfrac{\Vert\Phi(n)  \Vert}{\Vert\Phi_{k}(n)  \Vert}
\leq \limsup\limits_{n\rightarrow\infty}\dfrac{1}{n}\ln \Vert\Phi(n)\Phi_{k}^{-1}(n) \Vert.
$$
As we know, the stochastic flows $\Phi(n)$ and $\Phi_{k}(n)$ can be expressed as $\Phi_{n-1,n}\Phi_{n-2,n-1}\cdots\Phi_{0,1}$ and $\Phi_{k,n-1,n}\Phi_{k,n-2,n-1}\cdots\Phi_{k,0,1}$. For simplicity,  let $\Phi_{i-1,i}$ and $\Phi_{k,i-1,i}$ be denoted as $\xi_{i}$ and $\xi_{k,i}$.  By applying the property $\Vert AB\Vert=\Vert BA\Vert$ for invertible matrices $A$ and $B$, we can deduce that
$$
\begin{aligned}
\Vert\Phi(n)\Phi_{k}^{-1}(n)\Vert=&\Vert\xi_{n}\xi_{n-1}\cdots\xi_{1}(\xi_{k,n}\xi_{k,n-1}\cdots\xi_{k,1})^{-1}  \Vert\\
=&\Vert\xi_{n}\xi_{n-1}\cdots\xi_{1}\xi_{k,1}^{-1}\cdots\xi_{k,n-1}^{-1}\xi_{k,n}^{-1}\Vert\\
=&\Vert\xi_{k,n}^{-1}\xi_{n}\xi_{n-1}\cdots\xi_{1}\xi_{k,1}^{-1}\cdots\xi_{k,n-1}^{-1}\Vert\\
\leq&\Vert\xi_{k,n}^{-1}\xi_{n}\Vert\Vert\xi_{n-1}\cdots\xi_{1}\xi_{k,1}^{-1}\cdots\xi_{k,n-1}^{-1}\Vert.\\
\end{aligned}
$$
Through induction, we derive the following estimate
$$
\Vert\Phi(n)\Phi_{k}^{-1}(n)  \Vert\leq\prod_{i=1}^{n}\Vert\xi_{i}\xi_{k,i}^{-1}\Vert.
$$
So we get
\begin{equation}\label{equ.3.7}
\begin{aligned}
\limsup\limits_{n\rightarrow\infty}\dfrac{1}{n}\ln \Vert\Phi(n)\Phi_{k}^{-1}(n)  \Vert\leq\limsup\limits_{n\rightarrow\infty}\dfrac{1}{n}\sum_{i=1}^{n}\ln\Vert\xi_{i}\xi_{k,i}^{-1}\Vert.
\end{aligned}
\end{equation}

Denote $\ln^{+}x:=\max\left\{0,\ln x\right\}$ and $ \ln^{-}x:=\max\left\{0,-\ln x\right\}.$
Consider the $4$-th moment
$$
E\big\vert \ln^{+}\Vert\xi_{i}\xi_{k,i}^{-1}\Vert-E\ln^{+}\Vert\xi_{i}\xi_{k,i}^{-1}\Vert\big\vert^{4}\leq 8E\big\vert\ln^{+}\Vert\xi_{i}\xi_{k,i}^{-1}\Vert\big\vert^{4}+8\big\vert E\ln^{+}\Vert\xi_{i}\xi_{k,i}^{-1}\Vert\big\vert^{4},
$$
where for any $k,i\in\N^{+}$,
$$
E\big\vert\ln^{+}\Vert\xi_{i}\xi_{k,i}^{-1}\Vert\big\vert^{4}\leq E\Vert\xi_{i}\xi_{k,i}^{-1}\Vert^{4}\leq (E\Vert\xi_{i}\Vert^{8})^{\frac{1}{2}}\times (E\Vert\xi_{k,i}^{-1}\Vert^{8})^{\frac{1}{2}}\leq C
$$
by Remark \ref{rem-mom}. Hence, the random variables $\ln^{+}\Vert\xi_{i}\xi_{k,i}^{-1}\Vert-E\ln^{+}\Vert\xi_{i}\xi_{k,i}^{-1}\Vert,k,i\in\N^{+}$ have bounded $4$-th moments.
 By the strong law of large numbers \cite[p. 388]{Cantelli}, the equality holds
\begin{equation}\label{equ.3.27}
\lim\limits_{n\rightarrow\infty}\dfrac{1}{n}\Big[\sum_{i=1}^{n}\ln^{+}\Vert\xi_{i}\xi_{k,i}^{-1}\Vert-\sum_{i=1}^{n}E(\ln^{+}\Vert\xi_{i}\xi_{k,i}^{-1}\Vert)\Big]=0.\qquad (\P-a.s.)
\end{equation}
Similarly,
$$
E\big\vert \ln^{-}\Vert\xi_{i}\xi_{k,i}^{-1}\Vert-E\ln^{-}\Vert\xi_{i}\xi_{k,i}^{-1}\Vert\big\vert^{4}\leq 8E\big\vert\ln^{-}\Vert\xi_{i}\xi_{k,i}^{-1}\Vert\big\vert^{4}+8\big\vert E\ln^{-}\Vert\xi_{i}\xi_{k,i}^{-1}\Vert\big\vert^{4}
$$
and
$$\begin{aligned}
&E\big\vert\ln^{-}\Vert\xi_{i}\xi_{k,i}^{-1}\Vert\big\vert^{4}=E\big\vert\ln^{+}\Vert\xi_{i}\xi_{k,i}^{-1}\Vert^{-1}\big\vert^{4}\leq E\Vert\xi_{i}\xi_{k,i}^{-1}\Vert^{-4}\\
&\qquad\qquad\qquad\qquad\qquad\qquad\qquad\leq E\Vert\xi_{k,i}\xi_{i}^{-1}\Vert^{4}\leq (E\Vert\xi_{k,i}\Vert^{8})^{\frac{1}{2}}\times (E\Vert\xi_{i}^{-1}\Vert^{8})^{\frac{1}{2}}\leq C.
\end{aligned}
$$
So we have
\begin{equation}\label{equ.3.33}
\lim\limits_{n\rightarrow\infty}\dfrac{1}{n}\Big[\sum_{i=1}^{n}\ln^{-}\Vert\xi_{i}\xi_{k,i}^{-1}\Vert-\sum_{i=1}^{n}E(\ln^{-}\Vert\xi_{i}\xi_{k,i}^{-1}\Vert)\Big]=0.\qquad (\P-a.s.)
\end{equation}
According to equalities \eqref{equ.3.27} and \eqref{equ.3.33}, we have
\begin{equation}\label{mome}
\lim\limits_{n\rightarrow\infty}\dfrac{1}{n}\Big[\sum_{i=1}^{n}\ln\Vert\xi_{i}\xi_{k,i}^{-1}\Vert-\sum_{i=1}^{n}E(\ln\Vert\xi_{i}\xi_{k,i}^{-1}\Vert)\Big]=0.\qquad (\P-a.s.)
\end{equation}

Note that
\begin{align}\label{equ.3.13}
&\limsup\limits_{n\rightarrow\infty}\dfrac{1}{n}\sum_{i=1}^{n}E(\ln\Vert\xi_{i}\xi_{k,i}^{-1}\Vert)\nonumber\\
\leq&\limsup\limits_{n\rightarrow\infty}\dfrac{1}{n}\sum_{i=1}^{n}E\big(\ln(1+\Vert(\xi_{k,i}-\xi_{i})\xi_{k,i}^{-1}\Vert)\big)\nonumber\\
\leq&\limsup\limits_{n\rightarrow\infty}\dfrac{1}{n}\sum_{i=1}^{n}E\big(\Vert(\xi_{k,i}-\xi_{i})\Vert\Vert\xi_{k,i}^{-1}\Vert\big)\nonumber\\
\leq&\limsup\limits_{n\rightarrow\infty}\dfrac{1}{n}\sum_{i=1}^{n}\big(E\Vert\xi_{k,i}-\xi_{i}\Vert^{2}\big)^{\frac{1}{2}}
\big(E\Vert\xi_{k,i}^{-1}\Vert^{2}\big)^{\frac{1}{2}}.
\end{align}
Taking the limit as $k\rightarrow\infty$ on both sides of the above inequality, we have
$$
\begin{aligned}
\lim\limits_{k\rightarrow\infty}\limsup\limits_{n\rightarrow\infty}\dfrac{1}{n}\sum_{i=1}^{n}E(\ln\Vert\xi_{i}\xi_{k,i}^{-1}\Vert)
\leq \lim_{k\rightarrow\infty}\limsup\limits_{n\rightarrow\infty}\dfrac{1}{n}\sum_{i=1}^{n}\big(E\Vert\xi_{k,i}-\xi_{i}\Vert^{2}\big)^{\frac{1}{2}}\big(E\Vert\xi_{k,i}^{-1}\Vert^{2}\big)^{\frac{1}{2}}.
\end{aligned}
$$
Since
$$
\lim_{k\rightarrow\infty}	E\Vert\xi_{k,i}-\xi_{i}\Vert^{2}=0
$$
uniformly with respect to $i\in \N^{+}$ by Lemma \ref{lem.3.3} and $E\Vert\xi_{k,i}^{-1}\Vert^{2},k,i\in\N^{+}$ share the same bound by Lemma \ref{lem.3.1}, it follows that
\begin{equation}\label{equ.3.4}
\begin{aligned}
\lim\limits_{k\rightarrow\infty}\limsup\limits_{n\rightarrow\infty}\dfrac{1}{n}\sum_{i=1}^{n}E(\ln\Vert\xi_{i}\xi_{k,i}^{-1}\Vert)
\leq0.
\end{aligned}
\end{equation}

According to estimates \eqref{equ.3.7}, \eqref{mome} and \eqref{equ.3.4}, we obtain
$$
\begin{aligned}
\lim_{k\rightarrow\infty}\limsup\limits_{n\rightarrow\infty}\dfrac{1}{n}\ln \dfrac{\Vert\Phi(n)  \Vert}{\Vert\Phi_{k}(n)  \Vert}\leq0.
\end{aligned}
$$
By the similar method,
$$
\lim_{k\rightarrow\infty}\limsup\limits_{n\rightarrow\infty}\dfrac{1}{n}\ln \dfrac{\Vert\Phi_{k}(n)  \Vert}{\Vert\Phi(n)  \Vert}\leq0.
$$
Then
$$
\lim_{k\rightarrow\infty}\vert\lambda_{1,k}-\lambda_{1}\vert=0.
$$
So far, the proof of \textbf{(1)} is complete.

\textbf{(2)}
Next, our objective is to demonstrate that if the sequences $\left\{A_{k}\right\}_{k\in \N^{+}}$ and $\left\{B_{k}\right\}_{k\in \N^{+}}$ satisfy the Property 1, then the top Lyapunov exponent is continuous.

From estimates \eqref{equ.3.7}, \eqref{mome} and \eqref{equ.3.13} in \textbf{(1)}, we get
\begin{align}\label{equ.3.17}
\limsup\limits_{n\rightarrow\infty}\dfrac{1}{n}\ln \dfrac{\Vert\Phi(n)  \Vert}{\Vert\Phi_{k}(n)  \Vert}
\leq\limsup\limits_{n\rightarrow\infty}\dfrac{1}{n}\sum_{i=1}^{n}\big(E\Vert\xi_{k,i}-\xi_{i}\Vert^{2}\big)^{\frac{1}{2}}\big(E\Vert\xi_{k,i}^{-1}\Vert^{2}\big)^{\frac{1}{2}}.
\end{align}
According to inequality \eqref{equ.3.12} in Lemma \ref{lem.3.3}, we know that for some constant $C$,
\begin{equation}\label{equ.3.15}
\begin{aligned}
E\Vert\xi_{k,i}-\xi_{i}\Vert^{2}
\leq CE\vert\eta_{k,i}\vert^{2},\quad k,i\in\N^{+},
\end{aligned}
\end{equation}
where
$$
\eta_{k,i}=\int_{i-1}^{i} (A_{k}(t)-A(t))\Phi_{i-1,t}(\omega)\d t+\int_{i-1}^{i} (B_{k}(t)-B(t))\Phi_{i-1,t}(\omega)\d W.
$$
Note that
$$
\begin{aligned}
E\vert\eta_{k,i}\vert^{2}\leq& 2E\Big\vert\int_{i-1}^{i} (A_{k}(t)-A(t))\Phi_{i-1,t}(\omega)\d t\Big\vert^{2}+ 2E\Big\vert\int_{i-1}^{i} (B_{k}(t)-B(t))\Phi_{i-1,t}(\omega)\d W\Big\vert^{2}\\
\leq&2E\int_{i-1}^{i} \vert(A_{k}(t)-A(t))\Phi_{i-1,t}(\omega)\vert^{2}\d t+2E\int_{i-1}^{i} \vert(B_{k}(t)-B(t))\Phi_{i-1,t}(\omega)\vert^{2}\d t\\
\leq &2E\int_{[i-1,i]\setminus U_{k,i}} \vert(A_{k}(t)-A(t))\Phi_{i-1,t}(\omega)\vert^{2}\d t+2E\int_{[i-1,i]\setminus U_{k,i}} \vert(B_{k}(t)-B(t))\Phi_{i-1,t}(\omega)\vert^{2}\d t\\
&+2E\int_{U_{k,i}} \vert(A_{k}(t)-A(t))\Phi_{i-1,t}(\omega)\vert^{2}\d t+2E\int_{U_{k,i}} \vert(B_{k}(t)-B(t))\Phi_{i-1,t}(\omega)\vert^{2}\d t,\\
\end{aligned}
$$
where $U_{k,i}$ is defined in Property 1. We denote the first two terms of the above inequality as $\xi'_{k,i}$ and the last two terms as $\xi''_{k,i}$. By \eqref{equ.3.15} and Lemma \ref{lem.3.1},
\begin{align}\label{xi}
\limsup\limits_{n\rightarrow\infty}\dfrac{1}{n}\sum_{i=1}^{n}\big(E\Vert\xi_{k,i}-\xi_{i}\Vert^{2}\big)^{\frac{1}{2}}\big(E\Vert\xi_{k,i}^{-1}\Vert^{2}\big)^{\frac{1}{2}}
\leq C\limsup\limits_{n\rightarrow\infty}\dfrac{1}{n}\sum_{i=1}^{n}(\xi'_{k,i})^{\frac{1}{2}}+(\xi''_{k,i})^{\frac{1}{2}}.
\end{align}

According to the Property 1,
$$
\xi'_{k,i}=2E\int_{i-1}^{i} \vert(A'_{k}(t)-A(t))\Phi_{i-1,t}(\omega)\vert^{2}\d t+2E\int_{i-1}^{i} \vert(B'_{k}(t)-B(t))\Phi_{i-1,t}(\omega)\vert^{2}\d t.
$$
Given that $A'_{k}$ and $B'_{k}$ uniformly converge to $A$ and $B$, it is not difficult to get
$$
\lim_{k\rightarrow\infty}\limsup\limits_{n\rightarrow\infty}\dfrac{1}{n}\sum_{i=1}^{n}(\xi'_{k,i})^{\frac{1}{2}}=0.
$$
Furthermore, by the boundedness of $\vert A\vert,\vert B\vert,\vert A_{k}\vert,\vert B_{k}\vert$ and Lemma \ref{lem.3.1},
$$
\begin{aligned}
\xi''_{k,i}=& 2E\int_{U_{k,i}} \vert(A_{k}(t)-A(t))\Phi_{i-1,t}(\omega)\vert^{2}\d t+2E\int_{U_{k,i}} \vert(B_{k}(t)-B(t))\Phi_{i-1,t}(\omega)\vert^{2}\d t\\
\leq&16K^{2}\int_{U_{k,i}} E\vert\Phi_{i-1,t}(\omega)\vert^{2}\d t\\
\leq &C L(U_{k,i}),
\end{aligned}
$$
where $C$ is a constant dependent only on $d$. Therefore, by equality \eqref{equ.3.9},
\begin{align}
\lim_{k\rightarrow\infty}\limsup\limits_{n\rightarrow\infty}\dfrac{1}{n}\sum_{i=1}^{n}(\xi''_{k,i})^{\frac{1}{2}}
\leq  C\lim_{k\rightarrow\infty}\limsup\limits_{n\rightarrow\infty}\dfrac{1}{n}\sum_{i=1}^{n} (L(U_{k,i}))^{\frac{1}{2}}
=0.\notag
\end{align}
It follows from \eqref{equ.3.17} and \eqref{xi} that
$$
\lim_{k\rightarrow\infty}\limsup\limits_{n\rightarrow\infty}\dfrac{1}{n}\ln \dfrac{\Vert\Phi(n)  \Vert}{\Vert\Phi_{k}(n)  \Vert}\leq0.
$$

Similarly, we can get
$$
\lim_{k\rightarrow\infty}\limsup\limits_{n\rightarrow\infty}\dfrac{1}{n}\ln \dfrac{\Vert\Phi_{k}(n)  \Vert}{\Vert\Phi(n)  \Vert}\leq0.
$$
Therefore, we have by \eqref{equ.3.3}
$$
\lim_{k\rightarrow\infty}\vert\lambda_{1,k}-\lambda_{1}\vert=0.
$$
The proof is complete.
\end{proof}

\begin{remark}\rm
If the stochastic flows $\Phi_{s,t}(\omega)$ and $\Phi_{k,s,t}(\omega)$ are Lyapunov regular, which means that the limits \eqref{def-lya} (instead of upper limits) in the definition of Lyapunov exponents exist,
then the sum of $l$ largest Lyapunov exponents (counted with multiplicity) can be repressed as follows:
$$
\sum_{i=1}^{l}\lambda_{i}=\lim_{n\rightarrow \infty}\frac{1}{t}\ln\Vert \wedge^{l}\Phi_{0,t}(\omega)\Vert \qquad \P-a.s., \qquad 1\leq l\leq d.
$$
In this case, using the same method, we can prove that all Lyapunov exponents are continuous under the same condition.
\end{remark}

For a better understanding of the Property 1, consider an example that the sequence $\left\{A_{k}\right\}_{k\in\N^{+}}$ does not uniformly converge to $A$, but satisfies the Property 1, which implies that the top Lyapunov exponent $\lambda_{1,k}$ converges to $\lambda_{1}$. To simplify, we omit the step of smoothing the coefficients.

\begin{example}\rm
Consider the following $1$-dimensional linear  ODEs
$$
	\begin{aligned}
	\d X=A(t)X\d t,	\qquad \d X=A_{k}(t)X\d t,\quad k\in\N^{+},
	\end{aligned}
$$
where $A$ and $A_{k}$ are given by the formulas
$$
	A(t)=1,\quad t\in[0,+\infty),\quad
	\begin{aligned}
		A_{k}(t)=\left\{\begin{array}{ll}
		1,	& t\in [0,k),\\
			2,	& t\in [k,k+1),\\
		1,	& t\in [k+1,+\infty),
		\end{array}\right.
		\end{aligned}
\quad  k\in\N^{+}.
$$
Clearly, $A_{k}$ dose not converge to $A$ uniformly.
But the sequence $\left\{A_{k}\right\}_{k\in \N^{+}}$ satisfies the Property~1. And the top Lyapunov exponent is continuous:
$$
\lambda_{1,k}=\lambda_{1}=1 \quad \text{for all } k\in \N^{+}.
$$
\end{example}

 Next, we give another example that the top Lyapunov exponent is continuous, but the coefficients do not satisfy the Property 1. This example demonstrates that the condition in Theorem \ref{the.3.1} is a sufficient but not necessary condition. We think it is challenging to derive a sufficient and necessary condition.

\begin{example}\rm
Consider the $2$-dimensional linear diagonal equations
$$
\begin{aligned}
	&\d X=A(t)X\d t=\mathrm{diag}[a_{1}(t), a_{2}(t)]X\d t, \\
	&	\d X=A_{k}(t)X\d t=\mathrm{diag}[a_{k,1}(t), a_{k,2}(t)]X\d t, \quad k\in\N^{+}.
\end{aligned}
$$
The coefficients are defined by the following formulas:
\begin{align}
& a_{1}(t)=1, \quad t\in[0,+\infty),\quad  &&a_{2}(t)=6, \quad t\in[0,+\infty),\notag\\
	&a_{k,1}(t)=\left\{\begin{array}{ll}
	1,	& t\in [0,k),\\
	2,	 &t\in [k,+\infty),
\end{array}\right. \quad  &&a_{k,2}(t)=6, \quad t\in[0,+\infty),\quad  k\in\N^{+}.\notag
\end{align}
So we get the top Lyapunov exponents $\lambda_{1,k}=\lambda_{1}=6$ for all $k\in \N^{+}$. However, we observe that the sequence $\left\{A_{k}\right\}_{k\in \N^{+}}$ does not satisfy Property 1.
\end{example}

\section{Continuity of Lyapunov exponents for autonomous SDEs}\label{sec.4}
In this section, we consider the $d$-dimensional autonomous SDE
\begin{equation}\label{equ.4.7}
	\d X=A(X)\d t+B(X)\d W, \quad X(0)=x\in \R^{d},\\
\end{equation}
whose coefficients are approximated by sequences $\left\{A_{k}\right\}_{k\in \N^{+}}$ and $\left\{B_{k}\right\}_{k\in\N^{+}}$ respectively. They constitute the $d$-dimensional SDEs
	\begin{equation}\label{equ.4.8}
	\d X=A_{k}(X)\d t+B_{k}(X)\d W,\quad X_{k}(0)=x\in \R^{d}, \quad k\in\N^{+}.\\
\end{equation}
Denote solutions $\Phi(x,t)$ and $\Phi_{k}(x,t)$ for equations \eqref{equ.4.7} and \eqref{equ.4.8}, respectively. Our objective is to determine the conditions  under which Lyapunov exponents of the approximating systems \eqref{equ.4.8} converge to those of the original system \eqref{equ.4.7}.

As described in Proposition \ref{lem.2.2}, the derivative $\partial_{l} \Phi(x,t)$ of the solution to \eqref{equ.4.7} satisfies the following equation
\begin{equation}\label{equ.4.37}
	\partial_{l} \Phi(x,t)=e_{l}+\int_{0}^{t}DA(\Phi(x,s))\partial_{l} \Phi(x,s)\d s+\int_{0}^{t}DB(\Phi(x,s))\partial_{l} \Phi(x,s)\d W, \quad 1\leq l\leq d.
\end{equation}
As discussed in Kunita \cite[p. 228]{Kunita}, the inverse $(D\Phi)^{-1}(x,t)$ of the Jacobian $D\Phi(x,t)$ satisfies the following equation
$$
\begin{aligned}
(D \Phi)^{-1}(x,t)=I-&\int_{0}^{t}DA(\Phi(x,s))(D \Phi)^{-1}(x,s)\d s-\int_{0}^{t}DB(\Phi(x,s))(D \Phi)^{-1}(x,s)\d W\\
-&\int_{0}^{t}(DB(\Phi(x,s)))^{2}(D \Phi)^{-1}(x,s)\d s,
\end{aligned}
$$
where $I$ is the identity matrix.

In what follows, for simplicity constant $C$ may change from line to line. The following result is elementary, so we omit the proof.

\begin{lemma}\label{lem.4.2}
For equation \eqref{equ.4.7}, let coefficients $A,B$ belong to $ C^{1}(\R^{d})$, $\vert DA\vert,\vert DB\vert$ be bounded by a constant $K$.
Then for any $p \geq 2,x,y\in\R^{d}$ and $t\in[0,T]$ with $T\in[0,+\infty)$, the solution $\Phi(x,t)$ satisfies
$$
E\vert \Phi(x,t)-\Phi(y,t)\vert^{p}\leq C\vert x-y\vert^{p};
$$
the Jacobian $D\Phi=(\partial_{1}\Phi,\partial_{2}\Phi,\dots,\partial_{d}\Phi)$ and its inverse $(D\Phi)^{-1}$ exist and satisfy for $x\in\R^{d}$ and $t\in [0,T]$ that
$$
E\vert D\Phi(x,t)\vert^{p}\leq C, \quad E\vert (D \Phi)^{-1}(x,t)\vert^{p}\leq  C.
$$
Here $C=C(p,K,T)>0$ is a constant dependent only on $p,K,T$.
\end{lemma}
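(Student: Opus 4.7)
The plan is to prove each of the three estimates by a standard combination of the Burkholder--Davis--Gundy (BDG) inequality, Jensen/H\"older, and Gronwall's inequality. Since $A,B\in C^{1}(\R^{d})$ with $|DA|,|DB|\le K$, both $A$ and $B$ are globally Lipschitz with Lipschitz constant bounded by $C_{1}K$ (via \eqref{equ.2.4}); the linearized coefficients $DA(\Phi(x,s))$ and $DB(\Phi(x,s))$ are bounded by $K$ as well. This uniform boundedness is what drives every estimate.

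For the first inequality, I would subtract the mild-form equations for $\Phi(x,t)$ and $\Phi(y,t)$, take $p$-th moments, and use H\"older to pull out the time-integral and BDG for the stochastic integral:
\[
E|\Phi(x,t)-\Phi(y,t)|^{p}\le 3^{p-1}|x-y|^{p}+C_{p,K}\,t^{p-1}\int_{0}^{t}E|\Phi(x,s)-\Phi(y,s)|^{p}\,\d s+C_{p,K}\,t^{p/2-1}\int_{0}^{t}E|\Phi(x,s)-\Phi(y,s)|^{p}\,\d s.
\]
Gronwall's inequality on $[0,T]$ then yields $E|\Phi(x,t)-\Phi(y,t)|^{p}\le C(p,K,T)|x-y|^{p}$.

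For the second inequality, I would apply the same scheme to \eqref{equ.4.37}: taking $p$-th moments and using BDG together with the pointwise bounds $\|DA(\Phi(x,s))\|,\|DB(\Phi(x,s))\|\le C_{1}K$ gives
\[
E|\partial_{l}\Phi(x,t)|^{p}\le 3^{p-1}+C_{p,K}(t^{p-1}+t^{p/2-1})\int_{0}^{t}E|\partial_{l}\Phi(x,s)|^{p}\,\d s,
\]
and Gronwall delivers the uniform bound in $l$, hence a uniform bound on $E|D\Phi(x,t)|^{p}$ via the coordinate-wise matrix norm in \eqref{equ.2.4}. The third estimate is handled identically starting from the backward equation for $(D\Phi)^{-1}$ given just before the lemma: the extra Stratonovich-type correction term $(DB)^{2}(D\Phi)^{-1}$ is also linear in $(D\Phi)^{-1}$ with coefficient bounded by $K^{2}$, so the same Gronwall step applies.

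No genuine obstacle arises; the only point requiring care is the standard one of treating the martingale term with BDG (using $p\ge 2$ so that Jensen's inequality converts the quadratic variation into a time integral of the $p$-th moment of the integrand), and keeping track that the constant depends only on $p,K,T$ and not on $x,y$ or the specific component $l$. Once these three uniform bounds are in hand, the statement of Lemma \ref{lem.4.2} follows at once by combining coordinatewise estimates through the equivalence \eqref{equ.2.4}.
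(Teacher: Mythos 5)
Your proposal is correct and is precisely the standard argument (integral form of the equations, BDG plus H\"older/Jensen for $p\ge 2$, then Gronwall on $[0,T]$, applied in turn to the difference equation, the variational equation \eqref{equ.4.37}, and the backward equation for $(D\Phi)^{-1}$). The paper itself omits the proof as elementary, and what you wrote is exactly the omitted argument.
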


\begin{theorem}\label{the.4.1}
For equations \eqref{equ.4.7} and \eqref{equ.4.8}, let coefficients $A,B,A_{k},B_{k}$ belong to $ C^{1}(\R^{d})$, $DA,DB, DA_{k}, DB_{k}$ be bounded by a constant $K$ and admit the same modulus $m$ of continuity. Suppose that there exist ergodic measures $\mu$ and $\mu_{k}$ of equations \eqref{equ.4.7} and \eqref{equ.4.8} respectively and
$$
\mu_{k}\rightarrow \mu \qquad \text{as } k\rightarrow+\infty,
$$
under the $\text{weak}^{\ast}$-topology.
If
\begin{equation}\label{equ.4.17}
	\lim\limits_{k\rightarrow\infty}\vert A_{k}(x)-A(x)\vert+\vert B_{k}(x)-B(x)\vert=0 \quad \text{point-wise}
\end{equation}
and
\begin{equation}\label{equ.4.16}
	\lim\limits_{k\rightarrow\infty}\vert DA_{k}(x)-DA(x)\vert+\vert DB_{k}(x)-DB(x)\vert=0 \quad \text{point-wise},
\end{equation}
then for any $1\leq i\leq d$,
\begin{center}
	$\lim\limits_{k\rightarrow\infty}\lambda_{i,k}=\lambda_{i},$
\end{center}
where $\lambda_{i}$ and $\lambda_{i,k}$ are Lyapunov exponents corresponding to $\mu$ and $\mu_{k}$ respectively.
\end{theorem}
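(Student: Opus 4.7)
The plan is to reduce the statement to the Furstenberg-type identity \eqref{equ.2.5}. Since $\mu$ and $\mu_k$ are ergodic, the Oseledets numbers are constants, and \eqref{equ.2.5} together with Remark~\ref{rem.2.1} yields, for each $1\le l\le d$,
\[
\Lambda_{l}:=\sum_{i=1}^{l}\lambda_{i}=\int_{\R^{d}}\Psi^{(l)}(x)\,\d\mu(x),\qquad \Psi^{(l)}(x):=\int_{\Omega}\ln\|\wedge^{l}D\Phi(x,1,\omega)\|\,\d\P,
\]
and analogously $\Lambda_{l,k}=\int\Psi_{k}^{(l)}\d\mu_{k}$ with $\Psi_{k}^{(l)}$ defined via $\Phi_{k}$. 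Because $\lambda_{i}=\Lambda_{i}-\Lambda_{i-1}$ (with $\Lambda_{0}=0$), it suffices to prove $\Lambda_{l,k}\to\Lambda_{l}$ for every $l$. I will decompose
\[
\Lambda_{l,k}-\Lambda_{l}=\int(\Psi_{k}^{(l)}-\Psi^{(l)})\,\d\mu_{k}\;+\;\int\Psi^{(l)}\,\d(\mu_{k}-\mu)
\]
and show that both terms vanish as $k\to\infty$.

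The second term is handled by the $\text{weak}^{\ast}$-convergence $\mu_{k}\to\mu$, provided $\Psi^{(l)}\in C_{b}(\R^{d})$. Boundedness follows from Lemma~\ref{lem.4.2}: applying $\ln^{+}y\le y^{\alpha}/\alpha$ and $\ln^{-}\|\wedge^{l}M\|\le l\ln^{+}\|M^{-1}\|$ together with the bounds on $E\|D\Phi(x,1)\|^{p}$ and $E\|(D\Phi)^{-1}(x,1)\|^{p}$, which are uniform in $x$, gives $\|\Psi^{(l)}\|_{\infty}<\infty$. Continuity follows from the $C^{1,\alpha}$-regularity of $\Phi(\cdot,1,\omega)$ in Proposition~\ref{lem.2.2}, so that $D\Phi(x_{n},1,\omega)\to D\Phi(x,1,\omega)$ a.s.\ whenever $x_{n}\to x$; combined with the uniform $L^{p}$-bounds ($p>1$), the sequence $\ln\|\wedge^{l}D\Phi(x_{n},1,\omega)\|$ is uniformly integrable, and Vitali's theorem delivers the continuity of $\Psi^{(l)}$.

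For the first term, the weak convergence of $\mu_{k}$ on the Polish space $\R^{d}$ implies tightness by Prokhorov's theorem; fix $\varepsilon>0$ and a compact $K_{\varepsilon}\subset\R^{d}$ with $\sup_{k}\mu_{k}(\R^{d}\setminus K_{\varepsilon})<\varepsilon$. Since Lemma~\ref{lem.4.2} applies identically to both \eqref{equ.4.7} and \eqref{equ.4.8} with constants depending only on $p,K,T$, the bound $\|\Psi_{k}^{(l)}-\Psi^{(l)}\|_{\infty}\le C$ is uniform in $k$, so the integral over $\R^{d}\setminus K_{\varepsilon}$ is $O(\varepsilon)$. On $K_{\varepsilon}$ what is needed is uniform convergence $\Psi_{k}^{(l)}\to\Psi^{(l)}$. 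The common modulus $m$ and the uniform bound $K$ make $\{DA_{k}\},\{DB_{k}\}$ equicontinuous and uniformly bounded, so Arzelà--Ascoli upgrades \eqref{equ.4.16} to uniform convergence on every compact; similarly \eqref{equ.4.17} combined with the uniform Lipschitz constant $K$ yields uniform convergence of $A_{k}\to A$ and $B_{k}\to B$ on compacts. A standard Grönwall argument applied to the difference equation for $\Phi_{k}-\Phi$ (writing $A_{k}(\Phi_{k})-A(\Phi)=[A_{k}(\Phi_{k})-A_{k}(\Phi)]+[A_{k}(\Phi)-A(\Phi)]$, handling the first bracket by Lipschitzness and the second via uniform convergence on the range of $\Phi$, controlled by the moment bound $E|\Phi(x,s)|^{p}\le C$) then gives $\sup_{x\in K_{\varepsilon}}E|\Phi_{k}(x,1)-\Phi(x,1)|^{2}\to 0$.

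The main obstacle is transferring this to the Jacobians. The variational equation \eqref{equ.4.37} for $D\Phi_{k}$ involves $DA_{k}(\Phi_{k}(x,s))$, which differs from $DA(\Phi(x,s))$ at two levels simultaneously: the matrix field and the base flow. I will split $DA_{k}(\Phi_{k})-DA(\Phi)=[DA_{k}(\Phi_{k})-DA(\Phi_{k})]+[DA(\Phi_{k})-DA(\Phi)]$; the first bracket is controlled via the uniform convergence $DA_{k}\to DA$ on compacts together with the tightness of the law of $\Phi_{k}(x,s)$ (a consequence of the uniform moment bound), while the second bracket is controlled by the common modulus $m$ applied to the already-estimated distance $|\Phi_{k}(x,s)-\Phi(x,s)|$. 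A Grönwall plus dominated-convergence argument, uniform in $x\in K_{\varepsilon}$, then yields $\sup_{x\in K_{\varepsilon}}E|D\Phi_{k}(x,1)-D\Phi(x,1)|^{2}\to 0$, and a final uniform-integrability argument (as in the continuity proof of $\Psi^{(l)}$, using the uniform $L^{p}$-bounds for both $D\Phi_{k}$ and $(D\Phi_{k})^{-1}$) passes from $L^{2}$-convergence of the Jacobians to the desired uniform convergence $\Psi_{k}^{(l)}\to\Psi^{(l)}$ on $K_{\varepsilon}$, completing the proof.
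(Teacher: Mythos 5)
Your proposal is correct and follows the same skeleton as the paper's proof: the Furstenberg formula \eqref{equ.2.5} plus Remark~\ref{rem.2.1}, the two-term decomposition into an integrand difference against $\mu_k$ and a measure difference against a fixed function, weak$^{\ast}$-convergence for the latter, and $L^{2}$-convergence of the Jacobians via the variational equation \eqref{equ.4.37} for the former, with the exterior-power estimate $\ln\Vert\wedge^{l}D\Phi\Vert-\ln\Vert\wedge^{l}D\Phi_k\Vert\le l\Vert D\Phi-D\Phi_k\Vert\,\Vert(D\Phi_k)^{-1}\Vert$ closing the argument. The genuine divergence is in how the measure transfer is executed. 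The paper works with $\phi_k(x)=E\vert D\Phi_k(x)-D\Phi(x)\vert^{2}$, proves equicontinuity and uniform boundedness of $\{\phi_k\}$, and uses Arzel\`a--Ascoli with a subsequence extraction to show $\int\phi_k\,\d(\mu_k-\mu)\to 0$, reducing everything to $\int\phi_k\,\d\mu\to 0$, which is then proved by the splitting $DA_k(\Phi_k)-DA(\Phi)=[DA_k(\Phi_k)-DA_k(\Phi)]+[DA_k(\Phi)-DA(\Phi)]$ and dominated convergence under $\mu$. You instead invoke tightness of $\{\mu_k\}$ via Prokhorov, upgrade the pointwise hypotheses \eqref{equ.4.17}--\eqref{equ.4.16} to uniform convergence on compacts by Arzel\`a--Ascoli applied to the coefficients themselves (using the common modulus $m$ and bound $K$), and then prove $\sup_{x\in K_{\varepsilon}}E\vert D\Phi_k(x,1)-D\Phi(x,1)\vert^{2}\to 0$ directly, with the complementary split $[DA_k(\Phi_k)-DA(\Phi_k)]+[DA(\Phi_k)-DA(\Phi)]$. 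Your route buys a cleaner, subsequence-free argument and makes explicit two points the paper glosses over (the continuity of $x\mapsto E\ln\Vert\wedge^{l}D\Phi(x,1)\Vert$ needed to apply weak$^{\ast}$-convergence, and the control of the coefficient differences on the event that the flow leaves a large ball, for which you correctly fall back on the uniform moment bounds); the paper's route avoids needing uniform convergence of the flows over compacta, working only with pointwise-in-$x$ limits under the fixed measure $\mu$. Both are sound; just make sure, when you finalize the Gr\"onwall step for $\Phi_k-\Phi$, that you record the uniform linear-growth bound $\vert A_k(y)\vert\le\vert A_k(0)\vert+K\vert y\vert$ (with $\vert A_k(0)\vert$ bounded by \eqref{equ.4.17}) so that the moment estimates $E\vert\Phi_k(x,s)\vert^{p}\le C(1+\vert x\vert^{p})$ hold uniformly in $k$, since Lemma~\ref{lem.4.2} as stated only bounds increments and Jacobians.
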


\begin{proof}
For simplicity, we identify $X$ as $\R^{d}$ and denote $\Phi(x,\omega)=\Phi(x,1,\omega)$ and $\Phi_{k}(x,\omega)=\Phi_{k}(x,1,\omega)$, respectively.
By \eqref{equ.2.5} and Remark \ref{rem.2.1}, we know that
 $$\sum_{i=1}^{l}\lambda_{i}=\int_{X}\int_{\Omega}\ln\Vert\wedge^{l} (D\Phi(x,\omega))\Vert\d\P\d\mu, \quad \sum_{i=1}^{l}\lambda_{i,k}=\int_{X}\int_{\Omega}\ln\Vert\wedge^{l} (D\Phi_{k}(x,\omega))\Vert\d\P\d\mu_{k}.$$
Then we have
\begin{align}\label{term.4}
&\vert\sum_{i=1}^{l}\lambda_{i,k}-\sum_{i=1}^{l}\lambda_{i}\vert\notag\\
=&\Big\vert\int_{X}\int_{\Omega}\ln\Vert\wedge^{l} D\Phi_{k}(x)\Vert\d\P\d\mu_{k}-\int_{X}\int_{\Omega}\ln\Vert\wedge^{l} D\Phi(x)\Vert\d\P\d\mu\Big\vert\notag\\
	\leq&\Big\vert\int_{X}\int_{\Omega}\ln\Vert\wedge^{l} D\Phi_{k}(x)\Vert\d \P\d\mu_{k}-\int_{X}\int_{\Omega}\ln\Vert\wedge^{l} D\Phi(x)\Vert\d \P\d\mu_{k}\Big\vert\\
	&+\Big\vert
\int_{X}\int_{\Omega}\ln\Vert\wedge^{l} D\Phi(x)\Vert\d \P\d\mu_{k}-\int_{X}\int_{\Omega}\ln\Vert \wedge^{l}D\Phi(x)\Vert\d \P\d\mu\Big\vert.\notag
\end{align}
By Lemma \ref{lem.4.2}, there exists some constant $C$ such that for any $x\in X$,
$$
\big\vert E\ln\Vert\wedge^{l} D\Phi(x)\Vert\big\vert\leq C.
$$
Since $\mu_{k}$ converges to $\mu$ under the $\text{weak}^{\ast}$-topology, the second term on the right-hand side of \eqref{term.4}
converges to zero as $k\rightarrow\infty$. So to complete the proof, we only need to show that the first term of \eqref{term.4}
goes to zero.

For the first term of \eqref{term.4}, note that
\begin{equation}\label{equ.4.4}
\begin{aligned}
&\Big\vert	\int_{X}\int_{\Omega}\ln\Vert\wedge^{l} D\Phi_{k}(x)\Vert\d \P\d\mu_{k}-	\int_{X}\int_{\Omega}\ln\Vert\wedge^{l} D\Phi(x)\Vert\d \P\d\mu_{k}\Big\vert\\
\leq & \max\Big\{\int_{X}\int_{\Omega}\ln\Vert\wedge^{l} D\Phi(x)\Vert\d \P\d\mu_{k}-	\int_{X}\int_{\Omega}\ln\Vert\wedge^{l} D\Phi_{k}(x)\Vert\d \P\d\mu_{k},\\
&
\qquad\qquad\qquad\int_{X}\int_{\Omega}\ln\Vert\wedge^{l} D\Phi_{k}(x)\Vert\d \P\d\mu_{k}-	\int_{X}\int_{\Omega}\ln\Vert\wedge^{l} D\Phi(x)\Vert\d \P\d\mu_{k}\Big\}.
\end{aligned}
\end{equation}
Consider the first one:
\begin{align}\label{equ.4.9}
&	\int_{X}\int_{\Omega}\ln\Vert\wedge^{l} D\Phi\Vert\d \P\d\mu_{k}-	\int_{X}\int_{\Omega}\ln\Vert\wedge^{l} D\Phi_{k}\Vert\d \P\d\mu_{k}\notag\\
\leq&\int_{X}\int_{\Omega}\ln\Vert\wedge^{l}( D\Phi(D\Phi_{k})^{-1})\Vert\d \P\d\mu_{k}\notag\\
\leq &l\times\int_{X}\int_{\Omega}\ln\Vert D\Phi(D\Phi_{k})^{-1}\Vert\d \P\d\mu_{k}\notag\\
= &l \times\int_{X}\int_{\Omega}\ln\Vert [D\Phi-D\Phi_{k}+D\Phi_{k}](D\Phi_{k})^{-1}\Vert\d \P\d\mu_{k}\notag\\
\leq &l \times\int_{X}\int_{\Omega}\Vert [D\Phi_{k}-D\Phi](D\Phi_{k})^{-1}\Vert\d\P \d\mu_{k}\notag\\
\leq &C\times \Big( \int_{X}\int_{\Omega}\vert D\Phi_{k}-D\Phi\vert^{2}\d \P\d\mu_{k}\Big)^{\frac{1}{2}}\times \Big( \int_{X}\int_{\Omega}\Vert (D\Phi_{k})^{-1}\Vert^{2}\d \P\d\mu_{k}\Big)^{\frac{1}{2}}
\end{align}
holds by \eqref{equ.2.6} and \eqref{equ.2.4}.

Next, in order to prove that
\begin{equation}\label{equ.4.26}
\lim\limits_{k\rightarrow+\infty} \Big\vert\int_{X}\int_{\Omega}\vert D\Phi_{k}-D\Phi\vert^{2}\d \P\d\mu_{k}-\int_{X}\int_{\Omega}\vert D\Phi_{k}-D\Phi\vert^{2}\d \P\d\mu\Big\vert=0,
\end{equation}
we denote
$$
\phi_{k}(x) :=E\vert D\Phi_{k}(x)-D\Phi(x)\vert^{2}.
$$
For any $\epsilon>0$, there exists a compact subset $X_{\epsilon}\subset X$ such that $\mu(X\setminus X_{\epsilon})<\epsilon$.
Choose a subsequence $\left\{{k_{l}}\right\}_{l\in\N^{+}}$ such that
\begin{equation}\label{subseq}
\limsup_{k\rightarrow\infty}\int_{ X_{\epsilon}}\phi_{k}\d(\mu_{k}-\mu)=\lim\limits_{l\rightarrow\infty}\int_{ X_{\epsilon}}\phi_{k_{l}}\d(\mu_{k_{l}}-\mu).
\end{equation}
On the one hand, note that for any $x,y\in X$,
\begin{align}\label{equ.4.12}
\vert\phi_{k}(x)-\phi_{k}(y)\vert \leq &  E\Big(\vert D\Phi_{k}(x)-D\Phi(x)-D\Phi_{k}(y)+D\Phi(y)\vert\notag\\
&\qquad\times \big[\vert D\Phi_{k}(x)-D\Phi(x)+ D\Phi_{k}(y)-D\Phi(y)\vert\big] \Big)\notag\\
\leq &\big(E\vert D\Phi_{k}(x)-D\Phi(x)-D\Phi_{k}(y)+D\Phi(y)\vert^{2}\big)^{\frac{1}{2}}\notag\\
&\qquad\times \sqrt{2}\big( E\vert D\Phi_{k}(x)-D\Phi(x)\vert^{2}+E\vert D\Phi_{k}(y)-D\Phi(y)\vert^{2}\big)^{\frac{1}{2}}\notag\\
\leq&2\sqrt{2C}\big(E\vert D\Phi_{k}(x)-D\Phi_{k}(y)\vert^{2}+E\vert D\Phi(x)-D\Phi(y)\vert^{2}\big)^{\frac{1}{2}},
\end{align}
where for any $x\in X$,
\begin{equation}\label{equ.4.19}
E\vert D\Phi_{k}(x)-D\Phi(x)\vert^{2}\leq 2E\vert D\Phi_{k}(x)\vert^{2}+2E\vert D\Phi(x)\vert^{2}\leq C
\end{equation}
by Lemma \ref{lem.4.2}. $D\Phi$ is a matrix-valued function which can be expressed by $(\partial_{1}\Phi,\partial_{2}\Phi,\dots,\partial_{d}\Phi)$, where $\partial_{l}\Phi$ satisfies equation \eqref{equ.4.37} for any $1\leq l\leq d$. By the method stated in Friedman \cite[p. 119]{Friedman}, notice that
\begin{equation}\label{equ.4.23}
E\vert \partial_{l}\Phi_{k}(x)-\partial_{l}\Phi_{k}(y)\vert^{2}\leq C E\vert\eta_{k}(1)\vert^{2},
\end{equation}
where
$$\begin{aligned}
\eta_{k}(1)=&\int_{0}^{1}[DA_{k}(\Phi_{k}(x,t))-DA_{k}(\Phi_{k}(y,t))]\partial_{l}\Phi_{k}(y,t)\d t \\
&+\int_{0}^{1}[DB_{k}(\Phi_{k}(x,t))-DB_{k}(\Phi_{k}(y,t))]\partial_{l}\Phi_{k}(y,t)\d W.
\end{aligned}
$$
By the Cauchy-Schwarz inequality,  It\^o's isometry and Lemma \ref{lem.4.2}, we get
\begin{equation}\label{equ.4.15}
\begin{aligned}	
E\vert\eta_{k}(1)\vert^{2}\leq&C\Big[\big(E\int_{0}^{1}\vert DA_{k}(\Phi_{k}(x,t))-DA_{k}(\Phi_{k}(y,t))\vert^{4}\d t\big)^{\frac{1}{2}}\\
&+\big(E\int_{0}^{1}\vert DB_{k}(\Phi_{k}(x,t))-DB_{k}(\Phi_{k}(y,t))\vert^{4}\d t\big)^{\frac{1}{2}}\Big].
\end{aligned}
\end{equation}
The inequality, that for any $\epsilon>0$, $k\in\N^{+}$ and $t\in[0,1]$,
$$
\begin{aligned}
\P(\vert\Phi_{k}(x,t)-\Phi_{k}(y,t)\vert>\epsilon)\leq& \dfrac{1}{\epsilon^{2}}E\vert \Phi_{k}(x,t)-\Phi_{k}(y,t)\vert^{2}
\leq\dfrac{C}{\epsilon^{2}}\vert x-y\vert^{2}
\end{aligned}
$$
holds by the Chebyshev inequality and Lemma~\ref{lem.4.2}, where $C$ is a constant independent of $k$. Along with $\lim_{x\rightarrow 0}m(x)=0$, it follows that for above $\epsilon$, there exists some constant $\delta>0$ such that when $\vert x-y\vert<\delta$, there exists a subset $\Omega_{k,t,\epsilon}\subset \Omega$ satisfying   $\P(\Omega\setminus\Omega_{k,t,\epsilon})<\epsilon$ on which
$$
m\big(\vert \Phi_{k}(x,t,\omega)-\Phi_{k}(y,t,\omega)\vert\big)<\epsilon.
$$
Then we get
\begin{align}
&E\int_{0}^{1}\vert DA_{k}(\Phi_{k}(x,t,\omega))-DA_{k}(\Phi_{k}(y,t,\omega))\vert^{4}\d t\notag\\
\leq&\int_{0}^{1} \int_{\Omega_{k,t,\epsilon}}\big[m\big(\vert \Phi_{k}(x,t,\omega)-\Phi_{k}(y,t,\omega)\vert\big)\big]^{4}\d\P\d t\notag\\
&\qquad+\int_{0}^{1}\int_{\Omega\setminus\Omega_{k,t,\epsilon}}\vert DA_{k}(\Phi_{k}(x,t,\omega))-DA_{k}(\Phi_{k}(y,t,\omega))\vert^{4}\d\P\d t\notag\\
<& \epsilon^{4}+(2K)^{4}\epsilon,\notag
\end{align}
and similarly
\[
E\int_{0}^{1}\vert DB_{k}(\Phi_{k}(x,t))-DB_{k}(\Phi_{k}(y,t))\vert^{4}\d t \leq \epsilon^{4}+(2K)^{4}\epsilon.
\]
By \eqref{equ.4.23} and \eqref{equ.4.15}, we know that for any $\epsilon'>0$, there exists $\delta>0$ such that when $\vert x-y\vert<\delta$, the first term of \eqref{equ.4.12} satisfies
$$
E\vert D\Phi_{k}(x)-D\Phi_{k}(y)\vert^{2}< \epsilon'\quad \hbox{for all } k\in\N^+.
$$
By applying the same method, we can also show that the second term has a similar estimate.
So $\phi_{k},k\in\N^{+}$ are equicontinuous on $X$.

On the other hand, by Lemma \ref{lem.4.2}, $\phi_{k}(x),k\in\N^{+}$ are uniformly bounded. In terms of the Arzel\`a-Ascoli theorem, we can extract a sub-subsequence uniformly converging to $0$ on the compact subset $X_{\epsilon}\subset X$. Relabeling the original subsequence
in \eqref{subseq} if necessary, we get
$$
\limsup_{k\rightarrow\infty}\int_{ X_{\epsilon}}\phi_{k}\d(\mu_{k}-\mu)=\lim\limits_{l\rightarrow\infty}\int_{ X_{\epsilon}}\phi_{k_{l}}\d(\mu_{k_{l}}-\mu)=0.
$$
And by the same method,
$$
\limsup_{k\rightarrow\infty}\int_{ X_{\epsilon}}\phi_{k}\d(\mu-\mu_{k})=0.
$$
That is,
$$\lim_{k\rightarrow\infty} \int_{ X_{\epsilon}}\phi_{k}\d(\mu_{k}-\mu)=0.$$
So when $k$ is large enough,
$$
\begin{aligned}
&\Big\vert\int_{X}\phi_k(x)\d\mu_{k}-\int_{X}\phi_k(x)\d\mu\Big\vert \\
\leq &\Big\vert\int_{ X_{\epsilon}}\phi_{k}\d(\mu_{k}-\mu)\Big\vert+\Big\vert\int_{X\setminus X_{\epsilon}}\phi_k \d\mu_{k}-\int_{X\setminus X_{\epsilon}}\phi_k \d\mu\Big\vert\\
< & \epsilon+2C\epsilon
\end{aligned}
$$
holds by \eqref{equ.4.19} and the fact that $\mu_{k}\rightarrow\mu$ under the $\text{weak}^{\ast}$-topology. So far, we get the desired result \eqref{equ.4.26}.

Note that by the method stated in Friedman \cite[p. 119]{Friedman}, for any $1\leq l\leq d$, there exists a constant $C=C(K)$ dependent on $K$ such that
\begin{equation}\label{equ.4.14}
\begin{aligned}
&\int_{X}E\vert \partial_{l}\Phi_{k}(x)-\partial_{l}\Phi(x)\vert^{2}\d\mu\\
\leq& C\int_{X}E\Big\vert\int_{0}^{1}[DA_{k}(\Phi_{k}(x,t))-DA(\Phi(x,t))]\partial_{l}\Phi(x,t)\d t\Big\vert^{2}\d\mu\\
& +C\int_{X}E\Big\vert\int_{0}^{1}[DB_{k}(\Phi_{k}(x,t))-DB(\Phi(x,t))]\partial_{l}\Phi(x,t)\d W\Big\vert^{2}\d\mu.
\end{aligned}
\end{equation}
For the first term on the right-hand side of \eqref{equ.4.14}, the following estimate holds by the H\"older inequality:
\begin{align}\label{equ.4.13}
&\int_{X}	E\Big\vert\int_{0}^{1}[DA_{k}(\Phi_{k}(x,t))-DA(\Phi(x,t))]\partial_{l}\Phi(x,t)\d t\Big\vert^{2}\d\mu\notag \\
&\leq 2\int_{X}\int_{0}^{1}E\big\vert[DA_{k}(\Phi_{k}(x,t))-DA_{k}(\Phi(x,t))]\partial_{l}\Phi(x,t)\big\vert^{2} \d t\d\mu\notag \\
&\quad+2\int_{X}\int_{0}^{1}E\big\vert[DA_{k}(\Phi(x,t))-DA(\Phi(x,t))]\partial_{l}\Phi(x,t)\big\vert^{2} \d t\d\mu\notag \\
&\leq 2\Big( \int_{X}\int_{0}^{1}E\vert  DA_{k}(\Phi_{k}(x,t))-DA_{k}(\Phi(x,t))\vert^{p} \d t\d\mu\Big) ^{\frac{2}{p}}\cdot\Big(\int_{X} \int_{0}^{1}E\vert \partial_{l}\Phi(x,t)\vert^{q} \d t\d\mu\Big) ^{\frac{2}{q}}\\
&\quad+2\Big( \int_{X}\int_{0}^{1}E\vert DA_{k}(\Phi(x,t))-DA(\Phi(x,t))\vert^{p} \d t\d\mu\Big) ^{\frac{2}{p}}\cdot\Big(\int_{X} \int_{0}^{1}E\vert \partial_{l}\Phi(x,t) \vert^{q} \d t\d\mu\Big) ^{\frac{2}{q}},\notag
\end{align}
where $p>2$ and $\frac{2}{p}+\frac{2}{q}=1$. The Lebesgue dominated convergence theorem implies that
\begin{equation}\label{equ.4.1}
\begin{aligned}
&\lim\limits_{k\rightarrow\infty}\int_{X}\int_{0}^{1}E\vert DA_{k}(\Phi_{k}(x,t))-DA_{k}(\Phi(x,t))\vert^{p} \d t\d\mu\\	=&\int_{X}\int_{0}^{1}\lim_{k\rightarrow\infty}E\vert DA_{k}(\Phi_{k}(x,t))-DA_{k}(\Phi(x,t))\vert^{p} \d t\d\mu.
\end{aligned}
\end{equation}
Due to \eqref{equ.4.17}, for any $x\in X$ and $t\in[0,1]$,
$\Phi_{k}(x,t)$ converges to $\Phi(x,t)$ in probability, as $k\rightarrow\infty$. So for any $\epsilon>0$, there exists a constant $K_{1}$ such that for any $k>K_{1}$ we can find a subset $\Omega_{k,\epsilon}\subset \Omega$ satisfying $\P(\Omega\setminus\Omega_{k,\epsilon})<\frac{\epsilon}{2^{p+1}K^{p}}$ on which
$$
m(\vert\Phi_{k}(x,t,\omega)-\Phi(x,t,\omega)\vert)<(\dfrac{\epsilon}{2})^{\frac{1}{p}}.
$$
It follows that
\begin{align}
	&E\big\vert DA_{k}(\Phi_{k}(x,t))-DA_{k}(\Phi(x,t))\big\vert^{p}\notag\\
	= &\int_{\Omega_{k,\epsilon}}\big\vert DA_{k}(\Phi_{k}(x,t))-DA_{k}(\Phi(x,t))\big\vert^{p}\d \P+\int_{\Omega\setminus\Omega_{k,\epsilon}}\big\vert DA_{k}(\Phi_{k}(x,t))-DA_{k}(\Phi(x,t))\big\vert^{p}\d \P\notag\\
		< & \epsilon.\notag
\end{align}
Hence the first term on the right-hand side of \eqref{equ.4.13} converges to $0$ as $k\rightarrow\infty$ by \eqref{equ.4.1} and Lemma \ref{lem.4.2}.
The second term also converges to $0$ as $k\rightarrow\infty$ by the boundedness of $DA$ and $DA_{k}$, condition \eqref{equ.4.16} and the Lebesgue dominated convergence theorem. Therefore, the first term of \eqref{equ.4.14} converges to $0$ as $k\rightarrow\infty$. The second term has a similar estimate, so we get
\begin{equation}\label{equ.4.18}
\lim_{k\rightarrow\infty} \int_{X}\int_{\Omega}\vert D\Phi_{k}-D\Phi\vert^{2}\d \P\d\mu=0.
\end{equation}

By $\Vert A\Vert^{-1}\leq \Vert A^{-1}\Vert$ for any invertible matrix $A$ and Lemma \ref{lem.4.2},
$$\int_{X}\int_{\Omega}\Vert D\Phi_{k}\Vert^{-2}\d \P\d\mu_{k}\leq\int_{X}\int_{\Omega}\Vert (D\Phi_{k})^{-1}\Vert^{2}\d \P\d\mu_{k}< +\infty.$$
According to \eqref{equ.4.9}, \eqref{equ.4.26} and \eqref{equ.4.18},
$$
\lim_{k\rightarrow\infty}\int_{X}\int_{\Omega}\ln\Vert\wedge^{l} (D\Phi)\Vert\d \P\d\mu_{k}-	\int_{X}\int_{\Omega}\ln\Vert\wedge^{l} (D\Phi_{k})\Vert\d \P\d\mu_{k}\leq 0.
$$
By the similar method,
$$
\lim_{k\rightarrow\infty}\int_{X}\int_{\Omega}\ln\Vert\wedge^{l} (D\Phi_{k})\Vert\d \P\d\mu_{k}-	\int_{X}\int_{\Omega}\ln\Vert\wedge^{l} (D\Phi)\Vert\d \P\d\mu_{k}\leq 0.
$$
Due to \eqref{equ.4.4}, the first term of inequality \eqref{term.4} converges to $0$, as $k\rightarrow\infty$.  The proof is complete.
\end{proof}

We now give some sufficient conditions to guarantee the continuity of invariant measures with respect to coefficients. Initially, we introduce the notion of uniform Lyapunov functions from Huang et al \cite{Huang1} and \cite{Huang2}.
A function $V:\R^{d}\to\R$ is called \textit{Lyapunov function} with respect to equation \eqref{equ.4.7} if $V$ is a nonnegative $C^{2}$ function such that
$$
\lim_{x\rightarrow\infty}V(x)=+\infty,
$$
and for a bounded open set $U\subset \R^{d}$ with regular boundary, there exists some constant $\gamma>0$ satisfying
$$
\mathcal{L}V (x)\leq-\gamma \quad \hbox{for } x\in \R^{d}\setminus U,
$$
where $\mathcal{L}$ is the adjoint Fokker--Planck operator generated by equation \eqref{equ.4.7}. Thereby, a $C^{2}$ function is a \textit{uniform Lyapunov function} in $\R^{d}$ with respect to equations \eqref{equ.4.8} if it is a Lyapunov function with respect to each equation \eqref{equ.4.8}, and $\gamma$ and $U$ are independent of $k$. Then we have the following continuity result on invariant measures.

\begin{prop}\label{pro.4.2}
For equations \eqref{equ.4.7} and \eqref{equ.4.8}, suppose that coefficients $A,B,A_{k},B_{k}$ are locally Lipschitz continuous, there is a uniform Lyapunov function and $B(x)B^{\intercal}(x), B_{k}(x)B_{k}^{\intercal}(x),$ $x\in U$ are invertible.  Then there exist unique invariant measures $\mu$ and $\mu_{k}$ of equations \eqref{equ.4.7} and \eqref{equ.4.8}, respectively. Furthermore, if
$$
\lim\limits_{k\rightarrow\infty}\vert A_{k}(x)-A(x)\vert+\vert B_{k}(x)-B(x)\vert=0\quad \text{point-wise},
$$
then $\mu_{k}$ converges to $\mu$
under the $\text{weak}^{\ast}$-topology.
\end{prop}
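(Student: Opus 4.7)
The plan is to split the claim into two parts: (i) existence and uniqueness of $\mu,\mu_k$, and (ii) the weak$^{\ast}$ convergence $\mu_k\to\mu$. For (i) I would invoke the framework of Huang et al.~\cite{Huang1,Huang2}: the Lyapunov function provides Krylov--Bogolyubov tightness and hence existence of at least one invariant measure for each equation, while the non-degeneracy of $BB^{\intercal},B_k B_k^{\intercal}$ on $U$ together with local Lipschitz regularity of the coefficients yields the strong Feller property and irreducibility on the recurrent set, producing uniqueness by Doob's theorem. The substantive work is (ii).

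First I would establish uniform tightness of $\{\mu_k\}_{k\in\N^+}$. Since $V$ is a \emph{uniform} Lyapunov function (the constant $\gamma$ and set $U$ are common to all $k$), the invariance identity $\int \mathcal{L}_k V\,d\mu_k=0$ (justified after a truncation/stopping argument if $V$ is not globally integrable) combined with $\mathcal{L}_k V\le -\gamma$ outside $U$ and a $k$-uniform upper bound for $\mathcal{L}_k V$ on the compact $\bar U$ (obtained from local Lipschitz bounds and the pointwise convergence $A_k\to A$, $B_k\to B$ on $\bar U$) yields $\mu_k(\R^d\setminus U)\le C/\gamma$ with $C$ independent of $k$. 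Applying the same identity to level truncations of $V$ gives $\sup_k\mu_k(\{V\ge R\})\to 0$ as $R\to\infty$, and tightness then follows from $V(x)\to+\infty$.

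Next, I would take any subsequence $\mu_{k_l}$ converging weakly to some $\tilde\mu$ and try to identify $\tilde\mu$ as an invariant measure of \eqref{equ.4.7}; uniqueness from (i) then forces $\tilde\mu=\mu$, and since every weakly convergent subsequence of $\mu_k$ must have the same limit, the full sequence converges, $\mu_k\to\mu$. Invariance of $\tilde\mu$ amounts to $\int P_t f\,d\tilde\mu=\int f\,d\tilde\mu$ for every $f\in C_b(\R^d)$ and $t\ge 0$. I would deduce this by passing to the limit in $\int P_t^{k_l}f\,d\mu_{k_l}=\int f\,d\mu_{k_l}$: pointwise convergence of the coefficients together with local Lipschitz regularity gives $\Phi_k(x,t)\to\Phi(x,t)$ in probability by the standard continuous-dependence theorem for SDEs, hence $P_t^k f(x)\to P_t f(x)$ pointwise for $f\in C_b$.

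The main obstacle is precisely this last passage to the limit: weak$^{\ast}$ convergence tests only continuous bounded integrands, so $P_t^{k_l}f\to P_t f$ must hold in a stronger mode than pointwise---locally uniform convergence together with a uniform bound is enough when combined with the tightness from the previous step. This in turn reduces to a $k$-uniform equicontinuity estimate for $\{P_t^k f\}_k$ on each compact, which demands $k$-uniform control of $E|\Phi_k(x,t)-\Phi_k(y,t)|$ as a function of $|x-y|$. Establishing such a modulus---using $k$-uniform local Lipschitz bounds on $A_k,B_k$ inherent to the Huang et al.~framework---is the key technical hurdle; once it is in place, Arzel\`a--Ascoli on compacts combined with the uniform tightness closes the argument.
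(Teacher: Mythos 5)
Your proposal is correct in outline, but it is worth saying up front that the paper does not actually prove this proposition: its entire ``proof'' consists of two citations (Huang et al.\ for existence and uniqueness, Ma--Liu for the weak$^{\ast}$ convergence). What you have written is essentially a reconstruction of the argument those references carry out: existence/uniqueness from the Lyapunov function plus non-degeneracy on $U$, uniform tightness of $\{\mu_k\}$ from the \emph{uniform} Lyapunov condition, identification of every subsequential weak limit as an invariant measure of \eqref{equ.4.7} by passing to the limit in $\int P_t^{k_l}f\,\d\mu_{k_l}=\int f\,\d\mu_{k_l}$, and then Prokhorov plus uniqueness to upgrade to convergence of the full sequence. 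This is the standard route and buys the reader an actual argument where the paper offers none.

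Two steps of your sketch carry almost all of the technical weight and are asserted more confidently than they deserve. First, tightness: the identity $\int\mathcal{L}_kV\,\d\mu_k=0$ together with $\mathcal{L}_kV\le-\gamma$ off $U$ and a $k$-uniform bound on $\mathcal{L}_kV$ over $\bar U$ yields only the single estimate $\mu_k(\R^{d}\setminus U)\le C/\gamma$, which does not shrink as the reference compact grows; and ``applying the same identity to level truncations of $V$'' is not innocuous, since truncating $V$ at level $R$ produces second-order terms of the wrong sign near the truncation level. Getting $\sup_k\mu_k(\{V\ge R\})\to0$ genuinely requires the level-set measure estimates of Huang et al.\ (or a strengthened condition of the form $\mathcal{L}_kV\le-\gamma W$ with $W\to\infty$); as written this step is a gap, albeit one that the paper itself papers over by citation. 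Second, the locally uniform convergence $P_t^{k}f\to P_tf$: pointwise convergence of coefficients that are only locally Lipschitz does not by itself give a $k$-uniform modulus of continuity for $x\mapsto\Phi_k(x,t)$ on compacts, nor non-explosion uniformly in $k$; you correctly flag this as the key hurdle, but the hypotheses of the proposition as stated do not visibly supply the uniform local Lipschitz constants you need, and (a smaller point) with noise degenerate outside $U$ the strong Feller/Doob mechanism for uniqueness must be replaced by the Fokker--Planck regularity argument of the references. Neither issue is a wrong turn --- both are precisely what the cited works are invoked to handle --- but they are the places where your blind proof would have to be filled in before it stands on its own.
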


\begin{proof}
Based on Huang et al \cite{Huang1}, there exist unique invariant measures $\mu$ and $\mu_{k}$ for equations \eqref{equ.4.7} and \eqref{equ.4.8}, respectively. Furthermore, through Ma and Liu \cite{LM}, it can be shown that $\mu_{k}$ converges to $\mu$ under the $\text{weak}^{\ast}$-topology.
\end{proof}

By combining Theorem \ref{the.4.1} and Proposition \ref{pro.4.2}, we can derive the following conclusion regarding the continuity of Lyapunov exponents with respect to the coefficients.

\begin{theorem}\label{the.4.3}
For equations \eqref{equ.4.7} and \eqref{equ.4.8}, let coefficients $A,B,A_{k},B_{k}$ belong to $C^{1}(\R^{d})$, $DA,DB, DA_{k}, DB_{k}$ be bounded by a constant $K$ and  admit the same modulus $m$ of continuity. Suppose that there is a uniform Lyapunov function with respect to equations~\eqref{equ.4.7} and~\eqref{equ.4.8}, and $B(x)B^{\intercal}(x), B_{k}(x)B_{k}^{\intercal}(x),$ $x\in U$ are invertible.
Then there exist  unique invariant measures $\mu$ and $\mu_{k}$ of equations \eqref{equ.4.7} and \eqref{equ.4.8}, respectively. Furthermore, if
$$
\lim\limits_{k\rightarrow\infty}\vert A_{k}(x)-A(x)\vert+\vert B_{k}(x)-B(x)\vert=0 \quad \text{point-wise}
$$
and
$$
\lim\limits_{k\rightarrow\infty}\vert DA_{k}(x)-DA(x)\vert+\vert DB_{k}(x)-DB(x)\vert=0\quad \text{point-wise},
$$
then $\mu_{k}$ converges to $\mu$
under the $\text{weak}^{\ast}$-topology and  for any $1\leq i\leq d$,
\begin{center}
$\lim\limits_{k\rightarrow\infty}\lambda_{i,k}=\lambda_{i},$
\end{center}
where $\lambda_{i}$ and $\lambda_{i,k}$ are the Lyapunov exponents corresponding to $\mu$ and $\mu_{k}$ respectively.
\end{theorem}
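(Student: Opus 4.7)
The plan is to assemble Theorem \ref{the.4.3} as a direct corollary of Theorem \ref{the.4.1} and Proposition \ref{pro.4.2}, since the hypothesis sets have been arranged to fit together. First I would check that the regularity hypothesis of Proposition \ref{pro.4.2} is satisfied: since $A,B,A_k,B_k\in C^{1}(\R^{d})$ with Jacobians uniformly bounded by $K$, the mean value theorem gives that all four maps are globally Lipschitz on $\R^{d}$ with constant controlled by $K$, hence in particular locally Lipschitz as required. Combined with the assumed existence of a uniform Lyapunov function and the invertibility of $BB^{\intercal}$ and $B_kB_k^{\intercal}$ on $U$, Proposition \ref{pro.4.2} then yields unique invariant measures $\mu$ and $\mu_k$ for equations \eqref{equ.4.7} and \eqref{equ.4.8}, and the pointwise convergence $A_k\to A$, $B_k\to B$ upgrades uniqueness to weak convergence $\mu_k\to\mu$ in the $\text{weak}^{\ast}$-topology.

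Next I would promote the uniqueness statement to ergodicity, since Theorem \ref{the.4.1} is stated for ergodic measures. This is the standard fact that a unique invariant probability measure of a Markov process is automatically ergodic: if $\mu$ admitted a nontrivial ergodic decomposition, each component would itself be invariant, contradicting uniqueness. The same argument applies to each $\mu_k$, so $\mu$ and $\mu_k$ are ergodic invariant measures of equations \eqref{equ.4.7} and \eqref{equ.4.8} respectively.

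With ergodicity in hand, all the hypotheses of Theorem \ref{the.4.1} are now in force: the coefficients $A,B,A_k,B_k$ lie in $C^{1}(\R^{d})$ with Jacobians bounded by $K$ and sharing the common modulus of continuity $m$; the measures $\mu_k$ converge weakly to $\mu$; and both pointwise convergence conditions \eqref{equ.4.17} and \eqref{equ.4.16} on $(A_k,B_k)$ and their derivatives hold by assumption. Invoking Theorem \ref{the.4.1} concludes $\lim_{k\to\infty}\lambda_{i,k}=\lambda_{i}$ for all $1\leq i\leq d$.

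Since Theorem \ref{the.4.3} is a pure assembly of two previously proved results, there is no genuine analytic obstacle; the only conceptual point requiring a brief comment is the uniqueness$\Rightarrow$ergodicity step, which is folklore but worth recording because Theorem \ref{the.4.1} is stated under the ergodicity hypothesis rather than the uniqueness hypothesis. Everything else is a matter of citing the two prior theorems in the correct order.
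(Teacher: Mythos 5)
Your proposal matches the paper's proof exactly: Theorem \ref{the.4.3} is obtained there precisely by combining Proposition \ref{pro.4.2} (existence, uniqueness, and weak$^{\ast}$ convergence of the invariant measures) with Theorem \ref{the.4.1}. Your explicit remarks that bounded Jacobians give the local Lipschitz hypothesis of Proposition \ref{pro.4.2} and that uniqueness of the invariant measure yields the ergodicity required by Theorem \ref{the.4.1} are correct and in fact slightly more careful than the paper, which leaves both points implicit.
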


\begin{remark}\rm
\begin{itemize}
\item[(1)] The uniform Lyapunov condition and non-degenerate condition are not necessary; other conditions that ensure the existence and uniqueness of invariant measures of equations \eqref{equ.4.7} and \eqref{equ.4.8} can also support Theorem \ref{the.4.3}.
\item[(2)] Consider the following $d$-dimensional SDE
\begin{equation}\label{m-dim}
\d X=A(X)\d t+\sum_{i=1}^{n}B_{i}(X)\d W_{i},\quad\quad n\in\N^{+},
\end{equation}
where $A,B_{i}:\R^{d}\rightarrow\R^{d}, 1\leq i\leq n$ and $\left\{W_{i}\right\}_{0\leq i\leq n}$ are independent Brownian motions. The conclusions  regarding equation \eqref{equ.4.7} in this section apply to equation \eqref{m-dim} as well; the proof is just a modification
of notations.
\end{itemize}
\end{remark}

\section{Lipschitz/H\"older continuity of Lyapunov exponents for autonomous SDEs}\label{sec.5}

In this section, our objective is to establish the Lipschitz/H\"older continuity of all Lyapunov exponents with respect to coefficients for SDEs \eqref{equ.4.7} and \eqref{equ.4.8}. In contrast to Theorem~\ref{the.4.3}, the results stated here require stronger dissipative conditions and a uniform Lipschitz constant for derivatives of coefficients. Numerous studies have delved into the Lipschitz or H\"older continuity of Lyapunov exponents, such as Duarte and Klein~\cite{Duarte-L}, Duarte, Klein and Poletti~\cite{Duarte}, among others. Some of these studies have provided valuable insights and recommendations for our results. For better expression, we define certain norms. For any Borel probability measure $\mu$ defined on $\R^{d}$ and differentiable mappings $\phi:\R^{d}\rightarrow\R^{d}$ with Jacobian  $D\phi:\R^{d}\rightarrow\R^{d\times d}$, the $L^{p}$ norms $(p\geq 2)$ under $\mu$ are defined as
$$
\Vert\phi\Vert_{L^{p}(\mu)}:=\Big(\int_{\R^{d}}\vert\phi(x)\vert^{p}\d\mu(x)\Big)^{\frac{1}{p}}, \quad \Vert D\phi\Vert_{L^{p}(\mu)}:=\Big(\int_{\R^{d}}\vert D\phi(x)\vert^{p}\d\mu(x)\Big)^{\frac{1}{p}}.
$$
Then we extend this to the $L^{p,p}$ norm as
$$
\Vert\phi\Vert_{L^{p,p}(\mu)}:=\Vert\phi\Vert_{L^{p}(\mu)}+\Vert D\phi\Vert_{L^{p}(\mu)}.
$$

Next, we will establish the Lipschitz continuity of solution $\Phi(x,t),t\geq0$ and its derivatives with respect to the coefficients of equations \eqref{equ.4.7} and \eqref{equ.4.8}.

\begin{lemma}\label{lem.5.2}
For SDEs \eqref{equ.4.7} and \eqref{equ.4.8}, assume that coefficients $A,B,A_{k},B_{k}$ belong to $ C^{1,1}(\R^{d})$, $DA,DB,DA_{k},DB_{k}$ are bounded by a constant $K$ and have Lipschitz constants $L,L_{k}$, respectively. Suppose that there exists an  invariant measure $\mu$ of SDE \eqref{equ.4.7}. Then for any $p\geq2$ and $0\leq t\leq T$ with $T\in [0,+\infty)$,
\begin{equation}\label{equ.4.5}
\int_{X}E\vert \Phi_{k}(x,t)-\Phi(x,t)\vert^{p}\d\mu(x)\leq C_{1}\big(\Vert A_{k}-A\Vert^{p}_{L^{p}(\mu)}+\Vert B_{k}-B\Vert^{p}_{L^{p}(\mu)}\big),
\end{equation}
where $C_{1}=C_{1}(p,K,T)>0$ is a constant; and for $1\le l \le d$
\begin{equation}\label{equ.4.6}
\int_{X}E\vert \partial_{l}\Phi_{k}(x)-\partial_{l}\Phi(x)\vert^{2}\d\mu(x)\leq C_{2}\big(\Vert A_{k}-A\Vert^{2}_{L^{p,p}(\mu)}+\Vert B_{k}-B\Vert^{2}_{L^{p,p}(\mu)}\big),
\end{equation}
where $p>2$ and $C_{2}=C_{2}(p,L_{k},K)>0$ is a constant.
\end{lemma}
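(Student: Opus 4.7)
The plan is to prove \eqref{equ.4.5} first and then feed it into the proof of \eqref{equ.4.6}. The central observation in both parts is that $\mu$ is invariant for equation \eqref{equ.4.7}, so for any nonnegative Borel function $f$ on $\R^{d}$ and any $s\ge 0$,
\begin{equation*}
\int_{X} E f(\Phi(x,s)) \, \d\mu(x) = \int_{X} f(x) \, \d\mu(x),
\end{equation*}
which converts integrals of ``coefficient differences evaluated along the unperturbed flow'' into genuine $L^{p}(\mu)$ norms of the differences themselves.

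For \eqref{equ.4.5}, write
\begin{equation*}
\Phi_{k}(x,t)-\Phi(x,t) = \int_{0}^{t}[A_{k}(\Phi_{k})-A(\Phi)]\,\d s + \int_{0}^{t}[B_{k}(\Phi_{k})-B(\Phi)]\,\d W,
\end{equation*}
and telescope each integrand through $A_{k}(\Phi)$ (resp.\ $B_{k}(\Phi)$). The piece $A_{k}(\Phi_{k})-A_{k}(\Phi)$ is bounded by $K\vert\Phi_{k}-\Phi\vert$ since $\vert DA_{k}\vert\le K$; after raising to the $p$-th power, applying the Burkholder--Davis--Gundy inequality and integrating against $\mu$, it becomes the driving term of a Gronwall argument. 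The piece $A_{k}(\Phi)-A(\Phi)$, when integrated in $L^{p}(\P\times\mu)$, equals $\Vert A_{k}-A\Vert_{L^{p}(\mu)}^{p}$ by the invariance identity. Gronwall then yields \eqref{equ.4.5} with $C_{1}=C_{1}(p,K,T)$.

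For \eqref{equ.4.6}, use equation \eqref{equ.4.37} and its analogue for $\partial_{l}\Phi_{k}$ to write
\begin{equation*}
\partial_{l}\Phi_{k}-\partial_{l}\Phi = \int_{0}^{t}[DA_{k}(\Phi_{k})\partial_{l}\Phi_{k}-DA(\Phi)\partial_{l}\Phi]\,\d s + \int_{0}^{t}[DB_{k}(\Phi_{k})\partial_{l}\Phi_{k}-DB(\Phi)\partial_{l}\Phi]\,\d W,
\end{equation*}
and decompose the drift integrand as
\begin{equation*}
DA_{k}(\Phi_{k})[\partial_{l}\Phi_{k}-\partial_{l}\Phi] + [DA_{k}(\Phi_{k})-DA_{k}(\Phi)]\partial_{l}\Phi + [DA_{k}(\Phi)-DA(\Phi)]\partial_{l}\Phi,
\end{equation*}
and similarly for the diffusion. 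The first piece, bounded by $K\vert\partial_{l}\Phi_{k}-\partial_{l}\Phi\vert$, feeds Gronwall. For the second, use $\vert DA_{k}(\Phi_{k})-DA_{k}(\Phi)\vert\le L_{k}\vert\Phi_{k}-\Phi\vert$, then apply H\"older's inequality with exponents $p/2$ and $p/(p-2)$ (equivalently splitting $\partial_{l}\Phi$ in $L^{q}$ with $q=2p/(p-2)$), whose $L^{q}(\P\times\mu)$ norm is controlled by Lemma \ref{lem.4.2}; combining with \eqref{equ.4.5} and the subadditivity inequality $(a^{p}+b^{p})^{2/p}\le a^{2}+b^{2}$ (valid for $p>2$) yields a bound of the form $CL_{k}^{2}(\Vert A_{k}-A\Vert_{L^{p}(\mu)}^{2}+\Vert B_{k}-B\Vert_{L^{p}(\mu)}^{2})$. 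For the third piece, apply the same H\"older split and then invariance of $\mu$ to replace $\int_{X}E\vert DA_{k}(\Phi)-DA(\Phi)\vert^{p}\,\d\mu$ by $\Vert DA_{k}-DA\Vert_{L^{p}(\mu)}^{p}\le\Vert A_{k}-A\Vert_{L^{p,p}(\mu)}^{p}$. Using It\^o's isometry on the stochastic integral and Gronwall on $[0,1]$ closes the estimate.

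The main technical obstacle is the H\"older-exponent bookkeeping in \eqref{equ.4.6}: the choice $q=2p/(p-2)$ forces $p>2$, and the subadditivity inequality $(a^{p}+b^{p})^{2/p}\le a^{2}+b^{2}$ used to recombine the contributions coming from \eqref{equ.4.5} into the desired $L^{p,p}$ bound also requires $p>2$. Absent the strict inequality, one would either pick up a logarithmic loss or be forced to replace the $L^{p,p}(\mu)$ norm on the right-hand side of \eqref{equ.4.6} by a stronger norm.
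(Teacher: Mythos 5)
Your proposal is correct and follows essentially the same route as the paper: the same telescoping of $A_{k}(\Phi_{k})-A(\Phi)$ through $A_{k}(\Phi)$ with Gronwall and the invariance of $\mu$ for \eqref{equ.4.5}, and for \eqref{equ.4.6} the same three-way decomposition of the drift/diffusion integrands, the H\"older split with $\tfrac{2}{p}+\tfrac{2}{q}=1$ against the $L^{q}$ bound on $\partial_{l}\Phi$ from Lemma \ref{lem.4.2}, the Lipschitz bound $L_{k}\vert\Phi_{k}-\Phi\vert$ combined with \eqref{equ.4.5}, and invariance for the $DA_{k}-DA$ piece. The exponent bookkeeping you flag (including the need for $p>2$) matches the paper's.
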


\begin{proof}
\textbf{(1)}
In this part, we prove the inequality \eqref{equ.4.5} for the case when $p=2$. The corresponding $p>2$ follows by analogous reasoning.
Note that for any $0\leq t\leq T$,
$$\begin{aligned}
\int_{X}E\vert \Phi_{k}(x,t)- \Phi(x,t)\vert^{2}\d\mu\leq& 3\int_{X}E\vert\eta_{k}(t)\vert^{2}\d\mu\\
&+(3TK^{2}+3K^{2}) \int_{X}\int_{0}^{t} E\vert \Phi_{k}(x,s)- \Phi(x,s)\vert^{2}\d s\d\mu,
\end{aligned}
$$
where
$$
\begin{aligned}
\eta_{k}(t)=\int_{0}^{t}A_{k}(\Phi(x,s))-A(\Phi(x,s))\d s+\int_{0}^{t}B_{k}(\Phi(x,s))-B(\Phi(x,s))\d W.
\end{aligned}
$$
The Gronwall inequality implies that
\begin{align}\label{equ.5.12}
\int_{X}E\vert \Phi_{k}(x,t)- \Phi(x,t)\vert^{2}\d\mu
\leq 3e^{3T^{2}K^{2}+3TK^{2}} \int_{X}E\vert\eta_{k}(t)\vert^{2}\d\mu.
\end{align}
Next, we have
\begin{align}\label{equ.5.13}
\int_{X}E\vert\eta_{k}(t)\vert^{2}\d\mu
\leq & 2T\int_{X}\int_{0}^{t}E\vert A_{k}(\Phi(x,s))-A(\Phi(x,s))\vert^{2}\d s\d\mu\notag\\
&+2\int_{X}\int_{0}^{t}E\vert B_{k}(\Phi(x,s))-B(\Phi(x,s))\vert^{2}\d s\d\mu\notag\\
=&2T\int_{X}\int_{0}^{t}E\vert A_{k}(x)-A(x)\vert^{2}\d s\d\mu+2\int_{X}\int_{0}^{t}E\vert B_{k}(x)-B(x)\vert^{2}\d s\d\mu\notag\\
\leq&C(T)\big(\Vert A_{k}-A\Vert^{2}_{L^{2}(\mu)}+\Vert B_{k}-B\Vert^{2}_{L^{2}(\mu)}\big)
\end{align}
by the Cauchy-Schwarz inequality, It\^o's  isometry and invariance of measure $\mu$, where $C(T)$ is a constant dependent only on $T$. According to \eqref{equ.5.12} and \eqref{equ.5.13}, we get inequality \eqref{equ.4.5} for $p=2$.

\textbf{(2)} In the second part, we prove the inequality \eqref{equ.4.6}. In Theorem \ref{the.4.1}, the inequality \eqref{equ.4.14} holds. For the first term of \eqref{equ.4.14}, by extending the estimation of \eqref{equ.4.13}, we obtain that for $p>2$
\begin{align}
&\Big( \int_{X}\int_{0}^{1}E\vert  DA_{k}(\Phi_{k}(x,t))-DA_{k}(\Phi(x,t))\vert^{p} \d t\d\mu\Big) ^{\frac{2}{p}}\notag\\
&+\Big( \int_{X}\int_{0}^{1}E\vert DA_{k}(\Phi(x,t))-DA(\Phi(x,t))\vert^{p} \d t\d\mu\Big) ^{\frac{2}{p}}\notag\\
\leq &L_{k}^{2}\Big( \int_{X}\int_{0}^{1}E\vert  \Phi_{k}(x,t)-\Phi(x,t)\vert^{p} \d t\d\mu\Big) ^{\frac{2}{p}}+\Big( \int_{X}\int_{0}^{1}E\vert DA_{k}(x)-DA(x)\vert^{p} \d t\d\mu\Big) ^{\frac{2}{p}}\notag\\
\leq& C(p,L_{k})\big(\Vert A_{k}-A\Vert^{2}_{L^{p,p}(\mu)}+\Vert B_{k}-B\Vert^{2}_{L^{p}(\mu)}\big)\notag
\end{align}
holds by Lipschitz continuity of $DB_{k}$, \eqref{equ.4.5} and invariance of measure $\mu$.
Similarly, for the second term, we have
\begin{align}
&\Big( \int_{X}\int_{0}^{1}E\vert  DB_{k}(\Phi_{k}(x,t))-DB_{k}(\Phi(x,t))\vert^{p} \d t\d\mu\Big) ^{\frac{2}{p}}\notag\\
&+\Big( \int_{X}\int_{0}^{1}E\vert DB_{k}(\Phi(x,t))-DB(\Phi(x,t))\vert^{p} \d t\d\mu\Big) ^{\frac{2}{p}}\notag\\
\leq& C(p,L_{k})\big(\Vert A_{k}-A\Vert^{2}_{L^{p}(\mu)}+\Vert B_{k}-B\Vert^{2}_{L^{p,p}(\mu)}\big).\notag
\end{align}
According to \eqref{equ.4.14}, \eqref{equ.4.13} and
$$
\int_{0}^{1}E\vert\partial_{l}\Phi(x,t)\vert^{q}\d t\leq C(p,K)
$$
by Lemma~\ref{lem.4.2} with $\frac{2}{p}+\frac{2}{q}=1$, we get by H\"older's inequality
$$
\begin{aligned}
\int_{X}E\vert\partial_{l}\Phi_{k}(x)-\partial_{l}\Phi(x)\vert^{2}\d\mu\leq C(p,L_{k},K)\big(\Vert A_{k}-A\Vert^{2}_{L^{p,p}(\mu)}+\Vert B_{k}-B\Vert^{2}_{L^{p,p}(\mu)}\big).
\end{aligned}
$$
Hence, the proof is complete.
\end{proof}

Since the Lipchitz continuity of $\partial_{l}\Phi$ is well-known, we state the result without proof.
\begin{lemma}\label{lem.5.3}
For SDE \eqref{equ.4.7}, assume that coefficients $A,B$ belong to $ C^{1,1}(\R^{d})$,   $DA,DB$ are bounded by a constant $K$ and their Lipschitz constants are bounded by $L$.   Then for any $x,y\in \R^{d}$,
$$
E\vert \partial_{l}\Phi(x)-\partial_{l}\Phi(y)\vert^{2}\leq C \vert x-y\vert^{2},
$$
where $C=C(L,K)>0$ is a constant.
\end{lemma}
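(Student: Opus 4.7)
The plan is to derive a Gronwall-type estimate for $E|\partial_l\Phi(x,t)-\partial_l\Phi(y,t)|^2$ starting from the linear SDE satisfied by $\partial_l\Phi$ in Proposition \ref{lem.2.2}, and then specialize to $t=1$. Writing the difference and telescoping the coefficient-field, I would split
\begin{align*}
DA(\Phi(x,s))\partial_l\Phi(x,s)-DA(\Phi(y,s))\partial_l\Phi(y,s)
&=[DA(\Phi(x,s))-DA(\Phi(y,s))]\partial_l\Phi(x,s)\\
&\quad +DA(\Phi(y,s))[\partial_l\Phi(x,s)-\partial_l\Phi(y,s)],
\end{align*}
and analogously for $DB$. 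The first piece is a ``forcing'' term independent of the unknown difference; the second piece is linear in the difference with bounded coefficient, so it feeds the Gronwall loop.

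Next, using the Cauchy--Schwarz inequality on the drift term and It\^o's isometry on the diffusion term, I would bound
$$
E|\partial_l\Phi(x,t)-\partial_l\Phi(y,t)|^2
\le C(K)\int_0^t E|\partial_l\Phi(x,s)-\partial_l\Phi(y,s)|^2\,\d s
+ C(K)\int_0^t R(s)\,\d s,
$$
where
$$
R(s):=E\big|[DA(\Phi(x,s))-DA(\Phi(y,s))]\partial_l\Phi(x,s)\big|^2
+E\big|[DB(\Phi(x,s))-DB(\Phi(y,s))]\partial_l\Phi(x,s)\big|^2.
$$
The Lipschitz hypothesis on $DA,DB$ gives $R(s)\le L^2\,E\big[|\Phi(x,s)-\Phi(y,s)|^2\,|\partial_l\Phi(x,s)|^2\big]$, which by Cauchy--Schwarz and the $L^4$-bounds of Lemma \ref{lem.4.2} ($E|\Phi(x,s)-\Phi(y,s)|^4\le C|x-y|^4$ and $E|\partial_l\Phi(x,s)|^4\le C$) is dominated by $C(L,K)|x-y|^2$, uniformly in $s\in[0,1]$. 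Applying the integral Gronwall inequality on $[0,1]$ then yields the claimed bound with $C=C(L,K)$.

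The main obstacle, if any, is bookkeeping: one must be careful to use $L^4$ bounds (rather than $L^2$) in Cauchy--Schwarz on the cross term, which requires the full strength of the higher-moment estimate in Lemma \ref{lem.4.2}. Beyond this, everything is standard for linear SDEs with bounded and Lipschitz coefficients, which is why the authors omit the proof as well-known.
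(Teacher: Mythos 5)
Your argument is correct and is precisely the standard Gronwall argument the paper invokes when it omits the proof as "well-known"; indeed the same telescoping decomposition and forcing-term estimate appear in the paper's own derivation of \eqref{equ.4.23} and \eqref{equ.4.14} (following Friedman). The key bookkeeping point you flag --- using the $L^4$ bounds of Lemma \ref{lem.4.2} via Cauchy--Schwarz on the product $|\Phi(x,s)-\Phi(y,s)|^2\,|\partial_l\Phi(x,s)|^2$ --- is exactly right.
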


By the similar method stated by Da Prato and Zabczyk \cite[p. 106]{Prato}, we can get the following result:
\begin{lemma}\label{lem.5.1}
For SDEs \eqref{equ.4.7} and \eqref{equ.4.8}, if the strict monotonicity condition holds, i.e. the coefficients $A,B,A_{k},B_{k}$ satisfy for any $x,y\in\R^{d}$ and some constant $\lambda>0$
\begin{equation}\label{equ.5.8}
2\langle x-y, A(x)-A(y)\rangle +\vert B(x)-B(y)\vert^{2}\leq-\lambda\vert x-y\vert^{2},
\end{equation}
and
\begin{equation}\label{equ.5.30}
2\langle x-y, A_{k}(x)-A_{k}(y)\rangle +\vert B_{k}(x)-B_{k}(y)\vert^{2}\leq-\lambda\vert x-y\vert^{2},
\end{equation}
then solutions $\Phi(x,t)$ and $\Phi_{k}(x,t)$ have the exponentially contraction property, i.e. for any $x,y\in\R^{d}$,
$$
E\vert \Phi(x,t)-\Phi(y,t)\vert^{2} \leq e^{-\lambda t}\vert x-y\vert^{2}, \quad E\vert \Phi_{k}(x,t)-\Phi_{k}(y,t)\vert^{2} \leq e^{-\lambda t}\vert x-y\vert^{2}.
$$
\end{lemma}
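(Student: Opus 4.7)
The plan is to derive the exponential contraction from It\^o's formula applied to the squared distance between the two solution curves, with the strict monotonicity condition providing the key sign control on the It\^o drift. Set $Z(x,y,t):=\Phi(x,t)-\Phi(y,t)$, which satisfies
\[
\d Z=[A(\Phi(x,t))-A(\Phi(y,t))]\d t+[B(\Phi(x,t))-B(\Phi(y,t))]\d W,\qquad Z(0)=x-y.
\]
First I would apply It\^o's formula to the smooth function $z\mapsto |z|^{2}$ to obtain
\[
\d |Z(t)|^{2}=\big\{2\langle Z(t),A(\Phi(x,t))-A(\Phi(y,t))\rangle+|B(\Phi(x,t))-B(\Phi(y,t))|^{2}\big\}\d t+\d M(t),
\]
where $\d M(t)=2\langle Z(t),[B(\Phi(x,t))-B(\Phi(y,t))]\d W\rangle$ is a local martingale.

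Next I would use a localization argument: introduce stopping times $\tau_{n}:=\inf\{t\ge0:|Z(t)|\ge n\}$, so that $M(t\wedge\tau_{n})$ is a true martingale with zero mean, take expectations, and apply strict monotonicity \eqref{equ.5.8} to the bracketed drift term to obtain
\[
E|Z(t\wedge\tau_{n})|^{2}\le |x-y|^{2}-\lambda\int_{0}^{t}E|Z(s\wedge\tau_{n})|^{2}\d s.
\]
Since $DA,DB$ are bounded, standard a priori estimates for \eqref{equ.4.7} guarantee $\sup_{s\le T}E|Z(s)|^{2}<\infty$, so by the dominated convergence theorem I can let $n\to\infty$ and replace $t\wedge\tau_{n}$ with $t$. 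Setting $\varphi(t):=E|Z(t)|^{2}$, this yields the integral inequality $\varphi(t)\le|x-y|^{2}-\lambda\int_{0}^{t}\varphi(s)\d s$. Equivalently, the absolutely continuous function $e^{\lambda t}\varphi(t)$ has non-positive a.e. derivative, so $e^{\lambda t}\varphi(t)\le\varphi(0)=|x-y|^{2}$, which is the desired bound. The argument for $\Phi_{k}$ is identical, using \eqref{equ.5.30} in place of \eqref{equ.5.8}.

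The calculation itself is routine; the only real technical point is the localization argument needed to kill the martingale term, and the verification that the uniform $L^{2}$ bound on $Z$ (coming from boundedness of $DA,DB$) permits passage to the limit. Everything else reduces to the sign of $-\lambda$ appearing in the strict monotonicity condition and the elementary Gronwall-type step $e^{\lambda t}\varphi(t)\le\varphi(0)$. This is exactly the template used in Da Prato--Zabczyk \cite{Prato} and can be followed verbatim.
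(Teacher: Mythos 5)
Your proof is correct and is essentially the argument the paper intends: the paper gives no proof of this lemma, simply citing the method of Da Prato and Zabczyk \cite[p.~106]{Prato}, and your It\^o-formula-plus-localization-plus-Gronwall computation is exactly that standard template, with the strict monotonicity condition supplying the $-\lambda|Z|^{2}$ bound on the drift.
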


\begin{theorem}\label{the.5.1}
For SDEs \eqref{equ.4.7} and \eqref{equ.4.8}, assume that coefficients $A,B,A_{k},B_{k}$ belong to $ C^{1,1}(\R^{d})$, $DA,DB,DA_{k},DB_{k}$ are bounded by a constant $K$ and their Lipschitz constants are bounded by $L$.
If \eqref{equ.5.8} and \eqref{equ.5.30} hold for any $x,y\in\R^{d}$ and some constant $\lambda>0$,
then there exist unique invariant measures $\mu,\mu_{k},k\in\N^{+}$ of equations \eqref{equ.4.7} and \eqref{equ.4.8} respectively and for any $p>2$
\begin{equation}\label{equ.4.29}
\vert\lambda_{i,k}-\lambda_{i}\vert\leq C\big(\Vert A_{k}-A\Vert_{L^{p,p}(\mu)}+\Vert B_{k}-B\Vert_{L^{p,p}(\mu)}\big) \quad \text{for any } 1\leq i \leq d,
\end{equation}
where $C=C(p,L,K)$ is a positive constant, and  $\lambda_{i}$ and $\lambda_{i,k}$ are Lyapunov exponents corresponding to $\mu$ and $\mu_{k}$.
\end{theorem}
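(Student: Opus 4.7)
My plan is to adapt the splitting used in the proof of Theorem \ref{the.4.1} and to quantify each piece using Lemma \ref{lem.5.2} together with a Wasserstein-type bound derived from the exponential contraction of Lemma \ref{lem.5.1}. Throughout I work with partial sums $\Sigma_{l,k} := \sum_{i=1}^l \lambda_{i,k}$ and $\Sigma_l := \sum_{i=1}^l \lambda_i$, using the identity $\Sigma_l = \int_{X\times\Omega}\ln\Vert\wedge^l D\Phi(x,1,\omega)\Vert\,d\P\,d\mu$ from \eqref{equ.2.5} and Remark \ref{rem.2.1}; since $\lambda_i = \Sigma_i - \Sigma_{i-1}$ the desired individual bound \eqref{equ.4.29} will follow from the partial-sum estimate applied at consecutive levels. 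Existence and uniqueness of $\mu$ and $\mu_k$ are standard consequences of the exponential contraction in Lemma \ref{lem.5.1}.

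Set $f(x) := E\ln\Vert\wedge^l D\Phi(x,1,\omega)\Vert$ and $f_k(x) := E\ln\Vert\wedge^l D\Phi_k(x,1,\omega)\Vert$, and write
\begin{align*}
\Sigma_{l,k} - \Sigma_l = \Big(\int_X f_k\,d\mu_k - \int_X f_k\,d\mu\Big) + \int_X (f_k - f)\,d\mu.
\end{align*}
The second piece is handled by exactly the chain of inequalities that produced \eqref{equ.4.9} in Theorem \ref{the.4.1}: one bounds $|f(x) - f_k(x)| \le l\,E\Vert(D\Phi_k - D\Phi)(D\Phi_k)^{-1}\Vert$, applies Cauchy--Schwarz, invokes the moment bound $\int_X E\Vert(D\Phi_k)^{-1}\Vert^2\,d\mu \le C$ from Lemma \ref{lem.4.2}, and then appeals to the key estimate \eqref{equ.4.6} of Lemma \ref{lem.5.2}, obtaining $\big|\int_X (f_k - f)\,d\mu\big| \le C\big(\Vert A_k - A\Vert_{L^{p,p}(\mu)} + \Vert B_k - B\Vert_{L^{p,p}(\mu)}\big)$ for any $p > 2$.

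For the first piece I would first establish a Lipschitz estimate for $f_k$ with constant independent of $k$. Using $|\ln a - \ln b| \le |a - b|(1/a + 1/b)$, the multilinear estimate $\Vert\wedge^l D\Phi_k(x) - \wedge^l D\Phi_k(y)\Vert \le C\big(\Vert D\Phi_k(x)\Vert^{l-1} + \Vert D\Phi_k(y)\Vert^{l-1}\big)\Vert D\Phi_k(x) - D\Phi_k(y)\Vert$, the inequality $\Vert\wedge^l A\Vert^{-1} \le \Vert A^{-1}\Vert^l$ from \eqref{equ.2.6}, Cauchy--Schwarz, the moment bounds of Lemma \ref{lem.4.2}, and Lemma \ref{lem.5.3} applied to $\Phi_k$ (whose coefficients share the uniform constants $K, L$), one gets $|f_k(x) - f_k(y)| \le C|x - y|$ with $C$ independent of $k$. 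Kantorovich duality then gives $\big|\int f_k\,d(\mu_k - \mu)\big| \le C\,W_1(\mu_k, \mu) \le C\,W_p(\mu_k, \mu)$. To bound $W_p(\mu, \mu_k)$ I couple the two flows through the same Brownian motion started at $x \sim \mu$, so that $\Phi(\cdot, T)$ has law $\mu$ while $\Phi_k(\cdot, T)$ has law $\nu_k^T := P_{k,T}^{\ast}\mu$. Lemma \ref{lem.5.2} gives $W_p(\mu, \nu_k^T)^p \le C_1(T)\big(\Vert A_k - A\Vert_{L^p(\mu)}^p + \Vert B_k - B\Vert_{L^p(\mu)}^p\big)$ via \eqref{equ.4.5}, while the $L^p$ version of the contraction of Lemma \ref{lem.5.1} applied to $\Phi_k$ gives $W_p(\nu_k^T, \mu_k) \le e^{-\lambda T/2}\,W_p(\mu, \mu_k)$. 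Picking $T$ large enough that $e^{-\lambda T/2} \le 1/2$ and absorbing the term on the left yields $W_p(\mu, \mu_k) \le C\big(\Vert A_k - A\Vert_{L^p(\mu)} + \Vert B_k - B\Vert_{L^p(\mu)}\big)$.

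The main obstacle I anticipate is precisely this uniform-in-time Wasserstein estimate: the constant $C_1(T)$ in Lemma \ref{lem.5.2} grows in $T$, so one cannot simply send $T \to \infty$; the resolution is to pick one specific finite $T$ for which the contraction ratio is small enough to absorb. A secondary technicality is to lift the $L^2$ contraction of Lemma \ref{lem.5.1} to an $L^p$ version for $p > 2$, which follows from an It\^o computation on $|X_t - Y_t|^p$ under the strict monotonicity hypothesis but should be verified. Once everything is in place, the two pieces combine (using $\Vert\cdot\Vert_{L^p(\mu)} \le \Vert\cdot\Vert_{L^{p,p}(\mu)}$) to give $|\Sigma_{l,k} - \Sigma_l| \le C\big(\Vert A_k - A\Vert_{L^{p,p}(\mu)} + \Vert B_k - B\Vert_{L^{p,p}(\mu)}\big)$, and subtracting this inequality at levels $i$ and $i - 1$ yields \eqref{equ.4.29}.
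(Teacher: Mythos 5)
Your decomposition is exactly the paper's \eqref{equ.5.5}, and your treatment of the second piece $\int_X(f_k-f)\,\d\mu$ coincides with the paper's, via the chain \eqref{equ.4.9} and the estimate \eqref{equ.4.6} of Lemma \ref{lem.5.2}. Where you genuinely diverge is the first piece $\int_X f_k\,\d(\mu_k-\mu)$. The paper does not introduce Wasserstein distances: it inserts the $n$-step transition kernels $P_k(x,n,\cdot)$ and $P(x,n,\cdot)$, lets $n\to\infty$ using the Da Prato--Zabczyk convergence \eqref{equ.5.9}--\eqref{equ.5.10}, and proves by induction/telescoping the key bound \eqref{equ.5.1}, in which each swap of one inner kernel costs a factor $e^{-\lambda(i-1)}$ by the contraction of Lemma \ref{lem.5.1}; summing the geometric series gives \eqref{equ.5.7}. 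Your route --- a uniform-in-$k$ Lipschitz bound on $f_k$ (from Lemmas \ref{lem.5.3} and \ref{lem.4.2}), Kantorovich duality, and the fixed-$T$ absorption argument $W(\mu,\mu_k)\le W(\mu,\nu_k^T)+e^{-\lambda T/2}W(\mu,\mu_k)$ with \eqref{equ.4.5} controlling $W(\mu,\nu_k^T)$ --- is a cleaner, more conceptual packaging of the same two ingredients (finite-time stability plus exponential contraction), and it avoids the triple-integral bookkeeping of the paper's induction. The paper's version, conversely, never needs finiteness of moments of $\mu_k$ or any a priori finiteness of $W(\mu,\mu_k)$, which your absorption step tacitly requires (it holds here since bounded derivatives plus \eqref{equ.5.8} give uniformly bounded second moments of the invariant measures, but it should be said).

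One step as written would fail: the $L^p$ contraction $E|\Phi_k(x,t)-\Phi_k(y,t)|^p\le e^{-c\lambda t}|x-y|^p$ for $p>2$ does \emph{not} follow from \eqref{equ.5.30} alone, because It\^o's formula applied to $|X-Y|^p$ produces the drift $\tfrac{p}{2}|X-Y|^{p-2}\big[2\langle X-Y,A_k(X)-A_k(Y)\rangle+(p-1)|B_k(X)-B_k(Y)|^2\big]$, and the hypothesis only controls the combination with coefficient $1$, not $p-1$. This is harmless for your argument: since $f_k$ is Lipschitz you only need $W_1(\mu_k,\mu)\le W_2(\mu_k,\mu)$, and the $L^2$ contraction of Lemma \ref{lem.5.1} together with \eqref{equ.4.5} at $p=2$ already closes the absorption argument, yielding a bound in $\Vert\cdot\Vert_{L^2(\mu)}\le\Vert\cdot\Vert_{L^{p,p}(\mu)}$. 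So drop the $W_p$ version and run everything with $W_2$; with that correction the proposal is sound.
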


\begin{proof}
By Da Prato and Zabczyk \cite[p. 105]{Prato}, if coefficients of equation~\eqref{equ.4.7} satisfy condition \eqref{equ.5.8}, then there exists a unique invariant Borel probability measure $\mu$. The measure $\mu$ is approximated by the $n$-step transition probability $P(x,n,\cdot)$ of the Markov process $\Phi(x,t), t\in\R^{+}$, i.e. for any $x\in X$ and any Lipschitz continuous bounded function $\phi$,
\begin{equation}\label{equ.5.9}
\lim\limits_{n\rightarrow\infty}\int_{X}\phi(y)P(x,n,\d y)=\int_{X}\phi(y)\d\mu(y).
\end{equation}
Similarly, it follows from condition \eqref{equ.5.30} that for any $k\in\N^{+}$,
\begin{equation}\label{equ.5.10}
\lim\limits_{n\rightarrow\infty}\int_{X}\phi(y)P_{k}(x,n,\d y)=\int_{X}\phi(y)\d\mu_{k}(y),
\end{equation}
where $P_{k}(x,n,\cdot)$ is the $n$-step transition probability of $\Phi_{k}(x,t)$.

Note that
\begin{align}\label{equ.5.5}
\Big\vert\sum_{i=1}^{l}\lambda_{i,k}-\sum_{i=1}^{l}\lambda_{i}\Big\vert=&\Big\vert\int_{X}\int_{\Omega}\ln\Vert \wedge^{l}D\Phi_{k}\Vert\d \P\d\mu_{k}-\int_{X}\int_{\Omega}\ln\Vert \wedge^{l}D\Phi\Vert\d \P\d\mu\Big\vert\notag\\
\leq& \Big	\vert
\int_{X}\int_{\Omega}\ln\Vert \wedge^{l}D\Phi_{k}\Vert\d \P\d\mu_{k}-\int_{X}\int_{\Omega}\ln\Vert \wedge^{l}D\Phi_{k}\Vert\d \P\d\mu\Big\vert\\
&+\Big	\vert\int_{X}\int_{\Omega}\ln\Vert \wedge^{l}D\Phi_{k}\Vert\d \P\d\mu-\int_{X}\int_{\Omega}\ln\Vert \wedge^{l}D\Phi\Vert\d \P\d\mu\Big\vert.\notag
\end{align}
For the first term on the right-hand side of \eqref{equ.5.5}, we have
\begin{align}\label{equ.5.14}
&\Big\vert\int_{X}\int_{\Omega}\ln\Vert \wedge^{l}D\Phi_{k}\Vert\d \P\d\mu_{k}-\int_{X}\int_{\Omega}\ln\Vert \wedge^{l}D\Phi_{k}\Vert\d \P\d\mu\Big\vert\notag\\
\leq& \Big\vert\int_{X}\int_{\Omega}\ln\Vert \wedge^{l}D\Phi_{k}\Vert\d \P\d\mu_{k}-\int_{X}\int_{X}\int_{\Omega}\ln\Vert \wedge^{l}D\Phi_{k}(y)\Vert\d \P P_{k}(x,n,\d y)\d\mu(x)\Big\vert\\
&+\Big\vert\int_{X}\int_{X}\int_{\Omega}\ln\Vert \wedge^{l}D\Phi_{k}(y)\Vert\d \P P_{k}(x,n,\d y)\d\mu(x)\notag\\
&\qquad\qquad\qquad-\int_{X}\int_{X}\int_{\Omega}\ln\Vert \wedge^{l}D\Phi_{k}(s)\Vert\d \P P(x,n,\d s)\d\mu(x)\Big\vert\notag\\
&+\Big\vert\int_{X}\int_{X}\int_{\Omega}\ln\Vert \wedge^{l}D\Phi_{k}(s)\Vert\d \P P(x,n,\d s)\d\mu(x)-\int_{X}\int_{\Omega}\ln\Vert \wedge^{l}D\Phi_{k}\Vert\d \P\d\mu\Big\vert.\notag
\end{align}
By \eqref{equ.5.9}, \eqref{equ.5.10} and the Lebesgue dominated convergence theorem, the first and third terms of the above formula converge to $0$ as $n\rightarrow+\infty$. For the second term, we will prove the following inequality
\begin{align}\label{equ.5.1}
\Big\vert\int_{X }\Big[\int_{X}\int_{\Omega}\ln\Vert \wedge^{l}&D\Phi_{k}(y)\Vert\d \P P_{k}(x,n,\d y)-\int_{X}\int_{\Omega}\ln\Vert \wedge^{l}D\Phi_{k}(s)\Vert\d \P P(x,n,\d s)\Big]\d\mu(x)\Big\vert\notag\\
\leq& C\sum_{i=1}^{n}e^{-\lambda (i-1)}\big(\Vert A_{k}-A\Vert_{L^{2}(\mu)}+\Vert B_{k}-B\Vert_{L^{2}(\mu)}\big).
\end{align}

Firstly, we prove \eqref{equ.5.1} for the case when $n=1$. The following estimate
\begin{align}
\Big\vert\int_{X}\Big[	\int_{X}&\int_{\Omega}\ln\Vert \wedge^{l} D\Phi_{k}(y)\Vert\d \P P_{k}(x,1,\d y)-\int_{X}\int_{\Omega}\ln\Vert\wedge^{l} D\Phi_{k}(s)\Vert\d \P P(x,1,\d s)\Big]\d\mu(x)\Big\vert\notag\\
\leq &C\times \Big[\int_{X} \int_{X\times X}E\vert D\Phi_{k}(y)-D\Phi_{k}(s)\vert^{2} P_{k}(x,1,\d y)\times P(x,1,\d s)\d\mu(x)\Big]^{\frac{1}{2}}\notag\\
&\quad\times \Big( \int_{X}\int_{X}E\Vert (D\Phi_{k}(s))^{-1}\Vert^{2} P(x,1,\d s)\d\mu(x)\Big)^{\frac{1}{2}}\notag\\
\leq&C\Big[\int_{X}\int_{X\times X}\vert y-s\vert^{2} P_{k}(x,1,\d y)\times P(x,1,\d s)\d\mu(x)\Big]^{\frac{1}{2}}\notag\\
=&C\Big[\int_{X}E\vert\Phi_{k}(x)-\Phi(x)\vert^{2}\d\mu(x)\Big]^{\frac{1}{2}}\notag\\
\leq &C\big(\Vert A_{k}-A\Vert_{L^{2}(\mu)}+\Vert B_{k}-B\Vert_{L^{2}(\mu)}\big)\notag
\end{align}
holds by the similar method to that of \eqref{equ.4.9}, Lemmas \ref{lem.5.3}, \ref{lem.4.2} and \ref{lem.5.2}. The case for $n=1$ is valid.

Next, consider \eqref{equ.5.1} for the case when $n=2$. We can get
\begin{align}
&\Big\vert\int_{X}\Big[\int_{X}E(\ln\Vert \wedge^{l}D\Phi_{k}(z)\Vert) P_{k}(x,2,\d z)-\int_{X}E(\ln\Vert \wedge^{l}D\Phi_{k}(t)\Vert) P(x,2,\d t)\Big]\d\mu(x)\Big\vert\notag\\
=&\Big\vert\int_{X}\Big[\int_{X}\int_{X}E(\ln\Vert \wedge^{l}D\Phi_{k}(z)\Vert) P_{k}(y,1,\d z) P_{k}(x,1,\d y)\notag\\
&\qquad-\int_{X}\int_{X}E(\ln\Vert \wedge^{l}D\Phi_{k}(t)\Vert) P(s,1,\d t)P(x,1,\d s)\Big]\d\mu(x)\Big\vert\notag\\
\leq&\Big\vert\int_{X}\Big[\int_{X}\int_{X}E(\ln\Vert \wedge^{l}D\Phi_{k}(z)\Vert) P_{k}(y,1,\d z) P_{k}(x,1,\d y)\label{equ.5.11}\\
&\qquad-\int_{X }\int_{X}E(\ln\Vert \wedge^{l}D\Phi_{k}(t)\Vert) P_{k}(s,1,\d t) P(x,1,\d s)\Big]\d\mu(x)\Big\vert\notag\\
&+\Big\vert\int_{X}\Big[\int_{X}\int_{X}E(\ln\Vert \wedge^{l}D\Phi_{k}(n)\Vert) P_{k}(m,1,\d n) P(x,1,\d m)\notag\\
&\qquad-\int_{X}\int_{X}E(\ln\Vert \wedge^{l}D\Phi_{k}(t)\Vert) P(s,1,\d t) P(x,1,\d s)\Big]\d\mu(x)\Big\vert.\notag
\end{align}
For the first term on the right-hand side of formula \eqref{equ.5.11}, we have
\begin{align}
&\Big\vert\int_{X}\Big[\int_{X}\int_{X}E(\ln\Vert \wedge^{l}D\Phi_{k}(z)\Vert) P_{k}(y,1,\d z)P_{k}(x,1,\d y)\notag\\
&\quad\quad -\int_{X}\int_{X}E(\ln\Vert \wedge^{l}D\Phi_{k}(t)\Vert) P_{k}(s,1,\d t)P(x,1,\d s)\Big]\d\mu(x)\Big\vert\notag\\
\leq&C\times\Big[\int_{X}\int_{X\times X}E\vert D\Phi_{k}(\Phi_{k}(y))-D\Phi_{k}(\Phi_{k}(s))\vert^{2} P_{k}(x,1,\d y)\times P(x,1,\d s)\d\mu(x)\Big]^{\frac{1}{2}}\label{equ.5.3}\notag\\
&\qquad\times \Big[\int_{X}\int_{X}\int_{X}E\Vert (D\Phi_{k}(t))^{-1}\Vert^{2} P_{k}(s,1,\d t) P(x,1,\d s)\d\mu(x)\Big]^{\frac{1}{2}}\notag\\
\leq &C\Big[\int_{X}\int_{X\times X}E\vert \Phi_{k}(y)-\Phi_{k}(s)\vert^{2} P_{k}(x,1,\d y)\times P(x,1,\d s)\d\mu(x)\Big]^{\frac{1}{2}}\notag\\
=&Ce^{-\lambda}\Big[\int_{X}E\vert \Phi_{k}(x)-\Phi(x)\vert^{2}\d\mu(x)\Big]^{\frac{1}{2}}\notag\\
\leq&Ce^{-\lambda}\big(\Vert A_{k}-A\Vert_{L^{2}(\mu)}+\Vert B_{k}-B\Vert_{L^{2}(\mu)}\big)
\end{align}
by the similar method to that of \eqref{equ.4.9}, Lemmas \ref{lem.5.3}, \ref{lem.4.2}, \ref{lem.5.1} and \ref{lem.5.2}. The second term has the following estimate
\begin{align}
&\Big\vert\int_{X}\Big[	\int_{X}\int_{X}E(\ln\Vert \wedge^{l}D\Phi_{k}(n)\Vert) P_{k}(m,1,\d n) P(x,1,\d m)\notag\\
&\qquad -\int_{X}\int_{X}E(\ln\Vert \wedge^{l}D\Phi_{k}(t)\Vert) P(s,1,\d t) P(x,1,\d s)\Big]\d\mu(x)\Big\vert\notag\\
\leq &C\times \Big[\int_{X} \int_{X \times X}E\vert D\Phi_{k}(\Phi_{k}(m))-D\Phi_{k}(\Phi(s))\vert^{2} P(x,1,\d m)\times P(x,1,\d s)\d\mu(x)\Big]^{\frac{1}{2}}\notag\\
&\qquad\times \Big[ \int_{X}\int_{X}\int_{X}E\Vert (D\Phi_{k}(t))^{-1}\Vert^{2} P(s,1,\d t)P(x,1,\d s)\d\mu(x)\Big]^{\frac{1}{2}}\notag\\
\leq& C\Big[\int_{X}\int_{X\times X}E\vert\Phi_{k}(m)-\Phi(s)\vert^{2} P(x,1,\d m)\times P(x,1,\d s)\d\mu(x)\Big]^{\frac{1}{2}}\label{equ.5.2}\notag\\
=&C\Big(\int_{X}E\vert\Phi_{k}(x)-\Phi(x)\vert^{2}\d\mu(x)\Big)^{\frac{1}{2}}\notag\\
\leq& C\big(\Vert A_{k}-A\Vert_{L^{2}(\mu)}+\Vert B_{k}-B\Vert_{L^{2}(\mu)}\big)
\end{align}
by the similar method to that of \eqref{equ.4.9}, Lemmas \ref{lem.5.3} and \ref{lem.4.2}, the invariance of measure $\mu$ and Lemma \ref{lem.5.2}.
According to \eqref{equ.5.3} and \eqref{equ.5.2}, we complete the proof of inequality \eqref{equ.5.1} for the case when $n=2$.
By induction, we can prove that inequality \eqref{equ.5.1} is true for any $n\in\N^{+}$. It follows from \eqref{equ.5.14} that
\begin{align}\label{equ.5.7}
&\Big\vert	\int_{X}\int_{\Omega}\ln\Vert \wedge^{l}D\Phi_{k}\Vert\d \P\d\mu_{k}-\int_{X}\int_{\Omega}\ln\Vert\wedge^{l} D\Phi_{k}\Vert\d \P\d\mu\Big\vert\notag\\
\leq&  C\lim_{n\rightarrow \infty}\sum_{i=1}^{n}e^{-\lambda (i-1)}\big(\Vert A_{k}-A\Vert_{L^{2}(\mu)}+\Vert B_{k}-B\Vert_{L^{2}(\mu)}\big)\notag\\
\leq& C\big(\Vert A_{k}-A\Vert_{L^{2}(\mu)}+\Vert B_{k}-B\Vert_{L^{2}(\mu)}\big).
\end{align}

For the second term of \eqref{equ.5.5}, according to the similar method to that of \eqref{equ.4.9}, Lemmas \ref{lem.4.2} and \ref{lem.5.2}, we know that
\begin{align}\label{equ.5.6}
&\Big\vert\int_{X}E(\ln\Vert \wedge^{l}D\Phi_{k}\Vert-\ln\Vert \wedge^{l}D\Phi\Vert)\d \mu\Big\vert\notag\\
\leq&C\Big ( \int_{X}E\vert D\Phi_{k}-D\Phi\vert^{2}\d\mu\Big)^{\frac{1}{2}}\notag\\
\leq & C\big(\Vert A_{k}-A\Vert_{L^{p,p}(\mu)}+\Vert B_{k}-B\Vert_{L^{p,p}(\mu)}\big).
\end{align}

By \eqref{equ.5.5}, \eqref{equ.5.7} and \eqref{equ.5.6}, the proof is complete.
\end{proof}

\begin{remark}\rm\label{rem.5.1}
\begin{enumerate}
\item[(1)] In Theorem \ref{the.5.1}, we request the monotonicity condition for SDEs \eqref{equ.4.7} and \eqref{equ.4.8} to get the Lipschitz continuity of Lyapunov exponents; this condition can be weakened. Indeed, we only need that the distributions of solutions $\Phi(x,t)$ and $\Phi_{k}(x,t)$
    weakly converge to invariant measures not too slowly, i.e. \eqref{equ.5.9}, \eqref{equ.5.10} and \eqref{equ.5.7} hold.
\item[(2)] Clearly,  it follows from \eqref{equ.4.29} that under the same condition of Theorem \ref{the.5.1}, we have
$$
\vert\lambda_{i,k}-\lambda_{i}\vert\leq C\big( \Vert A_{k}-A\Vert_{C^{1}}+\Vert B_{k}-B\Vert_{C^{1}}\big) \quad 	\text{for any } 1\leq i \leq d.
$$
\item[(3)] By Lemmas \ref{lemA1} and \ref{lemA2} in the appendix, and the similar method to that of Theorem \ref{the.5.1}, it is not difficult to prove the $\alpha$-H$\ddot{\mathrm{o}}$lder continuity ($0<\alpha<1$) of Lyapunov exponents under weaker regularity conditions:

If coefficients $A,B,A_{k},B_{k},k\in \N^{+}$ are $C^{1,\alpha}$ functions,  the H\"older-$\alpha$-seminorms of $DA,DB,DA_{k},DB_{k}$ are
bounded a constant $L$ and the other conditions remain the same, then for any $p>2$
\begin{align}
\qquad\qquad\vert\lambda_{i,k}-\lambda_{i}\vert\leq& C\big( \Vert A_{k}-A\Vert_{L^{2}(\mu)}^{\alpha}+\Vert B_{k}-B\Vert_{L^{2}(\mu)}^{\alpha}\notag\\
&+\Vert DA_{k}-DA\Vert_{L^{p}(\mu)}+\Vert DB_{k}-DB\Vert_{L^{p}(\mu)}\big)\quad \text{for any } 1\leq i\leq d,\notag
\end{align}
where $C=C(\alpha,p,L,K)$ is a positive constant.
\item[(4)] The conclusions drawn in this section regarding equation \eqref{equ.4.7} also apply to equation \eqref{m-dim}.
\end{enumerate}
\end{remark}

Next, by Theorem \ref{the.5.1}, we can establish a local Lipschitz continuity theory for Lyapunov exponents on $C^{1,1}(\R^{d})$. Consider the following two SDEs defined on $\R^{d}$
\begin{equation}\label{equ.4.31}
\d X= A_{1}(X)\d t+B_{1}(X)\d W
\end{equation}
and
\begin{equation}\label{equ.4.32}
\d X= A_{2}(X)\d t+B_{2}(X)\d W,
\end{equation}
where $A_{1},B_{1},A_{2},B_{2}\in C^{1,1}(\R^{d})$. We denote the $i$-th largest Lyapunov exponent of equations \eqref{equ.4.31} and \eqref{equ.4.32} as $\lambda_{i}(A_{1},B_{1})$ and $\lambda_{i}(A_{2},B_{2})$ for any $1\leq i\leq d$, respectively. For any $F\in C^{1,1}(\R^{d})$, we define a new norm
$$
\Vert F\Vert_{C^{1}_{Lip}}:=\Vert F\Vert_{C^{1}}+\sup_{\substack{x,y\in\R^{d} \\ x\neq y}}\dfrac{\vert DF(x)-DF(y)\vert}{\vert x-y\vert}.
$$
Then the following corollary holds.
\begin{coro}
For SDE \eqref{equ.4.7}, if coefficients $A,B\in C^{1,1}(\R^{d})$ satisfy
$$
2\langle x-y, A(x)-A(y)\rangle +\vert B(x)-B(y)\vert^{2}\leq-\lambda\vert x-y\vert^{2},
$$
for any $x,y\in\R^{d}$ with some constant $\lambda>0$ and $\vert DA\vert,\vert DB\vert$ are bounded by a constant $K$, then ther exist two neighborhoods $G_{1}$ of $A$ and $G_{2}$ of $B$ under norm $\Vert \cdot\Vert_{C^{1}_{Lip}}$ such that for any $A_{1},A_{2}\in G_{1}$ and $B_{1},B_{2}\in G_{2}$, equations \eqref{equ.4.31} and \eqref{equ.4.32} have unique invariant measures $\mu,\mu_{k},k\in\N^{+}$ and the corresponding Lyapunov exponents satisfy
$$
\vert \lambda_{i}(A_{1},B_{1})-\lambda_{i}(A_{2},B_{2})\vert\leq C\big( \Vert A_{1}-A_{2}\Vert_{C^{1}}+\Vert B_{1}-B_{2}\Vert_{C^{1}}\big) \quad \text{for any } 1\leq i\leq d,
$$
with some constant $C=C(L,K,G_{1},G_{2})$ dependent on the choices of $G_{1}$ and $G_{2}$.
\end{coro}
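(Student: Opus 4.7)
The plan is to reduce the corollary to a direct application of Theorem \ref{the.5.1} (together with Remark \ref{rem.5.1}(2)) by choosing the neighborhoods $G_1,G_2$ small enough that any pair $(A_1,B_1),(A_2,B_2)\in G_1\times G_2$ still satisfies all hypotheses of that theorem with constants depending only on $A$, $B$, $K$, $\lambda$, and the radii of $G_1, G_2$. Once this uniform control is in place, the estimate follows by rerunning the proof of Theorem \ref{the.5.1} with $(A_1,B_1)$ as the base equation and $(A_2,B_2)$ as the perturbation. I would set
\[
G_1:=\{F\in C^{1,1}(\R^d):\Vert F-A\Vert_{C^1_{Lip}}<r_1\},\quad G_2:=\{F\in C^{1,1}(\R^d):\Vert F-B\Vert_{C^1_{Lip}}<r_2\},
\]
for radii $r_1,r_2>0$ to be chosen. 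Directly from the definition of $\Vert\cdot\Vert_{C^1_{Lip}}$, every $F\in G_1\cup G_2$ satisfies $\Vert F\Vert_{C^1}\le\max(\Vert A\Vert_{C^1},\Vert B\Vert_{C^1})+\max(r_1,r_2)=:K'$, and $DF$ is Lipschitz with a uniform constant $L'$ depending only on $\Vert A\Vert_{C^1_{Lip}}$, $\Vert B\Vert_{C^1_{Lip}}$, $r_1$, $r_2$.

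Next, for $A_i\in G_1,B_i\in G_2$, I would verify that the strict monotonicity \eqref{equ.5.8} is stable: expanding $A_i=A+(A_i-A)$ and $B_i=B+(B_i-B)$ and using $|D(A_i-A)|\le r_1$, $|D(B_i-B)|\le r_2$ together with $|DB|\le K$, one gets
\[
2\langle x-y,A_i(x)-A_i(y)\rangle+\vert B_i(x)-B_i(y)\vert^2\le -(\lambda-2r_1-2Kr_2-r_2^2)\vert x-y\vert^2.
\]
Choosing $r_1,r_2$ so that $2r_1+2Kr_2+r_2^2\le\lambda/2$ yields strict monotonicity uniformly on $G_1\times G_2$ with constant $\lambda':=\lambda/2>0$. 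By the Da Prato-Zabczyk result invoked in the proof of Theorem \ref{the.5.1}, each such SDE then admits a unique invariant measure, in particular $\mu_1$ for \eqref{equ.4.31} and $\mu_2$ for \eqref{equ.4.32}.

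Now I would apply Theorem \ref{the.5.1} with $(A_1,B_1)$ playing the role of $(A,B)$ and $(A_2,B_2)$ the role of $(A_k,B_k)$: the hypotheses hold with the uniform constants $K',L',\lambda'$, and the theorem yields, for any fixed $p>2$, a constant $C=C(p,L',K',\lambda')$ such that
\[
\vert\lambda_i(A_1,B_1)-\lambda_i(A_2,B_2)\vert\le C\bigl(\Vert A_1-A_2\Vert_{L^{p,p}(\mu_1)}+\Vert B_1-B_2\Vert_{L^{p,p}(\mu_1)}\bigr).
\]
Since $\mu_1$ is a probability measure and the differences are $C^1$ functions, one has $\Vert F\Vert_{L^{p,p}(\mu_1)}\le 2\Vert F\Vert_{C^1}$; this is precisely the passage recorded in Remark \ref{rem.5.1}(2), and converts the inequality into the desired $C^1$-Lipschitz form. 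The resulting constant depends only on $L,K,\lambda$ and the choices of $G_1,G_2$, as required.

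The main obstacle is verifying that the constant $C$ from Theorem \ref{the.5.1} can indeed be taken uniformly over the whole neighborhood. This amounts to a careful inspection of that proof to confirm that every intermediate constant (from Lemma \ref{lem.4.2}, Lemma \ref{lem.5.2}, the exponential contraction in Lemma \ref{lem.5.1}, and the Gronwall-type inequality \eqref{equ.5.12}) depends only on $p,K',L',\lambda'$, and not on the specific base equation. The place requiring most attention is the appeal to the convergences \eqref{equ.5.9}-\eqref{equ.5.10}: the geometric rate $e^{-\lambda'(i-1)}$ that makes the sum in \eqref{equ.5.1} convergent is inherited precisely from the uniform contraction constant $\lambda'=\lambda/2$. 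A secondary subtlety is that one cannot simply route through the original $(A,B)$ by triangle inequality, since $\Vert A_1-A\Vert_{C^1}+\Vert A-A_2\Vert_{C^1}$ does not dominate $\Vert A_1-A_2\Vert_{C^1}$; therefore the proof of Theorem \ref{the.5.1} must genuinely be rerun with $(A_1,B_1)$ as the new base equation, which is possible precisely because its hypotheses hold uniformly on $G_1\times G_2$.
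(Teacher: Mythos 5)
Your proposal is correct and follows exactly the route the paper intends: the corollary is stated as an immediate consequence of Theorem \ref{the.5.1}, and your write-up supplies the details the paper leaves implicit, namely the stability of the strict monotonicity condition (your bound $2r_1+2Kr_2+r_2^2\le\lambda/2$ checks out), the uniformity of the constants over $G_1\times G_2$, and the passage from $L^{p,p}(\mu)$ to $C^1$ norms via Remark \ref{rem.5.1}(2). Your observation that one must rerun Theorem \ref{the.5.1} with $(A_1,B_1)$ as the base equation rather than triangulating through $(A,B)$ is also the right reading of why the neighborhoods must be chosen so the hypotheses hold uniformly.
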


\section*{Acknowledgements}
This work is supported by National Key R\&D Program of China (No. 2023YFA1009200), NSFC (Grant 11925102), and Liaoning Revitalization Talents Program (Grant XLYC2202042).

\section*{Data availability}
Data sharing is not applicable to this article as no datasets were generated or analyzed during the current study.

\section*{Appendix}
\renewcommand{\theequation}{A.\arabic{equation}}
\setcounter{equation}{0}  % 重置公式编号为 1
In the appendix we study the properties of derivatives for solution $\Phi(x,t),t\geq 0$ to equation \eqref{equ.4.7} when coefficients $A,B\in C^{1,\alpha}(\R^{d})$ with $0<\alpha<1$.
We establish the H\"older continuity of the derivatives with respect to the coefficients and initial values, which are key to proving the $\alpha$-H\"older continuity of the Lyapunov exponents:
\begin{lemmaA}\label{lemA1}
For SDEs \eqref{equ.4.7} and \eqref{equ.4.8}, assume that coefficients $A,B,A_{k},B_{k}$ belong to $ C^{1,\alpha}(\R^{d})$ $(0<\alpha<1)$, $DA,DB,DA_{k},DB_{k}$ are bounded by a constant $K$ and their H\"older-$\alpha$-seminorms are constants $L,L_{k}$, respectively. Suppose that there exists an  invariant measure $\mu$ of SDE \eqref{equ.4.7}. Then for any $1\le l \le d$ and $p>2$
\begin{align}
\int_{X}E\vert \partial_{l}\Phi_{k}(x)-\partial_{l}\Phi(x)\vert^{2}\d\mu(x)\leq& C\big(\Vert A_{k}-A\Vert^{2\alpha}_{L^{2}(\mu)}+\Vert B_{k}-B\Vert^{2\alpha}_{L^{2}(\mu)}\notag\\
&+\Vert DA_{k}-DA\Vert^{2}_{L^{p}(\mu)}+\Vert DB_{k}-DB\Vert^{2}_{L^{p}(\mu)}\big),\notag
\end{align}
where $C=C(\alpha,p,L_{k},K)>0$ is a constant.
\end{lemmaA}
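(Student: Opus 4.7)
The plan is to mimic the proof of inequality \eqref{equ.4.6} in Lemma \ref{lem.5.2}, replacing the Lipschitz bound on $DA_k, DB_k$ by the H\"older-$\alpha$ bound and carefully tracking the resulting exponents. Starting from equation \eqref{equ.4.37} and its analogue for $\partial_l \Phi_k$, subtracting them and applying Gronwall's inequality exactly as in the derivation of \eqref{equ.4.14} bounds $\int_X E|\partial_l \Phi_k - \partial_l \Phi|^2\,\d\mu$ by a constant times $\int_X E\big|\int_0^1 [DA_k(\Phi_k)-DA(\Phi)]\partial_l\Phi\,\d t\big|^2\,\d\mu$ plus the analogous stochastic-integral term with $DB_k, DB$.

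I would then split $DA_k(\Phi_k)-DA(\Phi)$ as $[DA_k(\Phi_k)-DA_k(\Phi)]+[DA_k(\Phi)-DA(\Phi)]$ and apply H\"older's inequality with exponents $p'/2$ and $q'/2$ exactly as in \eqref{equ.4.13}, choosing an internal exponent $p'\in(2,2/\alpha]$ (nonempty since $0<\alpha<1$). For the first piece, H\"older-$\alpha$ continuity of $DA_k$ gives $|DA_k(\Phi_k)-DA_k(\Phi)|^{p'}\leq L_k^{p'}|\Phi_k-\Phi|^{\alpha p'}$. Since $\alpha p'\leq 2$, Jensen's inequality yields $E|\Phi_k-\Phi|^{\alpha p'}\leq(E|\Phi_k-\Phi|^2)^{\alpha p'/2}$; a second application of Jensen on the probability measure $\mu$ (using $\alpha p'/2\leq 1$), followed by \eqref{equ.4.5} with exponent $2$, produces a bound of the form $C(\|A_k-A\|^2_{L^2(\mu)}+\|B_k-B\|^2_{L^2(\mu)})^{\alpha p'/2}$. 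Raising to the outer power $2/p'$ converts this into $C(\|A_k-A\|^{2\alpha}_{L^2(\mu)}+\|B_k-B\|^{2\alpha}_{L^2(\mu)})$. For the second piece, invariance of $\mu$ reduces the integrand to $\int_X|DA_k(x)-DA(x)|^{p'}\,\d\mu=\|DA_k-DA\|^{p'}_{L^{p'}(\mu)}$, which after the outer power $2/p'$ becomes $\|DA_k-DA\|^2_{L^{p'}(\mu)}$; monotonicity of $L^q$-norms on the probability space $(\R^d,\mu)$ then upgrades this to $\|DA_k-DA\|^2_{L^p(\mu)}$ for the given $p\geq p'$. The terms involving $DB_k, DB$ are handled identically.

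The main technical obstacle is the calibration of the internal exponent $p'$: one needs $p'>2$ to reproduce the H\"older-conjugate pairing $(p'/2,q'/2)$ of the original argument, and simultaneously $p'\leq 2/\alpha$ in order to run Jensen's inequality down to the second moment of $\Phi_k-\Phi$ so that \eqref{equ.4.5} with $p=2$ becomes applicable. The strict H\"older assumption $\alpha<1$ is precisely what makes these two requirements compatible. Once $p'$ is fixed, the remaining ingredients, namely the moment bounds for $\partial_l\Phi$ from Lemma \ref{lem.4.2}, inequality \eqref{equ.4.5}, and invariance of $\mu$, feed in exactly as in Lemma \ref{lem.5.2}, and the stated H\"older-type estimate follows by combining the two pieces above with the $DB$-analogues.
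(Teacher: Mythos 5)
Your proposal is correct and follows essentially the same route as the paper's proof: the same Gronwall reduction to \eqref{equ.4.14}, the same splitting $DA_k(\Phi_k)-DA(\Phi)=[DA_k(\Phi_k)-DA_k(\Phi)]+[DA_k(\Phi)-DA(\Phi)]$, a H\"older step feeding the H\"older-$\alpha$ seminorm into \eqref{equ.4.5} with $p=2$ for the first piece, and invariance of $\mu$ for the second. The only cosmetic difference is that the paper takes the internal exponent to be exactly $2/\alpha$ for the first piece (so that $\alpha p'=2$ and no Jensen step is needed) and uses the given $p$ directly for the second piece, whereas you use a single calibrated $p'\in(2,\min\{p,2/\alpha\}]$ plus Jensen and $L^q$-monotonicity; both land on the same estimate.
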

\begin{proof}
Firstly, under this condition, the inequality \eqref{equ.4.5} holds by the proof given in \textbf{\text{(1)}} of Lemma \ref{lem.5.2}. Next, consider the first term on the right-hand side of \eqref{equ.4.14}. Note that for any $0<\alpha<1$ and $p>2$
\begin{align}
	&\int_{X}	E\Big\vert\int_{0}^{1}[DA_{k}(\Phi_{k}(x,t))-DA(\Phi(x,t))]\partial_{l}\Phi(x,t)\d t\Big\vert^{2}\d\mu\notag \\
	\leq& 2\int_{X}\int_{0}^{1}E\big\vert[DA_{k}(\Phi_{k}(x,t))-DA_{k}(\Phi(x,t))]\partial_{l}\Phi(x,t)\big\vert^{2} \d t\d\mu\notag \\
	&+2\int_{X}\int_{0}^{1}E\big\vert[DA_{k}(\Phi(x,t))-DA(\Phi(x,t))]\partial_{l}\Phi(x,t)\big\vert^{2} \d t\d\mu\notag \\
	\leq& C(\alpha,K)\Big( \int_{X}\int_{0}^{1}E\vert  DA_{k}(\Phi_{k}(x,t))-DA_{k}(\Phi(x,t))\vert^{\frac{2}{\alpha}} \d t\d\mu\Big) ^{\alpha}\notag\\
	&+C(p,K)\Big( \int_{X}\int_{0}^{1}E\vert DA_{k}(\Phi(x,t))-DA(\Phi(x,t))\vert^{p} \d t\d\mu\Big) ^{\frac{2}{p}}\notag
\end{align}
holds by the Cauchy-Schwarz inequality, H\"older's inequality and Lemma \ref{lem.4.2}.
By the similar method to that of \textbf{\text{(2)}} in the proof of Lemma \ref{lem.5.2}, we know that
\begin{align}
&\Big( \int_{X}\int_{0}^{1}E\vert  DA_{k}(\Phi_{k}(x,t))-DA_{k}(\Phi(x,t))\vert^{\frac{2}{\alpha}} \d t\d\mu\Big) ^{\alpha}\notag\\	
&\qquad+\Big( \int_{X}\int_{0}^{1}E\vert DA_{k}(\Phi(x,t))-DA(\Phi(x,t))\vert^{p} \d t\d\mu\Big) ^{\frac{2}{p}}\notag\\
\leq &L_{k}^{2}\Big( \int_{X}\int_{0}^{1}E\vert  \Phi_{k}(x,t)-\Phi(x,t)\vert^{2} \d t\d\mu\Big) ^{\alpha}+\Big( \int_{X}\int_{0}^{1}E\vert DA_{k}(x)-DA(x)\vert^{p} \d t\d\mu\Big) ^{\frac{2}{p}}\notag\\
\leq& C(\alpha,L_{k},K)\big(\Vert A_{k}-A\Vert^{2\alpha}_{L^{2}(\mu)}+\Vert B_{k}-B\Vert^{2\alpha}_{L^{2}(\mu)}+\Vert DA_{k}-DA\Vert^{2}_{L^{p}(\mu)}\big).\notag
\end{align}
Similarly, for the second term of \eqref{equ.4.14}, we have
\begin{align}
&\int_{X}E\Big\vert\int_{0}^{1}[DB_{k}(\Phi_{k}(x,t))-DB(\Phi(x,t))]\partial_{l}\Phi(x,t)\d W\Big\vert^{2}\d\mu\notag\\
\leq& C(\alpha,p,L_{k},K)\big(\Vert A_{k}-A\Vert^{2\alpha}_{L^{2}(\mu)}+\Vert B_{k}-B\Vert^{2\alpha}_{L^{2}(\mu)}+\Vert DB_{k}-DB\Vert^{2}_{L^{p}(\mu)}\big).\notag
\end{align}
Then the proof is complete.
\end{proof}

The proof of the H\"older continuity of $\partial_{l}\Phi$ $(1\leq l\leq d)$ is straightforward, so we omit it.
\begin{lemmaB}\label{lemA2}
For SDE \eqref{equ.4.7}, assume that coefficients $A,B$ belong to $C^{1,\alpha}(\R^{d})$ $(0<\alpha<1)$,   $DA,DB$ are bounded by a constant $K$ and their H\"older-$\alpha$-seminorms are bounded by $L$.   Then for any $x,y\in \R^{d}$
$$
E\vert \partial_{l}\Phi(x)-\partial_{l}\Phi(y)\vert^{2}\leq C \vert x-y\vert^{2\alpha},
$$
where $C=C(\alpha,L,K)>0$ is a constant.
\end{lemmaB}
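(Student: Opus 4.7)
The plan is to derive a variational equation for the difference $\eta_l(t) := \partial_l\Phi(x,t)-\partial_l\Phi(y,t)$ and then close a Gronwall estimate whose forcing term carries the factor $|x-y|^{2\alpha}$. Since $\partial_l\Phi(x,\cdot)$ solves the linear SDE from Proposition \ref{lem.2.2}, subtracting the two versions gives
\[
\eta_l(t) = \int_0^t DA(\Phi(x,s))\eta_l(s)\,\d s + \int_0^t DB(\Phi(x,s))\eta_l(s)\,\d W + R_A(t) + R_B(t),
\]
where the remainders are
\[
R_A(t) = \int_0^t [DA(\Phi(x,s))-DA(\Phi(y,s))]\,\partial_l\Phi(y,s)\,\d s,\qquad R_B(t) = \int_0^t [DB(\Phi(x,s))-DB(\Phi(y,s))]\,\partial_l\Phi(y,s)\,\d W.
\]

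Next, I would bound the remainders. Using boundedness of $DA, DB$ by $K$ and a standard Cauchy--Schwarz and It\^o isometry argument (exactly as in \eqref{equ.4.15}), the drift from the first two terms is absorbed into a Gronwall constant depending only on $K$ and $T=1$. For the forcing, the H\"older-$\alpha$-seminorm bound $L$ yields
\[
|DA(\Phi(x,s))-DA(\Phi(y,s))|^2 \le L^2\,|\Phi(x,s)-\Phi(y,s)|^{2\alpha},
\]
and the same for $DB$. Combining this with Cauchy--Schwarz, It\^o's isometry and the moment estimates of Lemma \ref{lem.4.2} (applied to $|\partial_l\Phi(y,s)|^q$ for a conjugate exponent $q$), one obtains
\[
E|R_A(t)|^2 + E|R_B(t)|^2 \le C(\alpha, L, K)\int_0^t \bigl(E|\Phi(x,s)-\Phi(y,s)|^{2p\alpha}\bigr)^{1/p}\,\d s
\]
for a suitable $p>1$ chosen so that $2p\alpha \ge 2$. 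Invoking Lemma \ref{lem.4.2} for the $2p\alpha$-moment of $\Phi(x,s)-\Phi(y,s)$ (which is bounded by $C|x-y|^{2p\alpha}$), one concludes
\[
E|R_A(t)|^2 + E|R_B(t)|^2 \le C(\alpha, L, K)\,|x-y|^{2\alpha}.
\]

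Finally, taking the $L^2$ norm of $\eta_l(t)$, applying the elementary inequalities $|a+b+c+d|^2 \le 4(|a|^2+|b|^2+|c|^2+|d|^2)$ and It\^o's isometry, yields
\[
E|\eta_l(t)|^2 \le C(\alpha,L,K)\,|x-y|^{2\alpha} + C(K)\int_0^t E|\eta_l(s)|^2\,\d s,
\]
and Gronwall's inequality at $t=1$ gives the desired bound $E|\partial_l\Phi(x)-\partial_l\Phi(y)|^2 \le C\,|x-y|^{2\alpha}$.

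The only delicate step is the choice of exponent $p$ so that the H\"older exponent $2\alpha$ survives the application of H\"older's inequality while keeping the moment of $\partial_l\Phi(y,s)$ controlled by Lemma \ref{lem.4.2}; since that lemma furnishes arbitrary polynomial moments, this is a routine bookkeeping matter rather than a genuine obstacle, so I expect the proof to go through cleanly along these lines.
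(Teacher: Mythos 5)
The paper omits the proof of Lemma A.2 as ``straightforward,'' and your argument is exactly the standard one the authors have in mind: it mirrors their own estimates \eqref{equ.4.23}--\eqref{equ.4.15} and the proof of Lemma A.1, with the variational equation for $\partial_l\Phi(x,\cdot)-\partial_l\Phi(y,\cdot)$, the H\"older-seminorm bound on the forcing, H\"older's inequality with $p\ge 1/\alpha$ so that Lemma \ref{lem.4.2} controls the $2p\alpha$-moment of $\Phi(x,s)-\Phi(y,s)$, and Gronwall. The proposal is correct and complete.
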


\end{document}